\newcommand{\cal}{\mathcal}
\newcommand{\E}{{\bf{E}}}
\newcommand{\Var}{{\bf{Var}}}
\def\II{\text{I\!I}}
\def\P{\hbox{\bf P}}
\def\E{\hbox{\bf E}}
\def\e{\varepsilon }
\def\Var{\hbox{\bf Var}}
\def\C{\cal C}
\def\A{\cal A}
\def\R{\cal R}
\def\D{\cal D}
\def\B{\cal B}
\def\Y{\cal Y}
\def\X{\cal X}
\def\U{\cal U}
\def\V{\cal V}
\def\F{\cal F}
\def\G{\cal G}
\def\MM{{\cal M}}
\def\VV{\mathbb V}
\def\YY{\mathbb Y}
\def\AA{\mathbb A}
\def\WW{\mathbb W}
\def\BB{\mathbb B}
\def\TT{\mathbb T}
\def\FF{\mathbb F}
\def\UU{\mathbb U}
\def\RR{\mathbb R}
\def\SS{{\mathbb S}}
\newtheorem{tm}{Theorem}
\newtheorem{lem}{Lemma}
\newtheorem{col}{Corollary}
\begin{document}

\parindent=0pt

\title[Symmetric Statistics]{Edgeworth  approximations for distributions\\
	of symmetric statistics}
\author[F. G\"otze]{Friedrich G\"otze$^1$}
\address{Faculty of Mathematics, University of Bielefeld, Germany}
\email{goetze@math.uni-bielefeld.de}

\author[M. Bloznelis]{Mindaugas Bloznelis$^1$}
\address{Department of Mathematics, Vilnius University, Lithuania}
\email{mindaugas.bloznelis@mif.vu.lt}
\thanks{$^1$Research supported by CRC 701}

 
\date{September 1, 2005}

\dedicatory{\centerline{In memoriam Willem Rutger van Zwet \; *March 31, 1934 \;\;  $\dagger$July 2, 2020}} 



\begin{abstract} We
	study the distribution of
 a general class of  asymptotically linear
statistics which are symmetric functions of $N$ independent
observations. The distribution functions of these statistics are
approximated by  an Edgeworth expansion with a remainder of order
$o(N^{-1})$. The Edgeworth expansion is based on Hoeffding's
decomposition which provides a stochastic expansion into a
linear part, a quadratic part as well as smaller higher
order parts. The validity of this
Edgeworth expansion is proved under Cram\'er's condition on the
linear part, moment assumptions for all parts of the statistic
and an optimal dimensionality requirement for the non linear part. 
\end{abstract}
\maketitle
\subjclass{1991 {\it Mathematics Subject Classification.} Primary 62E20; Secondary 60F05}
	
\keywords{{\it Key words and phrases.} Edgeworth expansion, Littlewood-Offord problem, concentration in Banach spaces, symmetric statistic, $U$-statistic, Hoeffding decomposition}

\section{Introduction and Results}

{\bf 1.1.} By the classical central limit theorem the distributions of sums
$X_1+\dots+X_N$ of independent and identically distributed random variables
can be approximated by the normal distribution.
The accuracy of the normal
approximation  is of order $O(N^{-1/2})$
by the  well-known
Berry-Esseen theorem.

A function of observations
$X_1,\dots, X_N$ is called {\it linear statistic}
 if it can be
represented by 
a sum of functions depending on a single observation only.
Many important
statistics are non linear, but
 can be
approximated by  a linear statistic.
We call these statistics {\it asymptotically
	linear}.
The central limit theorem and the normal approximation 
with rate $O(N^{-1/2})$ extend to the class of
asymptotically linear statistics as well.
For comparisons of  performance of statistical procedures 
beyond efficiency of first order, that is  $o(N^{-1/2})$, at the level of Hodges and Lehmann deficiency, that is  $o(N^{-1})$,
more precise approximations beyond the normal approximation
are required.
Such a refinement is provided by Edgeworth expansions of  the distribution function. 
The one-term
Edgeworth expansion adds a
 correction term of order $O(N^{-1/2})$   to the
standard normal distribution function
$\Phi(x)=\int_{-\infty}^x(2\pi)^{-1/2}e^{-u^2/2}du$ and provides an
approximation with the error $o(N^{-1/2})$.
Similarly, the two-term Edgeworth expansion includes correction terms
of orders $O(N^{-1/2})$ and $O(N^{-1})$ as well with an approximation error of order $o(N^{-1})$.

Normal approximation theory including Edgeworth expansions is well studied for distribution function
$F_N(x)=\P\{X_1+\dots+X_N\le N\E X_1+N^{1/2}\sigma x\}$, $\sigma^2=\Var X_1$ of a sum of independent random variables,
see Petrov (1975) \cite{Petrov_1975}. Sums of independent random vectors are considered in Bhattacharya and Rao (1986)) \cite{Bhattacharya_Rao_1986}.
One distinguishes two cases. For summands taking values in an
arithmetic progression (lattice case) the distribution function of the sum
  has jumps of
 order $O(N^{-1/2})$.
 Corresponding asymptotic expansions
are
discontinuous functions designed to capture these jumps see
the seminal results by Esseen (1945) \cite{Esseen_1945}.
For sums of non-lattice random variables (non-lattice case) the one-term Edgeworth expansion
$$
\Phi(x)-\frac{1}{\sqrt N}\frac{\kappa_3}{6\sigma^3}(x^2-1)\Phi'(x)
$$
is a differentiable function.
 The
correcting term
reflects the skewness of
the distribution of a summand, $\kappa_3=\E (X_1-\E X_1)^3$.
More generally, a $k-$term Edgeworth expansion is a differentiable function with all derivatives bounded
(it is a sum of Hermite polynomials of increasing order with
scalar weights involve cumulants of order at least three of $X_1$ which vanish for $X_1$ being Gaussian).
Therefore, in order to establish  the validity of such an expansion, that is to prove the bound
$o(N^{-k/2})$ for the remainder, one should assume
that the underlying distribution is sufficiently smooth (see Bickel and Robinson 1982 \cite{Bickel_Robinson_1982}).
A convenient  condition to ensure that is Cram\'er's condition (C):
\begin{displaymath}\nonumber
\limsup_{|t|\to \infty} |\E \exp\{itX_1\}|<1.
\qquad
\qquad (C)
\end{displaymath}

In this paper we establish the two-term Edgeworth expansion for a general asymptotically linear statistic
$T=T(X_1,\dots, X_N)$ with non-lattice distribution.

There is a rich literature devoted
to  normal approximation and Edgeworth expansions
for
various classes of asymptotically linear  statistics
(see e.g. Babu and Bai (1993) \cite{Babu_Bai_1993},
Bai and Rao (1991) \cite{Bai_Rao_1991},
 Bentkus, G\"otze and van Zwet (1997)
 \cite{Bentkus_Goetze_vanZwet_1997}, 
 Bhattacharya and Ghosh (1978, 1980) \cite{Bhattacharya_Ghoshai_1978,Bhattacharya_Ghoshai_1980},
 Bhattacharya and Rao  (1986) \cite{Bhattacharya_Rao_1986}, 
 Bickel (1974) \cite{Bickel_1974},
 Bickel, G\"otze and van Zwet (1986) \cite{Bickel_Goetze_vanZwet_1986},
Callaert, Janssen and Veraverbeke (1980) \cite{Callaert_Janssen_Veraverbeke_1980},
Chibisov (1980) \cite{Chibisov_1980},
Hall (1987) \cite{Hall_1987},
Helmers (1982) \cite{Helmers_1982},
 Petrov (1975) \cite{Petrov_1975}, Pfanzagl (1985) \cite{Pfanzagl_1985}, Serfling (1980) \cite{Serfling_1980}, etc.

A wide class of statistics can be represented as functions of sample means of vector variables.
Edgeworth expansions of such statistics can be obtained by applying
 the multivariate expansion to corresponding functions, see Bhattacharya and Ghosh (1978, 1980) \cite{Bhattacharya_Ghoshai_1978,Bhattacharya_Ghoshai_1980}.
In their work the crucial  Cram\'er condition (C)  is assumed on the joint distribution of all the
components of a vector  which may be too restrictive
in cases where some components have a negligible influence
on the statistic.
More often only one or a few of the components satisfy   a conditional version of condition (C). Bai and Rao (1991) \cite{Bai_Rao_1991},
Babu and Bai (1993) \cite{Babu_Bai_1993} established Edgeworth expansions for functions of sample means under such a conditional
Cram\'er condition. This approach exploits the smoothness of the distribution  of vector as well as
 the smoothness
of the function defining the statistic.
In particular this approach needs a class of statistics which are smooth functions of observations or can
be approximated by such
functions via Taylor's expansion, see also Chibisov (1980) \cite{Chibisov_1980}.

Let us note that the smoothness  of the distribution function of a statistic, say $T=\phi(X_1,\dots, X_N)$
may have little to do  with the smoothness of the kernel $\phi$. Just take Gini's mean difference
 $\sum_{i<j}|X_i-X_j|$ with absolutely continuous $X_i$ for example.
The aim of this paper is to establish the validity of the  two-term Edgeworth
expansion for general asymptotically linear symmetric statistics in the following setup.
We require a minimal Cram\'er type condition for the linear part of the statistic  $T$ (given by its $L^2$ projection
to the space of linear statistics of $X_1, \ldots, X_N$),
and require instead of smoothness of $\phi$  of these
observations that the $L^2$-complementary non linear part of $T$ does not lie in a linear subspace in $L^2$ generated  by the linear part.  

\vskip 0.1cm

{\bf Remark.} {\it Determining in this setup  the actual influence of the non linear terms of $T$
on the approximation error represents a considerable challenge.
The crucial problem, which required  new techniques, was to control the quasi periodic behavior
of upper bounds for the characteristic function of $T$ at frequencies $t \sim N$,
These upper bounds involve  bounds on conditional characteristic functions of the non linear part of $T$,
say $g(t, \mathbf Y)^m$, given a subset $\mathbf Y:=\mathbf X_I$ of size $N-m$
 of the  observations. Understanding the separation of large maxima
of $(t, \mathbf Y )\to  g(t, \mathbf Y)^m$ in terms of both arguments in a function space
setup was finally achieved by  a combinatorial argument of Kleitman on symmetric partitions
(see Section 4)   controlling the concentration of sums like in  the Littlewood-Offord problem
in Banach spaces. 
The separation of large maxima of $g(t, \mathbf Y)^m$ then allowed to prove the
  desired bounds when averaging over $t$ and $\mathbf Y$. Note however, that more
    standard analytical arguments for concentration bounds would not work in the required
    generality here.       
 }

{\bf 2.1.}  Let $X,\, X_1,X_2,\dots, X_N$ be independent and
identically distributed random variables taking values in a
measurable space $(\X,\B)$. Let $P_X$ denotes the distribution of $X$ on $(\X,\B)$.
We assume that $\TT(X_1,\dots, X_N)$ is a symmetric
function of its arguments (symmetric statistic, for short). Furthermore, we assume that
the moments $\E \TT$ and
$\sigma_{\TT}^2:=\Var \TT$ are finite.

Our approach
is based on Hoeffding's
decomposition of $\TT$, see Hoeffding (1948) \cite{Hoeffding_1948}, Efron and Stein (1981) \cite{Efron_Stein_1981}
and
van Zwet (1984) \cite{vanZwet_1984}.
 Hoeffding's decomposition
expands $\TT$ into  the series of centered and mutually uncorrelated $U$-statistics of
increasing order

\begin{align*}
{\mathbb T}
=
\E {\mathbb T} 
&
+ \frac{1}{N^{1/2}} \sum_{1\le i\le N}g(X_i)
+
 \frac{1}{N^{3/2}}\sum_{1\le i<j\le N}\psi(X_i,X_j)
\\
&
 +
 \frac{1}{N^{5/2}}\sum_{1\le i<j<k\le N}\chi(X_i,X_j,X_k) +\dots .
\end{align*}
Let $L, Q$ and $K$ denote the first, the second and the third sum.
We call $L$ the linear part, $Q$ the
quadratic part and $K$ the cubic part
 of the decomposition.

We shall assume that the linear part does not vanish, that is, $\Var L^2>0$.
If, for large $N$,  the linear part dominates the statistic we call $\TT$
asymptotically linear.
The distribution of an asymptotically linear statistic can be
approximated by the normal distribution, via
the central limit theorem.
An improvement over the
normal approximation is obtained by using  Edgeworth
expansions for the
distribution function $\FF(x)=\P\{\TT-\E\TT\le \sigma_{\TT}x\}$. For this purpose we write
Hoeffding's decomposition in the form
\begin{equation}\label{1.1}
\TT-\E \TT=L+Q+K+R,
\end{equation}
 where
$R$ denotes the remainder.
For
a number of important examples of
asymptotically
linear statistics we have
$R/\sigma_{\TT}=o_P(N^{-1})$ (in probability) as $N\to \infty$.
Therefore,  the $U$-statistic $\sigma_{\TT}^{-1}(L+Q+K)$
can be viewed as a stochastic expansion of
$(\TT-\E\TT)/\sigma_{\TT}$ up to the order $o_P(N^{-1})$.
Furthermore, an Edgeworth expansion of $\sigma_{\TT}^{-1}(L+Q+K)$
can be used to approximate $\FF(x)$.

Introduce  the following
 two term Edgeworth expansion of the distribution function of $\sigma_{\TT}^{-1}(L+Q+K)$
\begin{eqnarray}
\label{1.2}
G(x)
&=&
\Phi(x)
-\frac{1}{\sqrt N}\frac{\kappa_3}{6}(x^2-1)\Phi'(x)
\\
\nonumber
&
-
&\frac{1}{N}
\Bigl(
\frac{\kappa_3^2}{72}(x^5-10x^3+15x)\Phi'(x)+\frac{\kappa_4}{24}(x^3-3x)\Phi'(x)
\Bigr).
\end{eqnarray}
Here $\Phi$ respectively $\Phi'$ denote the standard normal distribution
function and its derivative. Furthermore, we introduce $\sigma^2=\E g^2(X_1)$ and
\begin{align*}
\kappa_3
&
=
\sigma^{-3}\Bigl(\E g^3(X_1)+3\E g(X_1)g(X_2)\psi(X_1, X_2)\Bigr),
\\
\kappa_4
&
=\sigma^{-4}\Bigl(\E g^4(X_1)-3\sigma^4+12\E g^2(X_1)g(X_2)\psi(X_1,X_2)
\\
&
+12\E g(X_1)g(X_2)\psi(X_1,X_3)\psi(X_2,X_3)
\\
&
+4\E g(X_1)g(X_2)g(X_3)\chi(X_1,X_2,X_3)
\Bigr).
\end{align*}
Our main result, Theorem 1 below, establishes a bound
$o(N^{-1})$ for the Kolmogorov distance
$$
\Delta=\sup_{x\in \mathbb R}|\FF(x)-G(x)|.
$$

We shall consider a general situation where the kernel
$\TT=\TT^{(N)}$, the  space $(\X,\B)=(\X^{(N)},\B^{(N)})$ and the distribution
$P_X=P_X^{(N)}$  all depend on $N$ as $N\to \infty$. In order to
keep the notation simple we drop the subscript $N$ in what follows.

\bigskip

{\bf 3.1.} Let us introduce the conditions  we need in order to prove  the bound
$\Delta=o(N^{-1})$.

\bigskip

(i) {\it Moment conditions.} Assume that, for some absolute
constants $0<A_*<1$ and $M_*>0$ and numbers $r>4$ and $s>2$, we
have
\begin{eqnarray}\label{1.3}
&&
\E g^2(X_1)>A_*\sigma_\TT^2,
\quad
\E |g(X_1)|^r<M_*\sigma_\TT^r,
\\
\nonumber
&&
\E|\psi(X_1,X_2)|^r<M_*\sigma_\TT^r,
\quad
\E |\chi(X_1,X_2,X_3)|^s<M_*\sigma_\TT^s.
\end{eqnarray}

These moment conditions  refer to  the linear, the quadratic and the cubic part of $\TT$.
In order to control the remainder $R$ of the approximation (\ref{1.1}) we use moments of differences
introduced in Bentkus, G\"otze and van Zwet (1997) \cite{Bentkus_Goetze_vanZwet_1997},  
see also van Zwet (1984) \cite{vanZwet_1984} .
Define, for $1\le i\le N$,
$$
D_i\TT=\TT-\E_i\TT,
\qquad
\E_i\TT:=\E(\TT|X_1,\dots, X_{i-1},X_{i+1},\dots, X_N).
$$
A subsequent application of difference operations $D_i$, $D_j$, $\dots$,
(the indices  $i,j$, $\dots$, are all distinct) produce  higher
order  differences, like
$$
D_iD_j\TT:=D_i(D_j\TT)=\TT-\E_i\TT-\E_j\TT+\E_i\E_j\TT.
$$
For $m=1,2,3,4$ write $\Delta_m^2=\E|N^{m-1/2}D_1D_2\cdots D_m \TT|^2$.

We shall assume that for some absolute constant $D_*>0$ and number $\nu_1\in (0,1/2)$
we have
\begin{equation}\label{1.4}
\Delta_4^2/\sigma_\TT^2\le N^{1-2\nu_1}D_*
\end{equation}
For a number of important examples of asymptotically linear
statistics the moments $\Delta_m^2$ are evaluated or estimated in
\cite{Bentkus_Goetze_vanZwet_1997}. Typically we have
$\Delta_m^2/\sigma_\TT^2=O(1)$ for some $m$.
Therefore, assuming that (\ref{1.4}) holds uniformly in $N$ as $N\to \infty$,
we obtain from the inequality $\E R^2\le N^{-3}\Delta_4^2$,
see (\ref{A1.2}) (see Appendix), that $R/\sigma_\TT=O_P(N^{-1-\nu_1})$.
Furthermore, assuming that (\ref{1.3}), (\ref{1.4}) hold uniformly in $N$ as $N\to \infty$,
we obtain from (\ref{A1.2}), (\ref{A1.1}), see Appendix, that
$\sigma^2/\sigma_\TT^2=(1-O(N^{-1}))$.

(ii) {\it Cram\'er type smoothness condition.} Introduce the
function
$$
\rho(a,b)=1-\sup\{|\E\exp\{itg(X_1)/\sigma\}|: \, a\le |t|\le b
\}.
$$

We shall assume that, for some $\delta >0$ and $\nu_2>0$, we have
\begin{equation}\label{1.5}
\rho(\beta_3^{-1}, N^{\nu_2+1/2})
\ge
\delta.
\end{equation}
Here  $\beta_3=\sigma^{-3}\E |g(X_1)|^3$.

It was shown in G\"otze and van Zwet (1992) \cite{Goetze_vanZwet_1992}, see as well Theorem
1.4 of \cite{Bentkus_Goetze_vanZwet_1997}, that  moment conditions (like (\ref{1.3}) and (\ref{1.4}))
together with Cram\'er's condition (on  the summand  $g(X_i)$ of
the linear part) do not suffice to establish the bound
$\Delta=o(N^{-1})$. For convenience we state this result in
Example 1 below.
\bigskip

{\bf Example 1.} Let $X_1,X_2,\dots$ be independent random variables
 uniformly distributed on the interval $(-1/2,1/2)$. Define
$T_N=(W_N+N^{-1/2}V_N)(1-N^{-1/2}V_N)$,
where $V_N=N^{-1/2}\sum \{N^{1/2}X_j\}$ and $W_N=N^{-1}\sum [N^{1/2}X_j]$.
Here $[x]$ denotes the nearest integer to $x$ and $\{x\}=x-[x]$.

Assume that  $N=m^2$, where $m$ is odd. We have, by the local limit theorem,
$$
\P\{W_N=1\}\ge cN^{-1}
\qquad
{\text{ and}}
\qquad
\P\{|V_N|<\delta\}>c\delta,
\qquad 0<\delta<1,
$$
 where $c>0$ is  an absolute constant. From these inequalities it follows
by the independence of $W_N$ and $V_N$, that $\P\{1-\delta^2
N^{-1}\le T_N\le 1\}\ge c^2\delta N^{-1}$.

\medskip

The example defines a sequence of $U$-statistics
$\TT_N$ whose distribution functions $\FF_N$ have $O(N^{-1})$ sized
increments in a particular interval of length $o(N^{-1})$. These
fluctuations of magnitude $O(N^{-1})$ appear as a result of a
nearly lattice structure induced by the interplay between the
(smooth) linear part and the quadratic part.
 In order to avoid examples with such a (conditional) lattice structure
 a simple moment condition was introduced in G\"otze and van Zwet (1992)
 \cite{Goetze_vanZwet_1992}
 which, separates (in $L^2$ distance)
 the random variable
 $\psi(X_1,X_2)$
 from  any random variable of the form
 $\psi_h(X_1,X_2)=h(X_1)g(X_2)+g(X_1)h(X_2)$, $h-$measurable.
Note that the $L^2$ distance $\E(\psi(X_1,X_2)-\psi_h(X_1,X_2))^2$
is minimized by $h(x)=b(x)$, where
$$
b(x)=\sigma^{-2}\E\bigl(\psi(X_1,X_2)g(X_2)\bigr|X_1=x\bigr)-(\kappa/2\sigma^4)g(x).
$$
Here $\kappa=\E\psi(X_1,X_2)g(X_1)g(X_2)$.

Therefore, we assume that, for some absolute constant $\delta_{*}>0$, we have
\begin{equation}\label{1.6}
\E\Bigl(\psi(X_1,X_2)-\bigl(b(X_1)g(X_2)+b(X_2)g(X_1)\bigr)\Bigr)^2\ge
\delta_{*}^2\sigma_\TT^2.
\end{equation}

Define $\nu=600^{-1}\min\{\nu_1,\nu_2,s-2,r-4,\}$.

\begin{tm}\label{Theorem 1} Let $N\ge 4$. Assume that for some absolute constants
$A_*,M_*,D_*>0$ and numbers $r>4, s>2$, $\nu_1,\nu_2>0$ and $\delta,\delta_{*}>0$,
the conditions (\ref{1.3}), (\ref{1.4}), (\ref{1.5}), (\ref{1.6})  hold.
 Then there exists a constant $C_*>0$
 depending only
on $A_*$, $M_*$, $D_*$, $r$, $s$, $\nu_1,\nu_2,\delta, \delta_{*}$
such that
$$
\Delta\le C_*N^{-1-\nu}\bigl(1+\delta_{*}^{-1}N^{-\nu}\bigr).
$$
\end{tm}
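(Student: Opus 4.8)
The plan is to follow the classical Fourier-analytic route: bound the Kolmogorov distance $\Delta$ by an Esseen-type smoothing inequality and then estimate the difference $\widehat{\FF}(t)-\widehat{G}(t)$ of characteristic functions over a range $|t|\le TN$ for a suitable power $T=N^{\nu_0}$, splitting this range into a ``central'' regime $|t|\ll N^{1/2}$ where a direct stochastic/Taylor expansion of $\E\exp\{it\sigma_{\TT}^{-1}(L+Q+K)\}$ around the Gaussian characteristic function yields the Edgeworth terms with error $o(N^{-1})$, and a ``large-$t$'' regime $N^{1/2}\lesssim|t|\lesssim N^{1+\nu_2}$ where one must show that $|\widehat{\FF}(t)|$ is negligible. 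First I would replace $\TT-\E\TT$ by $L+Q+K$ using the remainder bound $\E R^2\le N^{-3}\Delta_4^2$ from \eqref{1.4} and \eqref{A1.2}, which costs only $O(N^{-1-\nu_1})$ in the Kolmogorov distance after a standard truncation argument, and likewise absorb $\sigma^2/\sigma_{\TT}^2=1-O(N^{-1})$. In the central regime I would condition on $X_1,\dots$ appropriately and expand $Q$ and $K$ as perturbations of the linear part $L$; the cumulant expansion of the conditional characteristic function of $L$ (smooth by Cram\'er's condition \eqref{1.5}) together with the moment bounds \eqref{1.3} produces exactly the polynomials appearing in $G$, with the cross-moments $\E g\,g\,\psi$, $\E g\,g\,\psi\psi$ and $\E g\,g\,g\,\chi$ feeding into $\kappa_3,\kappa_4$.

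The core difficulty, and where essentially all the work lies, is the large-$t$ regime: controlling $|\widehat{\FF}(t)|=|\E\exp\{it\sigma_{\TT}^{-1}(L+Q+K)\}|$ for $t$ up to order $N^{1+\nu_2}$. Here the linear part alone gives, via Cram\'er's condition, only a bound valid for $|t|\le N^{\nu_2+1/2}$ (this is the content of \eqref{1.5} and $\rho(\beta_3^{-1},N^{\nu_2+1/2})\ge\delta$), so for larger frequencies one must exploit the \emph{quadratic} part $Q$. The strategy, following G\"otze--van Zwet, is a symmetrization/conditioning device: fix a block of indices $I$ of size $N-m$ with $\mathbf Y=\mathbf X_I$ and write the conditional characteristic function over the remaining $m$ coordinates as $g(t,\mathbf Y)^m$ up to lower-order corrections, where $g(t,\mathbf Y)$ is a single-coordinate conditional characteristic function whose phase is essentially linear-plus-$\frac{t}{N}\,(\text{bilinear form in }\mathbf Y)$. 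One then needs $|g(t,\mathbf Y)|\le 1-\eta$ on a large set of $(t,\mathbf Y)$. Condition \eqref{1.6}, which forces $\psi$ to stay $L^2$-away from every degenerate kernel $b(X_1)g(X_2)+b(X_2)g(X_1)$, is precisely what guarantees that the bilinear perturbation is genuinely two-dimensional and cannot be ``absorbed'' into a shift of the linear frequency; it rules out Example 1.

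The genuinely new ingredient — advertised in the Remark — is the control of the \emph{set} where $|g(t,\mathbf Y)|$ is close to $1$. For fixed $\mathbf Y$ the function $t\mapsto|g(t,\mathbf Y)|$ can have large values near a quasi-lattice of frequencies determined by $\mathbf Y$, and one cannot afford a crude supremum; instead one must show these near-maxima are well separated in $t$ and that, as $\mathbf Y$ varies, the corresponding ``bad'' $t$-sets overlap little. This is reduced to a concentration estimate of Littlewood--Offord type in a Banach space: bounding the probability that a random sum $\sum_j \varepsilon_j v_j(\mathbf Y)$ of vector-valued summands falls in a small ball, where the $v_j$ encode the dependence of the phase on the coordinates. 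The separation is obtained through Kleitman's theorem on symmetric chain decompositions / antichains applied to the relevant partition structure (Section 4). Once this separation is in hand, integrating $|\widehat{\FF}(t)|$ against $dt$ over the large-$t$ range and taking expectation over $\mathbf Y$ gives a bound $o(N^{-1})$, and combining the two regimes via the smoothing inequality yields $\Delta\le C_*N^{-1-\nu}(1+\delta_{*}^{-1}N^{-\nu})$ with $\nu=600^{-1}\min\{\nu_1,\nu_2,s-2,r-4\}$, the loss of constants and the factor $600$ coming from the many interpolation and Hölder steps. The main obstacle, to be explicit, is this combinatorial-analytic separation-of-maxima argument; the central expansion and the remainder bookkeeping, while lengthy, are routine by comparison.
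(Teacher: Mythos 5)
Your proposal follows essentially the same route as the paper: an Esseen/Bhattacharya--Rao smoothing inequality, a linearization via Hoeffding's decomposition with conditioning on a block $\mathbf Y$ of size $N-m$, Taylor/cumulant expansions producing $\kappa_3,\kappa_4$ for small frequencies, Cram\'er's condition \eqref{1.5} for medium frequencies, and condition \eqref{1.6} combined with Kleitman's theorem (a Littlewood--Offord type concentration bound in $L^r$) to separate the near-maxima of the conditional characteristic function $g(t,\mathbf Y)^m$ at frequencies $|t|\sim N$. This is exactly the architecture of Sections 2--5 of the paper, so the sketch is consistent with the actual proof.
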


In particular case of $U$ statistics of degree three (the case where $R\equiv 0$ in (\ref{1.1}))
the proof of Theorem 1 has been outlined in an unpublished paper by G\"otze and van Zwet (1992)
\cite{Goetze_vanZwet_1992}.
 We provide a complete and more readable version of the arguments sketched in that preprint and
 extend them to a general class of symmetric statistics.
\vskip 0.1cm
{\bf Remark 1.} Condition (\ref{1.6}) can be relaxed.  Assume that
for some absolute constant $G_*$ we have
\begin{equation}\label{1.7}
\E\Bigl(\psi(X_1,X_2)-\bigl(b(X_1)g(X_2)+b(X_2)g(X_1)\bigr)\Bigr)^2\ge
N^{-2\nu} G_*\sigma_\TT^2.
\end{equation}
The bound of Theorem 1 holds if we replace (\ref{1.6}) by this weaker
condition. In this case we have $\Delta\le C_*N^{-1-\nu}$,
where the constant $C_*$ depends on $A_*,D_*, G_*$,
$M_*,r,s,\nu_1,\nu_2, \delta$.
\vskip 0.1cm
{\bf Remark 2.}
Consider a sequence of statistics $T^{(N)}=T^{(N)}(X_{N1},\dots,
X_{NN})$ based on independent observations $X_{N1},\dots,
X_{NN}$ taking values in
$(\X^{(N)},\B^{(N)})$ and with the common distribution
$P_X^{(N)}$. Assume that conditions (\ref{1.3}), (\ref{1.4}) and (\ref{1.6}) (or
(\ref{1.7})) hold uniformly in $N=N_0,N_0+1,\dots$, for some  $N_0$.
Theorem 1 implies the bound $\Delta=o(N^{-1})$ as $N\to \infty$.
\vskip 0.1cm
{\bf Remark 3.}
 The value of $\nu=600^{-1}\min\{\nu_1,\nu_2,s-2,r-4,\}$ is far from
being optimal.
 Furthermore, the
 moment conditions (\ref{1.3}) and (\ref{1.4}) are not the weakest possible that
would ensure the approximation of order $o(N^{-1})$. The condition (\ref{1.3}) can likely
be reduced to the moment conditions that are necessary to define
 Edgeworth expansion terms $\kappa_3$ and $\kappa_4$, similarly,  (\ref{1.4}) can be reduced to
$\Delta_4^2/\sigma_\TT^2= o(N^{-1})$.
 No effort was made to obtain the result under the optimal conditions.
  This would increase
 the complexity of the proof which is already
 rather involved.

\bigskip
{\bf 4.1.} In order to compare Theorem 1 with earlier results of similar nature let
us consider the  case of $U$-statistics of degree
two
\begin{equation}\label{1.8}
\UU=\frac{\sqrt N}{2} \binom{N}{2}^{-1}\sum_{1\le i<j\le N}h(X_i,X_j),
\end{equation}
 where $h(\cdot,\cdot)$ denotes a (fixed)
symmetric kernel. Assume for simplicity of notation and without loss of
generality
that $\E h(X_1,X_2)=0$. Write $h_1(x)=\E(h(X_1,X_2)|X_1=x)$ and assume that
$\sigma_h^2>0$, where $\sigma_h^2=\E h_1^2(X_1)$.
In this case Hoeffding's decomposition (\ref{1.1}) reduces
to $\UU=L+Q$, where, by the assumption $\sigma_h^2>0$, we have $\Var L>0$.
 Since the cubic part vanishes
 we may remove the moment $\E g(X_1)g(X_2)g(X_3)\chi(X_1,X_2,X_3)$ from the
expression for $\kappa_4$. In this way we obtain the two term
Edgeworth expansion (\ref{1.2}) for the distribution function
$\FF_U(x)=\P\{\UU\le \sigma_\UU x\}$ with
$\sigma^2_\UU:=\Var \UU$.

We call a kernel $h$ reducible if
 for some measurable functions
$u,v:\X\to R$ we have $h(x,y)-\E h(X_1,X_2)=v(x)u(y)+v(y)u(x)$
for $P_X\times P_X$
almost sure $(x,y)\in \X\times \X$. A simple calculation shows that for a
sequence of $U$-statistics (\ref{1.8}) with a fixed non-reducible kernel
the condition
(\ref{1.6}) is satisfied, for some $\delta_{*}>0$, uniformly in $N$.
A straightforward consequence of Theorem 1 is the following
corollary. Write ${\tilde \nu}=600^{-1}\min\{\nu_2,r-4,1\}$.

\begin{col}\label{ Corollary 1} Let $N\ge 4$.
Assume that for some $r>4$
\begin{equation}\label{1.9}
\E |h(X_1,X_2)|^r<\infty.
\end{equation}
Assume that $\sigma_h^2>0$ and the kernel $h$ is non-reducible and that for some $\delta>0$
\begin{equation}\label{1.10}
\sup\{|\E e^{it\sigma_h^{-1}h_1(X_1)}|:\, |t|\ge \beta_3^{-1}\}
\le
1-\delta.
\end{equation}
 Then there exist a constant $C_*>0$
such that
$$
\sup_{x\in \RR}|\FF_U(x)-G(x)|\le C_*N^{-1-{\tilde \nu}}
$$
\end{col}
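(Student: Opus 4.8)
The strategy is to derive Corollary~1 as a direct specialization of Theorem~1 to the $U$-statistic $\UU$ of degree two in \eqref{1.8}, checking that the hypotheses of Theorem~1 are all implied by \eqref{1.9}, \eqref{1.10}, non-reducibility and $\sigma_h^2>0$. First I would record the Hoeffding decomposition in this case: since $h$ is a fixed kernel of degree two, $\chi\equiv 0$ and $R\equiv 0$, so $\UU=L+Q$ with $g(x)=h_1(x)=\E(h(X_1,X_2)\mid X_1=x)$ and $\psi(x,y)=h(x,y)-h_1(x)-h_1(y)$ (up to the normalizing constants already built into \eqref{1.1}). The assumption $\sigma_h^2>0$ gives $\Var L>0$, and since the kernel is fixed in $N$ we also have $\sigma^2/\sigma_\TT^2\to 1$; this immediately yields the moment condition \eqref{1.3} with $s$ arbitrary (the $\chi$-part is absent, so one may pick any $s>2$) and with $r$ as in \eqref{1.9}, because $\E|g(X_1)|^r\le\E|h(X_1,X_2)|^r<\infty$ by Jensen and $\E|\psi(X_1,X_2)|^r\lesssim\E|h(X_1,X_2)|^r<\infty$ by the $c_r$-inequality; the constants $A_*,M_*$ can be taken uniform in $N$ because the kernel does not change with $N$.

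Next I would verify \eqref{1.4}. For a fixed kernel of degree two one computes $D_1D_2D_3D_4\UU\equiv 0$ (the fourth-order difference of a degree-two $U$-statistic vanishes identically), so $\Delta_4^2=0$ and \eqref{1.4} holds trivially with, say, $\nu_1=1/2-\epsilon$ for any small $\epsilon$; in the definition $\nu=600^{-1}\min\{\nu_1,\nu_2,s-2,r-4\}$ the value of $\nu_1$ and $s-2$ can therefore be pushed up to $1$, which is why the exponent in the corollary reads $\tilde\nu=600^{-1}\min\{\nu_2,r-4,1\}$. The Cram\'er-type condition \eqref{1.5} is exactly \eqref{1.10}: since $g=h_1$ and $\sigma^2\sim\sigma_h^2$, the hypothesis $\sup\{|\E e^{it\sigma_h^{-1}h_1(X_1)}|:|t|\ge\beta_3^{-1}\}\le 1-\delta$ gives $\rho(\beta_3^{-1},b)\ge\delta$ for every $b$, in particular for $b=N^{\nu_2+1/2}$ with $\nu_2=1$ (any positive $\nu_2$ suffices, and again we may take $\nu_2=1$ to match $\tilde\nu$). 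Finally I would invoke the calculation already announced in the text: for a non-reducible kernel the separation condition \eqref{1.6} holds with some $\delta_*>0$ uniform in $N$. Here the point is that $h(x,y)-\E h=v(x)u(y)+v(y)u(x)$ is impossible; one shows that if \eqref{1.6} failed along a subsequence then $\psi$ would be $L^2$-approximable by kernels of the form $b(x)g(y)+b(y)g(x)$, and combined with $\psi=h-h_1\otimes 1-1\otimes h_1$ this would force $h-\E h$ itself to be of the rank-one-plus-symmetric form $v(x)u(y)+v(y)u(x)$, contradicting non-reducibility; since the kernel is fixed, one $\delta_*>0$ works for all $N$.

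With all four hypotheses \eqref{1.3}--\eqref{1.6} verified, Theorem~1 applies and gives $\sup_x|\FF_U(x)-G(x)|\le C_*N^{-1-\nu}(1+\delta_*^{-1}N^{-\nu})\le 2C_*\delta_*^{-1}N^{-1-\nu}$; absorbing $\delta_*^{-1}$ and the other fixed constants into $C_*$ and noting $\nu=\tilde\nu$ under the improved choices above yields the stated bound $C_*N^{-1-\tilde\nu}$. The one genuinely nontrivial step is the deduction of \eqref{1.6} from non-reducibility with a constant $\delta_*$ that does not depend on $N$; because the kernel $h$ is held fixed this is really a single $L^2$-geometry statement about $P_X$, but making the quantitative claim $\delta_*>0$ (rather than merely nonzero) requires observing that the quantity on the left of \eqref{1.6}, after normalizing by $\sigma_\TT^2$, is a fixed positive number determined by $h$ and $P_X$ alone. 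I expect that identification of $b$, and the verification that the minimizing $h$ in the remark indeed equals $b$, to be the only place where a short explicit computation is needed; everything else is bookkeeping of which parameters in $\nu$ may be set to $1$ because the higher-order parts of $\UU$ vanish.
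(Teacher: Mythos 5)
Your proposal is correct and follows essentially the same route as the paper: Corollary~1 is obtained by specializing Theorem~1 to the degree-two $U$-statistic, noting that the cubic part and the remainder vanish (so (\ref{1.3}) holds with any $s$ and (\ref{1.4}) is trivial), that (\ref{1.10}) yields (\ref{1.5}) for every $\nu_2$, and that non-reducibility of the fixed kernel forces the left-hand side of (\ref{1.6}) to be a fixed positive multiple of $\sigma_\TT^2$, exactly the ``simple calculation'' the paper invokes. The only caveat, shared with the paper's own bookkeeping, is that Theorem~1 literally requires $\nu_1\in(0,1/2)$, so taking the $\nu_1$-slot equal to $1$ in $\tilde\nu$ is justified not by choosing $\nu_1=1$ but by observing that $R\equiv 0$ makes every appearance of $\nu_1$ in the proof vacuous.
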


For $U$-statistics with fixed kernel $h$ the validity of the Edgeworth expansion (\ref{1.2}) up to
the order $o(N^{-1})$ was established by Callaert, Janssen and
Veraverbeke (1980) \cite{Callaert_Janssen_Veraverbeke_1980}
and 
Bickel, G\"otze  and van Zwet (1986) \cite{Bickel_Goetze_vanZwet_1986}.
In addition to the moment conditions (like (\ref{1.9})) and Cram\'er's
condition (like (\ref{1.10})) they  imposed  the following rather implicit
conditions which ensure the regularity of $\FF_U(x)$.
Callaert, Janssen and
Veraverbeke (1980) \cite{Callaert_Janssen_Veraverbeke_1980}
assumed that for some $0<c<1$ and
$0<\alpha<1/8$ the event
\begin{equation}\label{1.11}
\Bigl|\E\bigl(\exp\{it\sigma_\UU^{-1}\sum_{j=m+1}^Nh(X_1,X_j)\}\,\bigl|
X_{m+1},\dots, X_N\bigr)\Bigr|\le c
\end{equation}
has probability $1-o(1/N\log N)$ uniformly for all $t\in
[N^{3/4}/\log N,\, N\log N]$. Here $m\approx N^\alpha$.

Bickel, G\"otze  and van Zwet (1986) \cite{Bickel_Goetze_vanZwet_1986}
more explicitly required that the linear operator,
$f(\cdot)\to  \E \psi(X,\cdot)f(X)$ defined by $\psi$
has sufficiently large number of non-zero eigenvalues (depending on
the existing moments).

Both of these  conditions correspond to some techniques used in the
parts of our proof. Recall that in the first step of the proof,
one reduces the problem
of bounding
 $|\FF_U(x)-G(x)|$ by means of Berry-Esseen's smoothing inequality, 
  to that of bounding the difference between the corresponding Fourier
 transforms
$|{\hat \FF}_U(t)-{\hat G}(t)|$ in the region $|t|<N\e_N^{-1}$, for some $\e_N\downarrow 0$.
For $|t|\approx N$ one writes
$|{\hat \FF}_U(t)-{\hat G}(t)|\le |{\hat \FF}_U(t)|+|{\hat G}(t)|$
and bounds every summand separately. The condition (\ref{1.11}) applies more or less directly
and shows exponential decay of $|{\hat \FF}_U(t)|$ as $N\to \infty$, for $|t|\approx N$.
 The eigenvalue condition
achieves the same goal, but
 refers to a more sophisticated approach based on a symmetrization technique introduced by G\"otze (1979) \cite{Goetze_1979}.

 The condition (\ref{1.6}) provides
  an alternative to (\ref{1.11}) and the eigenvalue
 condition. It is aimed to exclude  the situations,
  where the interplay between the linear and the quadratic part produces a nearly
  lattice structure of $\UU$, which in turn results in
  fluctuations of $\FF_U(x)$
  of magnitude $O(N^{-1})$. The  proof of Theorem \ref{Theorem 1} uses a result of Kleitman 
which provides a  solution to multidimensional Littlewood-Offord problem.
This result establishes
  bounds
  for probabilities of the concentration of sums of
 random variables with values in
  multidimensional spaces.
\vskip 0.1cm
{\bf Remark 4.} The $U$-statistic (\ref{1.8})
  with a kernel $h(x,y)=v(x)u(y)+v(y)u(x)$,
where $\E u(X_1)=0$, violates (\ref{1.6}).
 Let us note that in this case one can
establish the validity  of an Edgeworth expansion with the remainder $o(N^{-1})$,
 under the following bivariate Cram\'er's condition,
which is essentially more restrictive than
condition  (ii),
$$
1
-
\sup\{|\E\exp\{i\frac{t}{\sigma}u(X_1)+i\frac{s}{\sigma}v(X_1)\}|:
\, \beta_3^{-1}<|t|\le N^{\nu+1/2},
\
|s|\le N^{\nu}\}>\delta,
$$
for some $\delta,\nu>0$. Note that from this condition we immediately obtain
the desired exponential decay of  ${\hat{\FF}}(t)$ as $N\to \infty$ for
$|t|\approx N$.

 The remaining parts of the paper (Sections 2---5) contain
the proof of Theorem 1.
Auxiliary results are placed in the Appendix.

\section {Proof of Theorem 1}

The proof
 combines various techniques developed in earlier
papers by Callaert, Janssen and Veraverbeke (1980)
\cite{Callaert_Janssen_Veraverbeke_1980}, 
Bickel,
G\"otze and van Zwet (1986) \cite{Bickel_Goetze_vanZwet_1986}. 
It is based on the manuscript of G\"otze and van Zwet (1992) \cite{Goetze_vanZwet_1992}.
The later paper introduces the
condition (\ref{1.6}), provides the crucial counter example, see Example 1 above, and contains
an outline of the proof
in the particular case of $U$-statistics of degree three
($\TT-\E\TT=L+Q+K$).
In order to extend these arguments to general symmetric statistics
we combine stochastic expansions by means of Hoeffding's
decomposition and bounds for various parts of the
decomposition. This approach was introduced in van Zwet (1984) \cite{vanZwet_1984}  and
further developed in \cite{Bentkus_Goetze_vanZwet_1997}.

\bigskip

{\bf 2.1.} Let us start with an outline of the proof.
Firstly, using the linear structure induced by Hoeffding's
decomposition we replace $\TT/\sigma_\TT$ by the statistic ${\tilde \TT}$
which is conditionally linear given $X_{m+1},\dots, X_N$.
Secondly, invoking  a smoothing inequality we pass
from distribution functions to Fourier transforms.
In the remaining steps we  bound the difference $\delta(t)=\E e^{it {\tilde \TT}}- {\hat G}(t)$,
for
$|t|\le N^{1+\nu}$. For "small frequencies" $|t|\le C N^{1/2}$, we expand the
characteristic function $\E e^{it {\tilde \TT}}$  in order to show that $\delta(t)=o(N^{-1})$.
For remaining range of frequencies, that is $C N^{1/2}\le
|t|\le N^{1+\nu}$, we bound the summands $\E e^{it {\tilde \TT}}$
and ${\hat G}(t)$ separately. The cases of "large frequencies",
that is
$N^{1-\nu}\le |t|\le N^{1+\nu}$, and  "medium frequencies", that
is
$C\sqrt N\le |t|\le N^{1-\nu}$, are treated in a different manner.
For medium frequencies the Cram\'er type condition (\ref{1.5})
ensures an
exponential decay of $|\E e^{it {\tilde \TT}}|$ as $N\to \infty$.
For large frequencies we combine conditions (\ref{1.5}) and (\ref{1.6}). Here we apply
a {\it combinatorial} concentration bound due to Kleitman as described in the introduction.

{\bf 2.2.} Before starting the  proof we  introduce some notation.
By $c_*$ we shall denote a positive constant which may depend only
on $A_*,D_*,M_*, r, s, \nu_1,\nu_2, \delta$, but it does not depend on $N$.
 In different
places the values of $c_*$ may be different.

  It
is convenient to write the decomposition in the form
\begin{equation}\label{2.1}
\TT=\E\TT+\sum_{1\le k\le N}\UU_k,
\qquad
\UU_k=\sum_{1\le i_1<\cdots<i_k\le N}g_k(X_{i_1},\dots, X_{i_k}),
\end{equation}
where, for every $k$, the symmetric kernel $g_k$ is centered, i.e.,
$\E g_k(X_1,\dots, X_k)=0$, and
satisfies, see, e.g., \cite{Bentkus_Goetze_vanZwet_1997},
\begin{equation}\label{2.2}
\E\bigl(g_k(X_1,\dots, X_k)\bigr|X_2,\dots, X_k)=0
\qquad
{\text{almost surely}}.
\end{equation}
Here we write $g_1:= N^{-1/2}g$, $g_2:= N^{-3/2}\psi$ and $g_3:= N^{-5/2}\chi$.
Furthermore, for an integer $k>0$ write $\Omega_k=\{1,\dots, k\}$.
Given a subset
$A=\{i_1,\dots, i_k\}\subset \Omega_N$ we write, for short,
$T_A:=g_k(X_{i_1},\dots, X_{i_k})$. Put $T_{\emptyset}:=\E \TT$.
Now the decomposition (\ref{2.1}) can be written as follows
$$
{\TT}
=
\E \TT+\sum_{1\le k\le N}{\mathbb U}_k
=
\sum_{A\subset \Omega_N}T_A,
\qquad
{\mathbb U}_k=\sum_{|A|=k,\, A\subset \Omega_N}T_A.
$$

\bigskip

{\bf 2.3.} {\it Proof of Theorem 1.}
Throughout the proof we assume without loss of generality that
\begin{equation}\label{2.3}
4<r\le 5,
\qquad
2<s\le 3
\qquad
{\text{and}}
\qquad
\E\TT=0,
\qquad
\sigma_\TT^2=1.
\end{equation}
Denote, for $t>0$,
$$
\beta_t=\sigma^{-t}\E|g(X_1)|^t,
\qquad
\gamma_t=\E|\psi(X_1,X_2)|^t,
\qquad
\zeta_t=\E|\chi(X_1,X_2,X_3)|^t.
$$

{\it The linearization step.}
Choose number $\nu>0$ and integer $m$ such that
\begin{equation}\label{2.4}
\nu=600^{-1}\min\{\nu_1,\, \nu_2,\, s-2, \, r-4\},
\qquad
m\approx N^{100\nu}.
\end{equation}
Split
$$
{\mathbb T}={\mathbb T}_{[m]}+{\mathbb W},
\qquad
{\mathbb T}_{[m]}=\sum_{A:\, A\cap \Omega_m\neq\emptyset}T_A,
\qquad
{\mathbb W}=\sum_{A:\, A\cap \Omega_m=\emptyset}T_A.
$$
Furthermore, write
\begin{align*}
{\mathbb T}_{[m]}
&
=
{\mathbb U}_1^*+{\mathbb U}_2^*+\Lambda,
\qquad
\Lambda=\Lambda_1+\Lambda_2+\Lambda_3+\Lambda_4+\Lambda_5,
\\
{\mathbb U}_1^*
&
=
\sum_{i=1}^mT_{\{i\}},
\qquad
{\mathbb U}_2^*
=
\sum_{i=1}^m\sum_{j=m+1}^N T_{\{i,j\}},
\\
\Lambda_1
&=
\sum_{1\le i<j\le m} T_{\{i,j\}},
\qquad
\Lambda_2
=
\sum_{|A|\ge 3, |A\cap \Omega_m|=2} T_A,
\\
\Lambda_3
&
=
\sum_{A:\, |A\cap \Omega_m|\ge 3} T_A,
\qquad
\Lambda_4=\sum_{|A|=3,\, |A\cap \Omega_m|=1}T_A,
\\
\Lambda_5
&
=
\sum_{i=1}^m\eta_i,
\qquad
\eta_i=\sum_{|A|\ge 4,\, A\cap\Omega_m=\{i\}}T_A.
\end{align*}

Before applying a smoothing inequality  we  replace
$\FF(x)$ by
$$
{\tilde \FF}(x):=\P\{{\tilde {\mathbb T}}\le x\},
\qquad{\text{ where}}
\qquad
{\tilde {\mathbb T}}
=
{\mathbb U}_1^*+{\mathbb U}_2^*+{\mathbb W}= {\mathbb T}-\Lambda.
$$

In order to  show that $\Lambda$ can be neglected we apply a simple Slutzky type argument.
 Given $\e>0$,  we have
\begin{equation}\label{2.5}
\Delta
\le
\sup_{x\in\RR}|{\tilde \FF}(x)-G(x)|
+
\e\,\sup_{x\in\RR}|G'(x)|+\P\{|\Lambda|>\e\}.
\end{equation}
From  Lemma \ref{LA1.2} we obtain via Chebyshev's
inequality, for $\e=N^{-1-\nu}$,
\begin{align*}
\P\{|\Lambda|>\e\}
&
\le
\sum_{i=1}^5\P\{|\Lambda_i|>\frac{\e}{5}\}
\\
&
\le
\bigl(\frac{5}{\e}\bigr)^3\E|\Lambda_1|^3
+
\bigl(\frac{5}{\e}\bigr)^2(\E\Lambda_2^2+\E\Lambda_3^2+\E \Lambda_5^2)
+
\bigl(\frac{5}{\e}\bigr)^s\E|\Lambda_4|^s
\\
&
\le c_*N^{-1-\nu}.
\end{align*}
In the last step we used conditions (\ref{1.3}), (\ref{1.4}) and the inequality
(\ref{A1.3}).
Furthermore, using (\ref{1.3}) and (\ref{1.4}) one can show that
\begin{equation}\label{2.6}
\sup_{x\in\RR}|G'(x)|\le c_*.
\end{equation}
 Therefore, (\ref{2.5}) implies
 $$
 \Delta\le {\tilde \Delta}+c_*N^{-1-\nu},
 \qquad
 {\tilde \Delta}:=\sup_{x\in\RR}|{\tilde \FF}(x)-G(x)|.
 $$
 It remains to show that 
 ${\tilde \Delta}\le c_*N^{-1-\nu}$.

 {\it A smoothing inequality.} Given $a>0$ and even integer $k\ge 2$
 consider the probability density function, see (10.7) in
 Bhattacharya and Rao (1986) \cite{Bhattacharya_Rao_1986},
\begin{equation}\label{2.7}
 x\to g_{a,k}(x)=a \,c(k)(ax)^{-k}\sin^k(ax),
\end{equation}
  where $c(k)$ is the normalizing constant. Its
  characteristic function
  $$
  {\hat g}_{a,k}(t)
  =
  \int_{-\infty}^{+\infty} e^{itx}g_{a,k}(x)dx
  =
  2\pi\,  a \,
  c(k)u^{*k}_{[-a,a]}(t)
 $$
 vanishes outside the interval $|t|\le ka$. Here $u^{*k}_{[-a,a]}(t)$ denotes the
probability  density function of the sum of $k$ independent random variables
 each uniformly distributed in $[-a,a]$. It is easy to show that the function
 $t\to {\hat g}_{a,k}(t)$ is unimodal and symmetric around $t=0$.

Let $\mu$ be the probability distribution with the density
$g_{a,2}$, where $a$ is chosen to satisfy $\mu([-1,1])=3/4$.
Given $T>1$ define $\mu_T({{\A}})=\mu(T{{\A}})$, for ${{\A}}\subset \RR-$ measurable. Let
${\hat \mu}_T$ denote the characteristic function corresponding to
$\mu_T$.

We apply Lemma 12.1 of Bhattacharya and Rao (1986) \cite{Bhattacharya_Rao_1986}. It follows
from (\ref{2.6}) and the identity $\mu_T([-T^{-1}, T^{-1}])= 3/4$ that
\begin{equation}\label{2.8}
{\tilde \Delta}
\le
 2\sup_{x\in \RR}\bigl|({\tilde{{{\F}}}}-{{\G}})*\mu_T(-\infty,x]\bigr|
 +
 c_*T^{-1}.
\end{equation}
Here ${\tilde {\F}}$ respectively ${\G}$ denote the probability distribution of
${\tilde \TT}$ respectively the signed measure with density
function $G'(x)$. Furthermore, $*$ denotes the convolution
operation.
Proceeding as in the proof of Lemma 12.2 ibidem we obtain
\begin{equation}\label{2.9}
({\tilde{\F}}-{\G})\ast\mu_T(-\infty,x]
=
\frac{1}{2\pi}\int_{-\infty}^{+\infty}
e^{-itx}
\Bigl( \E e^{it{\tilde \TT}}-{\hat G}(t)\Bigr)
\frac{{\hat \mu}_T(t)}{-it}dt,
\end{equation}
where ${\hat G}$ denotes the Fourier transform of $G(x)$. Note that
${\hat \mu}_T(t)$ vanishes outside the interval $|t|\le 2aT$.
Finally, we obtain from (\ref{2.8}) and (\ref{2.9}) that
\begin{equation}\label{2.10}
{\tilde \Delta}
\le
\frac{1}{\pi}
\sup_{x\in \RR}|I(x)|
+c_*\frac{2a}{T},
\qquad
I(x):=
\int_{-T}^{T}
e^{-itx}
\bigl(\E e^{ it {\tilde \TT} } - {\hat G} (t)\bigr)
\frac{{\hat \mu}_{T'}(t)}{-it}dt,
\end{equation}
where $T'=T/2a$. Here we use the fact that ${\hat \mu}_{T'}(t)=0$ for $|t|>T$.

Choose $T=N^{1+\nu}$ and denote
$K_N(t)= {\hat \mu}_{T'}(t)$. Note that $|K_N(t)|\le 1$ (since $\mu_{T'}$
is a probability measure). Write
\begin{align*}
&
|I(x)|\le c\, I_1+c \,I_2+|I_3|+|I_4|,
\\
&
I_1
=
\int_{|t|\le t_1}
\bigl|\E e^{ it {\tilde \TT} } - {\hat G}(t)\bigr|
\frac{dt}{|t|},
\quad
I_2=\int_{t_1<|t|<T}|{\hat G}(t)|\frac{dt}{|t|},
\\
&
I_3=\int_{t_1<|t|<t_2}e^{-itx}\E e^{it{\tilde \TT}}\frac{K_N(t)}{-it}dt,
\quad
I_4=\int_{t_2<|t|<T}e^{-itx}\E e^{it{\tilde \TT}}\frac{K_N(t)}{-it}dt.
\end{align*}
Here we denote $t_1=N^{1/2}10^{-3}/\beta_3$
  and $t_2=N^{1-\nu}$.

In view of (\ref{2.10}) the bound ${\tilde \Delta}\le c_*N^{-1-\nu}$
follows from the bounds
\begin{equation}\label{2.11}
|I_k|\le c_*N^{-1-\nu},
\quad
k=1,2,3,
\quad
{\text{and}}
\quad
|I_4|\le c_*N^{-1-\nu}(1+\delta_*^{-1}N^{-\nu}).
\end{equation}
The bound $I_2\le c_*N^{-1-\nu}$ is a consequence of the
exponential decay of $|{\hat G}(t)|$ as $|t|\to \infty$. For
$k=3,4$ the bound (\ref{2.11}) is shown in Section 3. The proof
of (\ref{2.11}), for $k=1$ is based on careful expansions and
is given
Section 5.

\section{Large frequencies}

Here we prove the bounds (\ref{2.11}) for $I_3$ and $I_4$. The proof of the bound
$|I_3|\le c_*N^{-1-\nu}$ is relatively simple and it is deferred to
the end of the section.

Let us show that
\begin{equation}\label{3.1}
\Bigl|
\int_{N^{1-\nu}<|t|<N^{1+\nu}}e^{-itx}\E e^{it{\tilde \TT}}\frac{K_N(t)}{-it}dt
\Bigr|
\le c_*\frac{1+\delta_{*}^{-1}}{N^{1+2\nu}}.
\end{equation}
In what follows we assume that $N$ is sufficiently large, say
$N>C_*$, where $C_*$ depends only on $A_*,D_*,M_*,r,s,\nu_1,\nu_2,\delta$.
We use this inequality in several places below, where the constant $C_*$
can be easily specified. Note that for small $N$ such that $N\le C_*$ the
inequality (\ref{3.1}) becomes trivial.

\bigskip
{\bf 3.1.} {\bf Notation.} Let us first introduce some notation.
Introduce the number
$$
\alpha=3/(r+2).
$$
For $r\in (4,5]$ and $\nu$, defined by (\ref{2.4}), we have
$$
2/r<\alpha<1/2
\qquad
{\text{and}}
\qquad
80\nu <\min\{r\alpha-2,\, 1-2\alpha\}.
$$
 Given $N$ introduce the integers
\begin{equation}\label{3.2}
n\approx N^{50\nu},
\qquad
M=\lfloor(N-m)/n\rfloor.
\end{equation}
We have $N-m=M\,n+s$, where
the integer $0\le s<n$.
Observe, that the inequalities $\nu<600^{-1}$ and  $m<N^{1/2}$, see (\ref{2.4}),
imply $M>n$. Therefore, $s<M$. Split the index set
\begin{eqnarray}\label{3.3}
&&
\{m+1,\dots, N\}=O_1\cup O_2\cup \dots\cup O_{n},
\\
\nonumber
&&
O_i=\{j:\, m+(i-1)M< j\le m+iM\},
\quad
1\le i\le n-1,
\\
\nonumber
&&
O_n=\{j:\, m+(n-1)M<j\le N\}
\end{eqnarray}
Clearly, $O_1,\dots, O_{n-1}$ are of equal size (=$M$) and
$|O_n|=M+s<2M$.

We shall assume that the random variable $X:\Omega\to \X$ is defined on the
probability space $(\Omega, P)$ and   $P_X$ is the
probability distribution on $\X$ induced by $X$.
Given $p\ge 1$ let $L^p=L^p(\X,P_X)$ denote the space of real
functions $f:\X\to\RR$ with $\E|f(X)|^p<\infty$. Denote
$\|f\|_p=(\E|f(X)|^p)^{1/p}$.
With  a random variable $g(X)$ we associate an element $g=g(\cdot)$ of
$L^p$, $p\le r$.
Let
$p_g:L^2\to L^2$
denotes the projection onto the subspace orthogonal
 to the vector $g(\cdot)$ in $L^2$.
Given $h\in L^2$, decompose
\begin{equation}\label{3.4}
 h=a_hg+h^*,
 \qquad
 a_h=\left< h,g\right>\|g\|_2^{-2},
 \qquad
 h^*:=p_g(h),
\end{equation}
 where $\left<h,g\right>=\int h(x)g(x)P_X(dx)$.
For $h\in L^r$
 we
 have
 \begin{equation}\label{3.5}
 \|h\|_r\ge \|h\|_2\ge \|h^*\|_2.
\end{equation}
Furthermore, for $r^{-1}+v^{-1}=1$ (here $r\ge 2\ge v>1$) we have
$$
|\left<h,g\right>|\le \|h\|_r\|g\|_v\le \|h\|_r\|g\|_2.
$$
In particular,
\begin{equation}\label{3.6}
|a_h|\le \|h\|_r/\|g\|_2.
\end{equation}
It follows from the decomposition (\ref{3.4}) and (\ref{3.6}) that
\begin{eqnarray}
\|h^*\|_r
&
\le
& 
\|h\|_r+|a_h|\,\|g\|_r\le \|h\|_r(1+\|g\|_r/\|g\|_2)
\\
&
=
\label{3.7}
&
 c_g\|h\|_r,
\qquad
\quad
c_g:=1+\|g\|_r/\|g\|_2.
\end{eqnarray}

Note that $c_g\le c_g^*:=1+M_*^{1/r}A_*^{-1/2}$.
Introduce the numbers
\begin{equation}\label{3.8}
a_1
=
\frac{1}{4}
\min
\bigl\{
\frac{1}{12c_g^*}, \, \frac{(c_rA_*/2^rM_*)^{1/(r-2)}}{1+4A_*^{-1/2}}
\bigr\},
\qquad
c_r=\frac{7}{24}\frac{1}{2^{r-1}}.
\end{equation}

We shall show  that there exist $\delta',\delta''>0$ depending on
$A_*, M_*, \delta$ only such that (uniformly in $N$) Cram\'er's
characteristic $\rho$, see (\ref{1.5}), satisfies
\begin{equation}\label{3.9}
\rho(a_1,2N^{-\nu + 1/2})\ge \delta',
\qquad
\rho((2\beta_3)^{-1},N^{\nu_2+1/2})\ge \delta''.
\end{equation}
We shall prove  the first inequality only.
In view of (\ref{1.5}) it suffices to show that
$\rho(a_1,\beta_3^{-1})\ge \delta'$.
Invoking the simple inequality, see, e.g., proof of (\ref{A2.11}) below,
$$
|\E e^{it\sigma^{-1}g(X_1)}|\le 1-2^{-1}t^2(1-3^{-1}|t|\beta_3)
$$
we obtain, for $|t|\le \beta_3^{-1}$,
$$
|\E e^{it\sigma^{-1}g(X_1)}|\le 1-t^2/3.
$$
Therefore, $\rho(a_1,\beta_3^{-1})\ge a_1^2/3$ and we can choose
$\delta'=\min\{\delta, a_1^2/3\}$ in (\ref{3.9}).

Introduce the constant (depending only on $A_*,M_*,\delta$)
\begin{equation}\label{3.10}
\delta_1= \delta'/(10 c_g^*).
\end{equation}
Note that $0<\delta_1<1/10$.

Given $f\in L^r$ and  $T_0\in {\mathbb R}$ such that
\begin{equation}\label{3.11}
N^{-\nu+1/2}\le|T_0|\le N^{\nu+1/2},
\end{equation}
denote
\begin{eqnarray}
\nonumber
&&
I(T_0)=[T_0, \, T_0+\delta_1N^{-\nu+1/2}],
\\
\label{3.12}
&&
\tau (f)=1-v^2(f),
\qquad
v(f)
=
\sup_{t\in I(T_0)}|u_t(f)|,
\\
\nonumber
&&
u_t(f)=\int\exp\bigr\{it\bigl(g(x)+N^{-1/2}f(x)\bigr)\bigr\}P_X(dx).
\end{eqnarray}

Given a random variable $\eta$ with values in
$L^r$ and number $0<s< 1$ define
\begin{equation}\label{3.13}
d_s(\eta,I(T_0))
=
\II_{\{v^2(\eta)>1-s^2\}}
\II_{\{\|\eta\|_r\le N^{\nu}\}},
\qquad
\delta_s(\eta,I(T_0))=\E d_s(\eta, I(T_0)).
\end{equation}

Introduce the function
\begin{equation}\label{3.14}
\psi^{**}(x,y)=\psi(x,y)-b(x)g(y)-b(y)g(x)
\end{equation}
 and the number
$$
\delta_3^2=\E|\psi^{**}(X_1,X_2)|^2.
$$
It follows from (\ref{1.6}) and our assumption that $\sigma_\TT^2=1$, see (\ref{2.3}),
 that $\delta_3^2\ge \delta_{*}^2$.

\bigskip
{\bf 3.2.} {\bf Proof of (\ref{3.1}).}
The proof of (\ref{3.1}) is rather technical and therefore will be illustrated by
an outline.
In the
first step we truncate random variables $X_{m+1},\dots, X_N$  in a
special way using conditioning  and replace them by
corresponding "truncated" random variables $Y_{m+1},\dots, Y_N$.
Correspondingly the
statistic ${\tilde {\mathbb T}}$ is, then, replaced by $T'$, see (\ref{3.20}).
In the second step we split the interval of frequencies
$N^{1-\nu}\le |t|\le N^{1+\nu}$
into non overlapping intervals $\cup_{p}J_p$ of  sizes $\approx N^{1-\nu}$
so that the integral  (\ref{3.22})
splits into the sum (\ref{3.23}).
Conditionally, given $Y_{m+1},\dots, Y_N$, the statistic $T'$ is linear in
observations $X_1,\dots, X_m$, since in 
${\tilde {\mathbb T}}$ we have removed
the higher order terms (in $X_1,\dots, X_m$) from $\mathbb T$.
Let $\E_{\mathbb Y}$ denote the conditional
expectation given $Y_{m+1},\dots, Y_N$. The  conditional characteristic
function
$\E_{\mathbb Y}\exp\{itT'\}=\alpha_t^m\exp\{itW'\}$
contains the multiplicative component $\alpha_t^m$, where
$$
\alpha_t
=
\E_{\mathbb Y}
\exp\{itN^{-1/2}g(X_1)+itN^{-3/2}\sum_{j=m+1}^N\psi(X_1,Y_j)\}
$$
and where the real valued statistic $W'$ is obtained from $W$
replacing $X_j$ by $Y_j$, for $m+1\le j\le N$.
In order to bound $|\E_{\mathbb Y}e^{itT'}|$ one would like to show
exponential decay (in $m$) of the product $|\alpha_t^m|$ using a
Cram\'er type
condition like (\ref{1.5}) above.
 For $|t|=o( N)$ (the case of medium frequencies)   the size of the quadratic part
$N^{-3/2}\sum_{j=m+1}^N\psi(X_1,Y_j)$ can be
neglected and Cram\'er's condition
implies  $|\alpha_t|\le 1-v'$ for some $v'>0$. Thus we obtain
$|\alpha_t^m|\le e^{-mv'}$.
For large
frequencies $|t|\approx  N$, the contribution of the quadratic part becomes significant and we
introduce  an extra moment condition (\ref{1.6}).
Using (\ref{1.6}), we show that, for a large set of values $t\in J_p$,  Cram\'er's
condition (\ref{1.5})  yields the desired decay of $|\alpha_t^m|$.
Furthermore, the measure of the remaining $t$ is small with high probability.

\smallskip
{\it Step 1.} {\it Truncation.}
Recall that the random variable $X:\Omega\to \X$ is defined on the
probability space $(\Omega, P)$.
Let $X'$ be an independent copy so that
$(X,X')$ is defined on $(\Omega\times \Omega', P\times P)$, where $\Omega'=\Omega$.
It follows from $\E |\psi(X,X')|^r<\infty$,
by Fubini, that for $P$ almost all $\omega'\in \Omega'$ the function
$\psi(\cdot, X'(\omega'))=\{x\to \psi(x,X'(\omega')),
\, x\in \X\}$ is an element of
 $L^r$.
Furthermore, one can define an $L^r$-valued  random variable
  $Z':\Omega'\to L^r$ such that $Z'(\omega')=\psi(\cdot,  X'(\omega'))$,
  for $P$ almost all $\omega'$.
 Consider the event ${\tilde \Omega}=\{\|Z'\|_r\le N^{\alpha}\}\subset \Omega'$
 and denote $q_N=P({\tilde \Omega})$.
Here $\|Z'\|_r=(\int|\psi(x,X'(w'))|^rP_X(dx))^{1/r}$ denotes
the $L^r$ norm of the random vector $Z'$.
Let $Y:{\tilde \Omega}\to \X$ denote the random variable $X'$
conditioned on the event ${\tilde \Omega}$. Therefore  $Y$
is defined on the probability space $({\tilde \Omega}, {\tilde
P})$, where ${\tilde P}$ denotes the restriction of $q_N^{-1}P$ to the set
${\tilde \Omega}$ and,  for every
$\omega'\in {\tilde \Omega}$, we have
$Y(\omega')=X'(\omega')$. Let $Z$ denote the $L^r-$ valued random element
$\{x\to\psi(x,\, Y(\omega'))\}$ defined on the probability space $({\tilde \Omega}, {\tilde
P})$.

We can assume that ${\mathbb X}:=(X_1,\dots, X_N)$ is a
sequence of independent copies of $X$
 defined on the probability space
$(\Omega^N, P^N)$. Let ${\overline \omega}=(\omega_1,\dots,
\omega_N)$ denote an element of $\Omega^N$. Every   $X_j$
defines random vector $Z_j'=\psi(\cdot, X_j)$
taking values in $L^r$.
Introduce  events $A_j:=\{\|Z_j'\|_r\le N^{\alpha}\}\subset \Omega^N$ and
 let ${\mathbb X}'=(X_1,\dots, X_m,Y_{m+1},\dots, Y_N)$ denote the sequence
${\mathbb X}$ conditioned on the event
$\Omega^*=\cap_{j=m+1}^NA_j=\Omega^{m}\times {\tilde \Omega}^{N-m}$. Clearly,
${\mathbb X}'({\overline \omega})={\mathbb X}({\overline \omega})$ for every
 ${\overline\omega}\in \Omega^*$ and  ${\mathbb X}'$ is
 defined on the space
$\Omega^{m}\times{\tilde \Omega}^{N-m}$ equipped with the probability
measure $P^m\times{\tilde P}^{N-m}$. In particular, the random variables
$X_1,\dots, X_m, Y_{m+1},\dots, Y_{N}$ are independent and
$Y_j$, for $m+1\le j\le N$, has the same distribution as $Y$.
Let $Z_j$ denote the $L^r-$ valued random element $\{x\to
\psi(x,Y_j)\}$, for $m+1\le j\le N$.

We are going to replace $\E e^{it{\tilde{\mathbb T}}}$ by $\E e^{itT'}$.
For $s>0$ we have almost surely
\begin{equation}\label{3.15}
1-{\mathbb I}_{A_j}\le N^{-\alpha \,s}\|Z_j'\|_r^{s},
\qquad
\|Z_j'\|_r^r=\E \bigl( |\psi(X,X_j)|^r \bigl|\, X_j\bigr).
\end{equation}
Therefore,
 by  Chebyshev's inequality, for $s=r$,
\begin{equation}\label{3.16}
0
\le
1-q_N
\le
N^{-r\alpha}\E|\psi(X,X_j)|^r
\le
N^{-r\alpha}M_*
\le
c_* N^{-2-3\nu}.
\end{equation}
We have for $k\le N$
\begin{eqnarray}
\nonumber
q_N^{-k}
&
\le
&
(1-N^{-r\, \alpha}M_*)^{-k}
\le
(1-N^{-2}M_*)^{-N}
\le
c_*,
\\
\label{3.17}
q_N^{-k}-1
&
\le
&
 kq_N^{-k}(1-q_N)\le c_*kN^{-2-3\nu}\le c_*N^{-1-3\nu}.
\end{eqnarray}
For a measurable function $f:\X^N\to {\mathbb R}$, we have
\begin{equation}\label{3.18}
\E f(X_1,\dots, X_m, Y_{m+1},\dots, Y_N)
=
\E f(X_1,\dots, X_N)
\frac
{ {\mathbb I}_{A_{m+1}}\dots {\mathbb I}_{A_N} }
{q_N^{(N-m)}}.
\end{equation}
Therefore, for
$f\ge 0$, (\ref{3.17}) and (\ref{3.18}) imply
\begin{equation}\label{3.19}
\E f(X_1,\dots, X_m, Y_{m+1},\dots, Y_N)
\le
c_*\E f(X_1,\dots, X_N)
\end{equation}
Furthermore, for
\begin{equation}\label{3.20}
T':={\tilde {\mathbb T}}(X_1,\dots, X_m,Y_{m+1},\dots, Y_N)
\end{equation}
we have, by (\ref{3.17}) and (\ref{3.18}),
\begin{eqnarray}
\nonumber
|\E e^{it(T'-x)}-\E e^{it({\tilde {\mathbb T}}-x)}|
&
\le
&
\bigl(q_N^{-(N-m)}-1\bigr)
+
\bigl(1-\P\{ A_{m+1}\cap\dots \cap A_N \}\bigr)
\\
\label{3.21}
&
=
&
(q_N^{-(N-m)}-1)+(1-q_N^{N-m})
\le
c_*N^{-1-3\nu}.
\end{eqnarray}

We replace ${\tilde {\mathbb T}}$
by $T'$ in the exponent in (\ref{3.1}). The error of this
replacement is $c_*N^{-1-2\nu}$, by (\ref{3.21}) and
 the simple inequality $|K_N(t)|\le 1$, for every $t$.
In order to prove (\ref{3.1}) we shall show that
\begin{eqnarray}\label{3.22}
&&
I
:=
\int_{N^{1-\nu}
\le
|t|\le N^{1+\nu}} \E e^{it{\hat T}}v_N(t)dt\le
c_*\frac{1+\delta_3^{-1}}{N^{1+2\nu}},
\\
\nonumber
&&
v_N(t)=t^{-1} K_N(t),
\qquad
{\hat T}=T'-x.
\end{eqnarray}
{\it Step 2.} Here we prove (\ref{3.22}).
Split the integral
\begin{equation}\label{3.23}
I=
\sum_pI_p,
\qquad
I_p=\E\int_{t\in J_p} e^{it{\hat T}}v_N(t)dt,
\end{equation}
where $\{J_p,\, p=1,2,\dots\}$ denote a sequence of
consecutive intervals of length $\approx \delta_1N^{1-\nu}$ each.
Here $\delta_1$ is a constant defined by (\ref{3.10}).
In order to prove (\ref{3.22}) we show that for every $p$,
\begin{equation}\label{3.24}
|I_p|
\le
c_*N^{-2}+c_*N^{-1-4\nu}\bigl(1+\delta_3^{-1}\bigr).
\end{equation}

Given $p$ let us prove (\ref{3.24}). Firstly, we replace $I_p$ by $\E J_*$, where
$$
J_*=\int {\mathbb I}_{\{t\in I_*\}}v_N(t)\E_{\mathbb Y}e^{it{\hat T}}dt.
$$
Here
$I_*=I_*(Y_{m+1}, \dots, Y_{N})\subset J_p$ is a random subset
defined by
\begin{equation}\label{3.25}
I_*=\{t\in J_p:\, |\alpha_t|^2>1-\e_m^2\},
\qquad
\e_m^2=m^{-1}\ln^{2}N.
\end{equation}
Since, for $t\notin I_*$, we have
$$
|\E_{\mathbb Y}e^{itT'}|
\le
|\alpha_t|^m
\le
(1-\e_m^2)^{m/2}
\le
c_*N^{-3},
$$
the error of this replacement is given by
\begin{equation}\label{3.26}
|I_p-\E J_*|\le c_*N^{-2}.
\end{equation}

Secondly, we shall show that with a high probability the set $I_*\subset J_p$
is a (random) interval. This fact and the fact that $v_N(t)$ is
monotone will be used latter to bound the integral $J_*$.
Introduce the $L^r-$ valued random element
\begin{equation}\label{3.27}
S=N^{-1/2}(Z_{m+1}+\dots+Z_N)=N^{-1/2}\sum_{j=m+1}^N\psi(\cdot, Y_j).
\end{equation}
We apply Lemma \ref{LA4.1} to the set $N^{-1/2}I_*$ conditionally on  the event
 ${\SS}=\{\|S\|_r<N^{\nu/10}\}$. This lemma shows that
 $N^{-1/2} I_*$ is an interval of size at
 most $c_*\e_m$. That is, we can write $I_*=(a_N,a_N+b_N^{-1})$
 and
\begin{equation}\label{3.28}
{\mathbb I}_{\SS} J_*={\mathbb I}_{\SS}\E_{\mathbb Y}{\tilde J}_*,
\qquad
{\tilde J}_*=\int_{a_N}^{a_N+b_N^{-1}}v_N(t)e^{it{\hat T}}dt,
\end{equation}
where the random variables (functions
of $Y_{m+1},\dots, Y_N$) satisfy
$$
a_N\in J_p
\qquad
{\text{and}}
\qquad
b_N^{-1}\le c_*\e_m\sqrt N=c_*\sqrt N m^{-1/2}\ln N.
$$
By Lemma \ref{LA5.1}, $\SS$ has at least a probability  $\P\{\SS\}\ge 1-c_*N^{-3}$. Therefore,
\begin{equation}\label{3.29}
|\E J_*-\E {\mathbb I}_{\SS} J_*|\le c_*N^{-2}.
\end{equation}
Clearly, $I_*\not=\emptyset$ if and only if
$
\alpha^2>1-\e_m^2$,
where
$$
\alpha=\sup\{|\alpha_t|:\, t\in J_p\}.
$$
Therefore,  we can write, see also (\ref{3.28}),
$$
{\mathbb I}_{\SS}J_*={\mathbb I}_{\BB}J_*={\mathbb I}_{\BB}\E_{\mathbb Y}{\tilde J}_*,
\qquad
{\text{where}}
\qquad
\BB=\{\alpha^2>1-\e_m^2\}\cap \SS.
$$
This identity together with (\ref{3.26}) and (\ref{3.29}) shows
\begin{equation}\label{3.30}
|I_p|\le |\E {\mathbb I}_{\BB}\E_{\mathbb Y}{\tilde J}_*|+c_*N^{-2}.
\end{equation}

Using the integration by parts formula we shall show that
\begin{eqnarray}
\label{3.31}
&&
|\E {\mathbb I}_{\BB}\E_{\mathbb Y}{\tilde J}_*|
\le
\frac{c} {N^{1-\nu}}
\Bigl(
\P\{\BB\}
+
\int_{b_N}^1\frac{\P\{\BB_{\e}\}}{\e^2}d\e
\Bigr),
\\
\nonumber
&
&
\BB_{\e}=\BB\cap\{|{\hat T}|\le \e\}.
\end{eqnarray}
This inequality in combination with (\ref{3.30}) and  (\ref{3.32}), see below, shows
(\ref{3.24}),
\begin{equation}\label{3.32}
\int_{b_N}^1\frac{\P\{\BB_{\e}\}}{\e^2}d\e
\le
c_*\frac{1+\delta_3^{-1}}{N^{5\nu}},
\qquad
\P\{\BB\}
\le
c_*\frac{1+\delta_3^{-1}}{N^{5\nu}}.
\end{equation}
Proof of (\ref{3.32}) is rather technical. It is given in subsection {\bf 3.3}.

Let us prove (\ref{3.31}). Firstly, we show that
\begin{equation}\label{3.33}
|{\tilde J}_*|\le c(|{\hat T}|+b_N)^{-1}a_N^{-1}.
\end{equation}
 The
 integration by parts formula shows
\begin{equation}\label{3.34}
i{\hat T}{\tilde J}_*
=
v_N(t)e^{it{\hat T}}\bigr|_{a_N}^{a_N+b_N^{-1}}
-\int_{a_N}^{a_N+b_N^{-1}}v'_N(t)e^{it{\hat T}}dt=:a'+a''.
\end{equation}
By our choice of the smoothing kernel, $v_N(t)$ is monotone
on $J_p$. Therefore,
$$
|a''|
\le
\int_{a_N}^{a_N+b_N^{-1}}|v'_N(t)|dt
=
|\int_{a_N}^{a_N+b_N^{-1}}v'_N(t)dt|
=
|v_N(a_N)-v_N(a_N+b_N^{-1})|.
$$
Invoking the simple inequality $|a'|\le|v_N(a_N)|+|v_N(a_N+b_N^{-1})|$ and using
$|v_N(t)|\le |t|^{-1}$ we obtain from
(\ref{3.34})
$$
|{\hat T}{\tilde J}_*|
\le
c
\,
\bigl(a_N^{-1}+ (a_N+b_N^{-1})^{-1}\bigr)
\le
c
\,
a_N^{-1}.
$$
For $|{\hat T}|>b_N$, this inequality implies (\ref{3.33}). For $|{\hat T}|\le b_N$ the
 inequality (\ref{3.33}) follows from the inequalities
$$
|{\tilde J}_*|
\le
\int_{a_N}^{a_N+b_N^{-1}}|v_N(t)|dt
\le
\int_{a_N}^{a_N+b_N^{-1}}\frac{c}{|t|} dt
\le
c \,
a_N^{-1}b_N^{-1}.
$$
Furthermore, it follows from (\ref{3.33}) and the
 inequality $a_N \ge N^{1-\nu}$ that
$$
|{\tilde J}_*|\le c(|{\hat T}|+b_N)^{-1}N^{-1+\nu}.
$$
Finally, we apply the inequality (which holds for arbitrary real number $v$)
$$
\frac{1}{|v|+b_N}
\le
2+2\int_{b_N}^1\frac {d\e}{\e^2}{\mathbb I}_{\{|v|\le \e\}}
$$
 to derive
 $$
 |{\tilde J}_*|
 \le
 \frac{c_*}{N^{1-\nu}}
 \bigl(
 1
 +
 \int_{b_N}^1\frac{d\e}{\e^2}{\mathbb I}_{\{|{\hat T}|\le \e\}}
 \bigr).
 $$
This inequality gives (\ref{3.31}).

\bigskip
{\bf 3.3.}
Here we prove (\ref{3.32}). The first (respectively second) inequality is proved in {\it Step A}
(respectively {\it Step B}).

\smallskip

{\it Step A.} Here we prove the first inequality of (\ref{3.32}).
 Split
 \begin{align*}
 &
 W=W_1+W_2+W_3,
 \qquad
W_1=\frac{1}{N^{1/2}}\sum_{j=m+1}^Ng(X_j),
\\
&
 W_2=\frac{1}{N^{3/2}}\sum_{m<i<j\le N}\psi(X_i,X_j),
 \qquad
 W_3=\sum_{|A|\ge 3:A\cap \Omega_m=\emptyset}T_A.
 \end{align*}
 Replacing $X_j$ by $Y_j$, for $m+1\le j\le N$, we obtain $W'=W_1'+W_2'+W_3'$.
Therefore, we can write  ${\hat T}=L+\Delta+W_3'$, where
\begin{eqnarray}\label{3.35}
&&
L
=
\frac{1}{\sqrt N}\sum_{j=1}^m g(X_j)
+
\frac{1}{\sqrt N}\sum_{j=m+1}^N g(Y_j)
-x,
\\
\nonumber
&&
\Delta
=
\frac{1}{N^{3/2}}\sum_{j=1}^m\sum_{l=m+1}^N\psi(X_j,Y_l)
+
\frac{1}{N^{3/2}}\sum_{m+1\le j<l\le N}\psi(Y_j,Y_l).
\end{eqnarray}
The inequalities
$|{\hat T}|\le \e$ and $|L|\ge 2\e$ imply $|\Delta+W_3'|>\e$. Therefore,
$$
\P\{\BB_{\e}\}
\le
\P\{\BB\cap\{|L|\le 2\e\}\, \}
+
\P\{|{\hat T}|\le \e,\, |\Delta+W_3'|\ge \e\}
=:I_1(\e)+I_2(\e).
$$
In order to prove (\ref{3.32}) we show that
\begin{equation}\label{3.36}
\int_{b_N}^1\frac{d\e}{\e^2}I_1(\e)\le c_*N^{-5\nu}(1+\delta_3^{-1}),
\qquad
\int_{b_N}^1\frac{d\e}{\e^2}I_2(\e)\le c_*N^{-5\nu}.
\end{equation}

{\it Step A.1.} Here we prove
(\ref{3.36}), for the integral over $I_2(\e)$.
We have
\begin{eqnarray}
\label{3.37}
&&
I_2(\e)\le \P\{|W_3'|>\e/2\}+I_3(\e),
\\
\nonumber
&&
I_3(\e):=\P\{|L+\Delta|<3\e/2, |\Delta|>\e/2\}.
\end{eqnarray}
It follows from (\ref{3.19}), by Chebyshev's inequality,
$
\P\{|W_3'|>\e/2\}\le c_*\e^{-2}\E W_3^2$. Furthermore, invoking the
inequalities, see (\ref{A1.2}), (\ref{A1.3}),
$$
\E W_3^2
=
\sum_{|A|\ge 3: A\cap \Omega_m=\emptyset}\E T_A^2
\le
\sum_{|A|\ge 3}\E T_A^2\le N^{-2}\Delta_3^2
\le
c_*N^{-2}
$$
we obtain from (\ref{3.37}) $I_2(\e)\le I_3(\e)+c_*\e^{-2}N^{-2}$.
Since
$$
\int_{b_N}^1\frac{d\e}{\e^2}\Bigl(\frac{1}{\e^2N^2}\Bigr)
\le
c_*
b_N^{-3}N^{-2}
\le c_*N^{-5\nu},
$$
it suffices to show (\ref{3.36}) for $I_3(\e)$.

We have, for $\Lambda_1=N^{-3/2}\sum_{1\le i<j\le m}\psi(X_i,X_j)$,
$$
I_3(\e)\le \P\{|\Lambda_1|>\e/4\}+I_4(\e),
\qquad
I_4(\e):=\P\{|L+U|<2\e,\, |U|>\e/4\},
$$
where we denote $U=\Lambda_1+\Delta$.
By Chebyshev's inequality
$$
\P\{|\Lambda_1|>\e/4\}
\le
16\e^{-2}\E\Lambda_1^2
\le
c_*\e^{-2}m^2N^{-3}.
$$
Furthermore,
$$
\int_{b_N}^1\frac{d\e}{\e^2}\Bigl(\frac{m^2}{\e^2N^3}\Bigr)
\le
c_*
b_N^{-3}m^2N^{-3}
\le c_*N^{-5\nu}.
$$
Therefore, it suffices to show (\ref{3.36}) for $I_4(\e)$.

Let $I_4'(\e)$ be the same probability as $I_4(\e)$ but with
$X_i$ replaced by $Y_i$, for $1\le i\le m$,
\begin{align*}
&
I_4'(\e)=\P\{|L'+U'|<2\e,\, |U'|>\e/4\},
\\
&
L'=\frac{1}{N^{1/2}}\sum_{1\le i\le N}g(Y_i)-x,
\qquad
U'=\frac{1}{N^{3/2}}\sum_{1\le i<j\le N}\psi(Y_i,Y_j).
\end{align*}
We have $|I_4(\e)-I_4'(\e)|\le c_*N^{-1-3\nu}$, cf. (\ref{3.21}). Since
$$
\int_{b_N}^1\frac{d\e}{\e^2}N^{-1-3\nu}\le c_*b_N^{-1}N^{-1-3\nu}\le c_*N^{-5\nu},
$$
 it
suffices to show (\ref{3.36}) for $I_4'(\e)$.

In what follows we show the bound (\ref{3.36}) for $I_4'(\e)$. Split the sample
$$
{\YY}
:=
\{Y_1,\dots,  Y_N\}
=
{\YY}_1\cup {\YY}_2\cup {\YY}_3,
$$
into
three
groups
 of nearly equal size.
Split
$U'=\sum_{i\le j}U'_{ij}$
 so that the sum $U'_{ij}$ depends on observations from
 ${\YY}_i$ and
${\YY}_j$ only.
We have
\begin{equation}\label{3.38}
I_4'(\e)
\le
\sum_{i\le j}
\P\{|L'+U'|\le 2\e,\,|U'_{ij}|\ge \e/24\}.
\end{equation}
In order to prove (\ref{3.36}) we
shall show  this bound for every summand in the
right of (\ref{3.38}).
Let ${\tilde U}$ denote a summand $U'_{ij}$, say, not
depending on ${\YY}_3$.
We shall prove (\ref{3.36}) for
\begin{eqnarray}
\label{3.39}
&&
{\tilde I}(\e)
:=
\P\{|L'+U'|\le 2\e,\, |{\tilde U}|\ge \e/24\}
=
\E{\mathbb I}_{\U}{\mathbb I}_{\V},
\\
\nonumber
&&
{\U}=\{|{\tilde U}|\ge \e/24\},
\quad
{\V}=\{|L'+U'|\le 2\e\}.
\end{eqnarray}
By the definition of $Y_1,\dots, Y_N$, the random function
$$
x
\to
{\overline S}(x)
=
N^{-1/2}
\sum_{Y_i\in {\YY}\setminus{\YY}_3}\psi(x,Y_i),
\qquad
x\in \X,
$$
defines a random variable with values in $L^r$  such that, for
every $i$,
$\|\psi(\cdot,Y_i)\|_r\le N^{\alpha}$ for almost all values of $Y_i$. An
application of Lemma \ref{LA5.1} gives
$$
\P\{\|{\overline S}(\cdot)\|_r> N^{\nu}\}\le N^{-3}.
$$
Therefore, in (\ref{3.39}) we can replace the
event $\V$ by ${\V}_1={\V}\cap\{\|{\overline S}\|_r\le N^{\nu}\}$.

Since ${\tilde U}$ does not depend on ${\YY}_3$, the
concentration bound for the conditional probability (proof of this bound is
given below)
\begin{equation}\label{3.40}
p':=\E\bigl({\mathbb I}_{{\V}_1}|{\YY}_1, {\YY}_2\bigr)
\le
c_*(\e+N^{-1/2})
\end{equation}
implies
\begin{equation}\label{3.41}
{\tilde I}(\e)
\le
c_*(\e+N^{-1/2})\P\{{\U}\}
\le
c_*(\e+N^{-1/2})\e^{-r}N^{-r/2}.
\end{equation}
In the last step we applied Chebyshev's inequality
$$
\P\{{\U}\}
\le
(24/\e)^{r}N^{-r/2}\E|N^{1/2}{\tilde U}|^r
$$
and the bound $\E|N^{1/2}{\tilde U}|^r\le c_*\E|N^{1/2}U_{ij}|^r\le c_*$, which follows from
(\ref{3.19}) and routine moment inequalities for $U$-statistics, see
Dharmadhikari, S. W., Fabian, V. Jogdeo, K. (1968).
Here the random variable $U_{ij}$ is obtained from
${\tilde U}$ after we replace $Y_j$ by $X_j$ for every $j$.

It follows from (\ref{3.41}) and the simple inequality  $\e\ge b_N\ge c_* N^{-1/2}$ that
\begin{align*}
\int_{b_N}^{1}\frac{d\e}{\e^2}{\tilde I}(\e)
&
\le \frac{c_*}{N^{r/2}}
\int_{b_N}^1\frac{d\e}{\e^{1+r}}
\le
\frac{c_*}{N^{r/2}b_N^r}
\\
&
=
c_*m^{-r/2}\ln^rN
\le
c_*
N^{-5\nu},
\end{align*}
provided that
$
m^{r/2}\ge N^{6\nu}$. The latter inequality is ensured by
(\ref{2.4}).
Thus we have shown (\ref{3.36}) for ${\tilde I}(\e)$.

It remains to prove (\ref{3.40}).
Write $L'+U'=L_*+U_*+b-x$, where
$$
L_*
=\frac{1}{N^{1/2}}\sum_{Y_j\in {\YY}_3}(g(Y_j)+N^{-1/2}{\overline S}(Y_j)),
\quad
U_*=
\frac{1}{N^{3/2}}\sum_{\{Y_j,Y_k\}\subset {\YY}_3}\psi(Y_j,Y_k),
$$
and where  $b$ is a function of
$\{Y_i\in {\YY}\setminus{\YY}_3\}$.
Introduce the random variables ${\overline L}$ and ${\overline U}$
which are obtained from $L_*$ and $U_*$ after we replace every $Y_j\in
{\mathbb Y}_3$ by the corresponding observation $X_j$.
We have
\begin{align*}
p'
&
\le \sup_{v\in R}
\E
\bigl(
{\mathbb I}_{\{L_*+U_*\in [v,v+2\e]\}}
\bigl|
{\mathbb Y}_1,
{\mathbb Y}_2
\bigr)
{\mathbb I}_{\{\|{\overline S}\|_r\le N^{\nu}\}}
\\
&
\le c_*\sup_{v\in R}
\E
\bigl(
{\mathbb I}_{\{{\overline L}+{\overline U}\in [v,v+2\e]\}}
\bigl|
 {\mathbb Y}_1,
{\mathbb Y}_2
\bigr)
{\mathbb I}_{\{\|{\overline S}\|_r\le N^{\nu}\}}.
\end{align*}
In the last step we applied (\ref{3.19}).
An application of the Berry-Esseen bound due to van Zwet (1984)
 shows (\ref{3.40}).

{\it Step A.2.} Here we prove
(\ref{3.36}) for $I_1(\e)$. Write $I_1(\e)$ in the form
\begin{align*}
I_1(\e)
&
=\E \, \II_{\AA}\II_{\SS}\II_{\WW}
\le
\E \, \II_{\AA}\II_{\VV}\II_{\WW},\\
 \AA=\{\alpha^2>1-\e_m^2\},
\quad
\VV
&
=\{\|S\|_r\le N^{\nu}\},
\quad
\WW=\{|L|<2\e\},
\end{align*}
where $\e_m$ is defined in (\ref{3.25}).
Note that, by the Berry--Esseen inequality,
\begin{equation}\label{3.42}
\P\{\WW\}\le c_*(\e+N^{-1/2}).
\end{equation}
Furthermore, one can show that the
probability of the event $\AA$ is
small, like $\P\{\AA\}=O(N^{-6\nu})$.
We are going to make use  of both of these bounds while
constructing an upper bound  for $I_1(\e)$.
Since the events $\AA$ and $\WW$ refer to the same set of random variables
$Y_{m+1},\dots, Y_N$, we cannot argue directly
that
$\E {\mathbb I}_{\AA}{\mathbb I}_{\WW}\approx \P\{\AA\}\P\{\WW\}$.
Nevertheless, invoking  a  complex
conditioning argument  we are able to  show  that
\begin{equation}\label{3.43}
I_1(\e)\le c_*{\R}(\e+N^{-1/2})+c_*N^{-2},
\qquad
{\R}:=N^{-6\nu}(1+\delta_3^{-1}).
\end{equation}
Since $\e\ge b_N>N^{-1/2}$, the inequality (\ref{3.43}) implies (\ref{3.36}).

Let us prove (\ref{3.43}). Since the proof is rather involved we start by
providing an outline,
Let the integers $n$ and $M$ be defined by (\ref{3.2}). Split
$\{1,\dots, N\}=O_0\cup O_1\cup\dots\cup O_n$, where
$O_0=\{1,\dots, m\}$ and where the sets $O_i$, for $1\le i\le n$, are defined in
(\ref{3.3}).
 Split $L$, see (\ref{3.35}),
\begin{equation}\label{3.44}
L=
\sum_{k=0}^nL_k-x,
\qquad
L_k=N^{-1/2}\sum_{j\in O_k}g(Y_j),
\qquad
{\text{for}}
\qquad
k=1,\dots, n,
\end{equation}
and $L_0=N^{-1/2}\sum_{j\in O_0}g(X_j)$.
Observe, that $\II_{\WW}$ is a function of  $L_0, L_{1},\dots, L_n$.
The random variables $\II_{\AA}$ and $\II_{\VV}$ are functions of
$Y_{m+1},\dots, Y_N$ and do not depend on $X_1,\dots, X_m$.
Therefore,
denoting
$$
m(l_1,\dots, l_n)=\E(\II_{\AA}\II_{\VV}|L_{1}=l_{1},\dots, L_n=l_n)
$$
we obtain from (\ref{3.42})
\begin{equation}\label{3.45}
\E \, \II_{\AA}\II_{\VV}\II_{\WW}
=
\E \, \II_{\WW} m(L_1,\dots, L_n)
\le
c_*(\e+N^{-1/2})\MM,
\end{equation}
where
$
\MM=ess\sup m(l_{1},\dots, l_n)$. Clearly, the  bound $\MM\le c_*{\R}$
would imply (\ref{3.43}).
Unfortunately, we are not able to establish such a bound directly.
 In what follows we prove (\ref{3.43}) using the argument outlined
 above. But we shall use  a more delicate conditioning which  allow to
estimate quantities like $\MM$.

\smallskip
{\it Step A.2.1.} Firstly we  replace $L_k$, $1\le k\le n$, by smooth random variables
\begin{equation}\label{3.46}
g_k=\frac{1}{N}\frac{\xi_k}{n^{1/2}}+L_k,
\end{equation}
where
$\xi_{1},\dots, \xi_n$
are symmetric  i.i.d. random variables with the density function
defined by (\ref{2.7}) with $k=6$ and $a=1/6$ so that
the characteristic function
$t\to \E \exp\{it\xi_1\}$ vanishes outside the
unit interval $\{t:\, |t|<1\}$.
Note that $\E \xi_1^4<\infty$.

We assume that the sequences $\xi_1,\, \xi_2, \dots $ and $X_1,\dots, X_m,Y_{m+1},\dots, Y_N$
are independent. In particular, $\xi_k$ and $L_k$ are independent.
Introduce the event
$$
{\tilde \WW}=\{| L_0+\sum_{k=1}^ng_k-x|<3\e\}.
$$
Note that
$$
\II_{\WW}
\le
\II_{\tilde \WW}
+
\II_{\{|\xi|\ge\e N\}},
\qquad
{\text{where}}
\qquad
\xi=\frac{1}{n^{1/2}}\sum_{k=1}^n\xi_k.
$$
By Chebyshev's inequality and the  inequality
$\E \xi^4\le c$,
$$
\P\{|\xi|\ge\e N\}
\le
\frac{\E\xi^4}{\e^4 N^4}
\le
\frac{c}{\e^4N^4}
\le \frac{c_*}{N^2}.
$$
Here we used the inequality $\e^2N\ge b_N^2N\ge c'_*$.
Therefore, we obtain
\begin{equation}\label{3.47}
\E\II_{\AA}\II_{\VV}\II_{\WW}
\le
\E\II_{\AA}\II_{\VV}\II_ {\tilde \WW}+c_*N^{-2}.
\end{equation}
In subsequent  steps of the proof we
 replace the conditioning on
$L_1,\dots, L_n$ (in (\ref{3.45}))  by the conditioning on the random variables
$g_1,\dots, g_n$. Since the latter random
 variables have densities (as it is shown in Lemma \ref{LA2.1} below)
the corresponding conditional distributions are much easier to handle.
Moreover,
we restrict the conditioning on the event where these densities are positive.

\smallskip
{\it Step A.2.2.} Given $w>0$, consider the events $\{|g_k|\le n^{-1/2}w\}$ and
their indicator functions $\II_k=\II_{\{|g_k|\le n^{-1/2}w\}}$. Using the simple inequality
$n\E g^2_k\le c_*$ (where $c_*$ depends on $M_*$ and $r$)  we obtain from
Chebyshev's inequality that
\begin{equation}\label{3.48}
\P\{\II_k=1\}
=
1-\P\{|g_k|>n^{-1/2}w\}
\ge
1-w^{-2}n\E|g_k|^2>7/8,
\end{equation}
where the last inequality holds for a sufficiently large constant $w$ (depending on
$M_*,\, r$).
Fix  a number $w$ such that (\ref{3.48}) holds and introduce the event
$\BB^*=\{\sum_{k=1}^{n}\II_k> n/4\}$.
Hoeffding's inequality shows $\P\{\BB^*\}\ge 1-\exp\{-n/8\}$. Therefore,
\begin{equation}\label{3.49}
\E\II_{\AA}\II_{\VV}\II_{\tilde \WW}
\le
\E\II_{\AA}\II_{\VV}\II_{\tilde \WW}\II_{\BB^*}+c_*N^{-2}.
\end{equation}

Given a binary vector $\theta=(\theta_1,\dots,\theta_{n})$ (with
$\theta_k\in \{0;1\}$) write $|\theta|=\sum_k\theta_k$.
Introduce the event $\BB_\theta=\{\II_k=\theta_k, \,
1\le k\le n\}$ and the conditional expectation
$$
m_{\theta}(z_1,\dots,z_{n})
=
\E(\II_{\AA}\II_{\VV}\II_{\BB_{\theta}}\,|\,g_1=z_1,\dots,
g_{n}=z_{n}).
$$
Note that $\II_{\BB_{\theta}}$, the indicator of
the event $\BB_{\theta}$, is a function of $g_1,\dots, g_{n}$.
%
%
It follows from the identities
$$
\BB^*=\cup_{|\theta|> n/4}\BB_{\theta}
\qquad
{\text{and}}
\qquad
\II_{\BB^*}=\sum_{|\theta|> n/4}\II_{\BB_{\theta}}
$$
(here $\BB_{\theta}\cap \BB_{\theta'}=\emptyset$, for
$\theta\not=\theta'$)
that
\begin{align*}
\E\II_{\AA}\II_{\VV}\II_{\tilde \WW}\II_{\BB^*}
&
=
\sum_{|\theta|> n/4}\E\II_{\AA}\II_{\VV}\II_{\tilde \WW}\II_{\BB_{\theta}}.
\\
&
=
\sum_{|\theta|> n/4}\E\II_{\BB_\theta}\II_{\tilde \WW}m_{\theta}(g_1,\dots, g_{n}).
\end{align*}
Assume  that we have already shown  that uniformly in $\theta$, satisfying $|\theta|> n/4$,
we have
\begin{equation}\label{3.50}
M_{\theta}\le c_*{\R},
\qquad
{\text{where}}
\qquad
M_{\theta}
:=
{\text{ess sup}}
\
m_{\theta}(z_1,\dots, z_{n}).
\end{equation}
This bound in combination with (\ref{3.42}), which extends to ${\tilde \WW}$ as
well, implies
\begin{align*}
\E\II_{\AA}\II_{\VV}\II_{\tilde \WW}\II_{\BB^*}
&
\le
c_*{\R}\sum_{|\theta|> n/4}\E\II_{\BB_\theta}\II_{\tilde \WW}
=
c_*{\R}\E\II_{\BB^*}\II_{\tilde \WW}
\\
&
\le
c_*{\R} \P\{\tilde \WW\}
\le c_*{\R}(\e+N^{-1/2}).
\end{align*}
Combining this inequality, (\ref{3.47}) and (\ref{3.49}) we obtain (\ref{3.43}).

\smallskip
{\it Step A.2.3.}
Here we show (\ref{3.50}).
Fix $\theta=(\theta_1,\dots, \theta_{n})$ satisfying
$|\theta|> n/4$. Denote, for brevity, $h=|\theta|$ and assume without loss of generality
  that
$\theta_i=1$, for $1\le i\le h$, and
$\theta_j=0$, for $h+1\le j\le n$.

Consider the $h-$dimensional random vector
${\overline g}_{[\theta]}=(g_1,\dots, g_h)$.
The  random vector ${\overline g}_{[\theta]}$ and
the sequences of random variables
$$
{\mathbb Y}_{\theta}
=
\bigl\{
Y_j:\, m+hM<j\le N
\bigr\},
\qquad
\xi_{\theta}=\{\xi_j:\, h<j\le n\}
$$
are independent.
Furthermore  the summands $S_{\theta}$
and $S'_{\theta}$ of the decomposition
$$
S=S_{\theta}+S'_{\theta},
\qquad
S_{\theta}(\cdot)=\frac{1}{\sqrt N}\sum_{1\le k\le h} \sum_{j\in O_k}\psi(\cdot,Y_j),
$$
are independent, see (\ref{3.27}). Moreover,  we have
$
m_{\theta}(z_1,\dots, z_n)
\le
{\tilde m}_{\theta}({\overline z}_{[\theta]})$,
where
$$
{\tilde m}_{\theta}({\overline z}_{[\theta]})
=
{\text{ess sup}}_{\theta}
\E
\bigl(
\II_{\AA}\II_{\VV}\II_{\BB_{\theta}}
\,
\bigr|
\,
{\overline g}_{[\theta]}={\overline z}_{[\theta]},
\,
{\mathbb Y}_{\theta},
\,
\xi_{\theta}
\bigr)
$$
denotes the  "ess sup" taken with respect to almost all  values of
${\mathbb Y}_{\theta}$ and $\xi_{\theta}$.
Here ${\overline z}_{[\theta]}=(z_1,\dots, z_h)\in {\mathbb R}^h$.
In order to prove (\ref{3.50}) we show that
\begin{equation}\label{3.51}
{\tilde m}_{\theta}({\overline z}_{[\theta]})
\le
c_*{\R}.
\end{equation}

Let us prove (\ref{3.51}). Given
${\mathbb Y}_{\theta}$,
denote $f_{\theta}=S'_{\theta}$ (note that $S'_{\theta}$ is a function of $\YY_{\theta}$).
Using the notation (\ref{3.13}), we have for the interval $J'_p=N^{-1/2}J_p$,
\begin{equation}\label{3.52}
\E
\bigl(
\II_{\AA}\II_{\VV}\II_{\BB_{\theta}}
\,
\bigr|
\,
{\overline g}_{[\theta]}={\overline z}_{[\theta]},
\,
{\mathbb Y}_{\theta},
\,
\xi_{\theta}
\bigr)
=
{\mathbb I}_{{\mathbb B}_{\theta}}
\E\bigl(
d_{\e_m}(f_{\theta}+S_{\theta}, J'_p)\,
\bigr|
\,
{\overline g}_{[\theta]}={\overline z}_{[\theta]},\, \YY_{\theta},
\,\xi_{\theta}
\bigr).
\end{equation}
Note that the factor ${\mathbb I}_{{\mathbb B}_{\theta}}$ in the right
hand side is non zero only in the case where
 ${\overline z}_{[\theta]}=(z_1,\dots, z_h)$ satisfies
$|z_i|\le w/\sqrt n$, for $i=1,\dots, h$.

Introduce the $L^r$ valued random variables
$$
U_i=N^{-1/2}\sum_{j\in O_{i}}\psi(\cdot,Y_j),
\qquad
i=1,\dots, h,
$$
and the regular conditional probability
$$
P({\overline z}_{[\theta]};{{\A}})
=
\E
\bigl(
{\mathbb I}_{\{(U_{1},\dots, U_h)\in {{\A}}\}}
\,
\bigr|
\,
{\overline g}_{[\theta]}={\overline z}_{[\theta]}%
\bigr).
$$
Here ${\A}$ denotes a Borel subset of
$L^r\times\dots\times L^r$ ($h$-times).
By independence, there exist regular conditional probabilities
\begin{equation}\label{3.53}
P_i(z_i;\,{{\A}}_i)
=
\E({\II}_{U_i\in {{\A}}_i}\,\bigr|\, g_i=z_i),
\qquad
i=1,\dots, h,
\end{equation}
such that for Borel subsets ${{\A}}_i$ of $L^r$ we have
$$
P({\overline z}_{[\theta]}; {{\A}}_1\times\cdots\times {{\A}}_h)
=
\prod_{1\le i\le h}P_i(z_i; {{\A}}_i).
$$
In particular, for every ${\overline z}_{[\theta]}$, the regular conditional
probability
$P({\overline z}_{[\theta]};\cdot)$ is the (measure theoretical)
 extension of the product of the regular
conditional probabilities (\ref{3.53}).
Therefore, denoting by
$\psi_i$ a random variable with values in $L^r$ and
with the distribution
\begin{equation}\label{3.54}
\P\{\psi_i\in {\B} \}=P_i(z_i;{\B}),
\qquad
{\B}\subset L^r - {\text{Borel set}},
\end{equation}
we obtain that the distribution of the sum
\begin{equation}\label{3.55}
\zeta=\psi_1+\dots+\psi_h
\end{equation}
of independent
 random variables $\psi_1,\dots, \psi_h$
is the regular conditional distribution of $S_{\theta}$, given
${\overline g}_{[\theta]}={\overline z}_{[\theta]}$.
In particular, the expectation in the right hand side of (\ref{3.52}) equals
$\delta_{\e_m}(f_{\theta}+\zeta)$, where
\begin{equation}\label{3.56}
\delta_s(f_{\theta}+\zeta)
:=
\E_\zeta
d_s(f_{\theta}+\zeta, J_p'),
\qquad
s>0,
\end{equation}
and where $\E_\zeta$
denotes the conditional expectation given all the random variables, but $\zeta$.
Note that the inequality
\begin{equation}\label{3.57}
\e_m\le \e_*
\end{equation}
 implies
\begin{equation}\label{3.58}
 \delta_{\e_m}(f_{\theta}+\zeta)\le \delta_{\e_*}(f_{\theta}+\zeta).
\end{equation}
We shall apply Lemma \ref{L4.1} to  construct an upper bound  for
$\delta_{\e_*}(f_{\theta}+\zeta)$, where
 $\e_*=$$\mu_* |T_0| N^{-1/2}/20$. The quantity $\mu_*$ is
defined in (\ref{4.4}) and satisfies
$c_*\delta_3^2/n\le \mu_*^2\le c_*'\delta_3^2/n$, by the inequality (\ref{A3.22}).
Note that for $T_0$ satisfying (\ref{3.11}), for some integers $m$ and $n$ as in (\ref{2.4})
and (\ref{3.2}), and for the quantity
$\delta_3$ (see (\ref{3.14})) which  satisfies
\begin{equation}\label{3.59}
\delta_3^2\ge N^{-8\nu},
\end{equation}
 the inequality (\ref{3.57}) holds with $\e_m$ defined by (\ref{3.25}),
 provided that $N$ is sufficiently large ($N>C_*$). Moreover, we have
\begin{equation}\label{3.60}
 \e_*^2\le c_*\delta_3^2N^{-48\nu}.
\end{equation}
In order to apply Lemma \ref{L4.1} we invoke the
moment inequalities of Lemma \ref{LA3.3}. Now Lemma \ref{L4.1} shows that
\begin{equation}\label{3.61}
\delta_{\e_*}(f_{\theta}+\zeta)
\le
c_*\kappa_*^{1/2}\e_*^{(r-2)/(2r)}+c_*N^{-2},
\end{equation}
where the number $\kappa_*$, defined in (\ref{4.4}), satisfies
 $\kappa_*\le c_*\delta_3^{-r/(r-2)}$, by (\ref{A3.23}).
Denote ${\tilde r}=r^{-1}+(r-2)^{-1}$.
It follows from (\ref{3.61}), (\ref{3.60}) and (\ref{3.58}), for $r>4$, that
\begin{eqnarray}\nonumber
\delta_{\e_m}(f_{\theta}+\zeta)
&&
\le
c_*\delta_3^{-{\tilde r}}N^{-6\nu}+c_*N^{-2}
\\
\label{3.62}
&&
\le
c_*(1+\delta_3^{-{\tilde r}})N^{-6\nu}
\le
c_*{\R}.
\end{eqnarray}
In the last step we used the simple bound $\delta_3^2\le c_*$, see (\ref{A3.5}), and the inequality
$1+\delta_3^{-{\tilde r}}\le 2+\delta_3^{-1}$, which follows from ${\tilde r}<1$.
Note that (\ref{3.62}) and  (\ref{3.52}), (\ref{3.56})
 imply (\ref{3.51}) thus completing the proof of the first inequality
(\ref{3.32}).

\smallskip
{\it Step B.} Here we prove the second bound of (\ref{3.32}).
It is convenient to write the $L^r$-valued random variable (\ref{3.27}) in
the form
\begin{eqnarray}\nonumber
&&
S=U_1+\dots+U_{n-1}+U_n=:S'+U_n,
\\
\label{3.63}
&&
U_i=N^{-1/2}\sum_{j\in O_i}\psi(\cdot,Y_j).
\end{eqnarray}
Observe that $U_1,\dots, U_{n-1}$ are independent and identically
distributed  $L^r$-valued random variables.

 We are going to apply
Lemma \ref{L4.1} conditionally, given $U_n$,  to the probability
$$
\P\{\BB\}=\E {\tilde p}(U_n),
\qquad
{\tilde p}(f)=\E\bigl(d_{\e_m}(S'+f,\,N^{-1/2}J_p)\bigl| U_n=f\bigr).
$$
Lemma \ref{LA3.2} shows that $U_1,\dots, U_{n-1}$ satisfy the moment
conditions of Lemma \ref{L4.1}, but now the corresponding quantity
$\mu_*$, see (\ref{4.4}),
satisfies $c_*\delta_3^2/n\le \mu^2_*\le c_*'/n$, by (\ref{A3.6}). This
implies the bound $\e_*\le c_*N^{-48\nu}$ instead of (\ref{3.60}). As a
result we obtain a different  power of $\delta_3$ in the upper
bound below.
Proceeding as in proof of (\ref{3.62}), see (\ref{3.58}), (\ref{3.60}), (\ref{3.61}),
we obtain
$$
{\tilde p}(f)\le c_*(1+\delta_3^{-r/2(r-2)})N^{-6\nu}
\le
c_*{\R}.
$$
In the last step we used the inequality $1+\delta_3^{-r/2(r-2)}\le 2+\delta_3^{-1}$.
This inequality follows from $r/2(r-2)< 1$, for $r>4$.
Therefore, we have $\P\{\BB\}\le \E {\tilde p}(U_n)\le c_*{\R}$, where
${\R}$ is defined in (\ref{3.43}).
This completes the proof of the second inequality  in (\ref{3.32}).

\bigskip
{\bf 3.4.} Here we prove the bound $|I_3|\le c_*N^{-1-\nu}$, see (\ref{2.11}).
It follows from (\ref{3.21}) that
\begin{equation}\label{3.64}
|I_3|\le \int_{t\in {\mathbb J}}\frac{\E |\alpha_t^m|}{|t|}dt+c_*N^{-1-\nu},
\end{equation}
where ${\mathbb J}=\{t:\, \sqrt N/10^3\beta_3\le |t|\le N^{1-\nu}\}$.
For the $L_r-$valued random element $S$ defined in (\ref{3.27}) and the event
${\mathbb S}=\{\|S\|_r<N^{\nu/10}\}$ write
\begin{equation}\label{3.65}
\E|\alpha_t^m|
\le
\E{\mathbb I}_{{\mathbb S}}|\alpha_t^m| +\E(1-{\mathbb I}_{\mathbb S}).
\end{equation}
Estimate the  second summand  as
$\P\{\|S\|_r\ge N^{\nu/10}\}\le c_*N^{-3}$, by
Lemma \ref{LA5.1}. Furthermore, expanding the exponent in $\alpha_t$  we obtain
\begin{equation}\nonumber
{\mathbb I}_{\mathbb S}|\alpha_t|
\le
|\E \exp\{itN^{-1/2}g(X_1)\}|+ {\mathbb I}_{\mathbb S}|t|N^{-1}\|S\|_1.
\end{equation}
It follows from (\ref{1.5}) that the first summand is bounded from above by
$1-v$,  for some $v>0$ depending on $A_*,M_*,D_*, \delta$ only,
 see the proof of (\ref{3.9}).
Furthermore,  the second summand is bounded from above by
$N^{-9\nu/10}$ almost surely. Therefore, for sufficiently large $N>C_*$
we have ${\mathbb I}_{\mathbb S}|\alpha_t|\le 1-v/2$ uniformly in $N$.
Invoking this bound in (\ref{3.65}) we obtain
$\E|\alpha_t^m|\le (1-v/2)^m+c_*N^{-3}\le c_*N^{-3}$, for $m$ satisfying (\ref{2.4}).
Finally,  we obtain that the integral in (\ref{3.64}) is bounded from above by
$c_*N^{-2}$ thus completing the proof.


\section{Combinatorial concentration bound}

We start the section by introducing some notation and collecting
auxiliary inequalities. Then we formulate and prove Lemmas \ref{L4.1}
and \ref{L4.2}.

Introduce the number
\begin{equation}\label{4.1}
\delta_2
=
\min
\bigl\{
 \frac{1}{12c_g},
\frac{(c_r\|g\|_2^2/2^r\|g\|_r^r)^{1/(r-2)}}{1+4/\|g\|_2}
\bigr\},
\end{equation}
where $c_g=1+\|g\|_r/\|g\|_2$ and $c_r=(7/24)2^{-(r-1)}$.
Denote
$$
\rho^*=1-\sup\{|\E e^{itg(X_1)}|: 2^{-1}\delta_2\le |t|\le N^{-\nu+1/2}\}.
$$
It follows from the identity
$\rho^*
=
\rho(2^{-1}\sigma \delta_2,\, \sigma N^{-\nu+1/2})$
and the simple inequality $a_1\le \delta_2/4$, see (\ref{3.8}),
that
$\rho^*
\ge
\rho(2 \sigma \,a_1, \sigma N^{-\nu+1/2})$.
Furthermore, it follows from (\ref{A1.4}) and the assumption
$\sigma_\TT^2=1$ that $1/2<\sigma<2$ for sufficiently large $N$
($N>C_*$). Therefore,
$\rho^*
\ge
\rho(a_1,2N^{-\nu+1/2})
\ge \delta'$,
where the last inequality follows from (\ref{3.9}).
We obtain, for $N>C_*$,
\begin{equation}\label{4.2}
1-\sup\{|\E e^{itg(X_1)}|: 2^{-1}\delta_2\le |t|\le N^{-\nu+1/2}\}
\ge \delta',
\end{equation}
where the number $\delta'$ depends on $A_*,D_*,M_*,\nu_1, r,s,\delta$ only.
In what follows we use the notation $c_0=10$.
Let $L_0^r= \{y\in L^r: \int_{\X} y(x)P_X(dx)=0\}$
 denotes a subspace of $L^r$.
Observe, that $\E g(X_1)=0$ implies $y^*(=p_g(y))\in L_0^r$, for
 every $y\in L_0^r$.

\bigskip
{\bf 4.1.} Let $\psi_1,\dots, \psi_n$ denote independent random vectors
with values in $L_0^r$. For $k=1,\dots, n$, write
$$
\zeta_k
=
\psi_1+\dots+\psi_k \qquad {\text{and}}
\qquad
\zeta=\zeta_n.
$$
Let $\overline \psi_i$ denote an
independent copy of $\psi_i$. Write $\psi_i^*=p_g(\psi_i)$  and
$\overline \psi_i^*=p_g({\overline \psi}_i)$, see (\ref{3.4}). Introduce  random
vectors
$$
\tilde \psi_i
=
2^{-1}(\psi_i-\overline \psi_i),
\qquad
\tilde \psi_i^*
=
2^{-1}(\psi_i^*-\overline \psi_i^*),
\qquad
\hat \psi_i
=
2^{-1}(\psi_i+\overline \psi_i).
$$
We shall assume that, for some
$c_A\ge c_D\ge c_B>0$,
\begin{equation}\label{4.3}
n^{r/2}\E\|{\tilde \psi}_i\|_r^r\le c_A^r,
\qquad
c_B^2
\le
n\, \E\|{\tilde \psi}^*_i\|_2^2
\le
c_D^2,
\end{equation}
for every $1\le i\le n$. Furthermore, denote
$\mu_i^2=\E\|{\tilde \psi}^*_i\|_2^2$
and
${\tilde \kappa}_i^{r-2}=\frac{8}{3}\frac{\E\|{\tilde \psi}_i\|_r^r}{\mu_i^r}$,
\begin{equation}\label{4.4}
\mu_*=\min_{1\le i\le n}\mu_i,
\qquad
\kappa_*=\max_{1\le i\le n}{\tilde \kappa}_i.
\end{equation}
Observe that, by H\"older's inequality and (\ref{3.5}), we have
$\kappa_i>1$, for $i=1,\dots, n$.

\begin{lem}\label{L4.1}
Let $4<r\le 5$  and $0<\nu<10^{-2}(r-4)$. Assume that $n\ge N^{5\nu}$.
 Suppose that
\begin{equation}\label{4.5}
\kappa_*^4\le\frac{9}{256}\frac{n}{\ln N}.
\end{equation}
Assume that (\ref{4.2}), (\ref{4.3}) as well as (\ref{4.13}), (\ref{4.19}) (below) hold.
There exist a constant $c_*>0$ which depends on $r,s,\nu, A_*, D_*, M_*, \delta$ only
such that
for every  $T_0$ satisfying (\ref{3.11}) we have
\begin{equation}\label{4.6}
\delta_{\e_*}(f+\zeta, I(T_0))
\le
c_*(C_D/C_B)^{1/2}\kappa_*^{1/2}{\e_*}^{(r-2)/2r}+c_*N^{-2},
\end{equation}
for an arbitrary non-random element $f\in L_0^r$. Here
$
{\e_*}=\frac{\mu_*}{2c_0}\frac{|T_0|}{\sqrt N}$. The function
$\delta_s(\cdot, I(T_0))$, is defined in (\ref{3.13}).
\end{lem}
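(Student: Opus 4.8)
The plan is to convert the assertion into a concentration inequality for the sum $\zeta^*=\psi_1^*+\dots+\psi_n^*$ of independent vectors in $L_0^r$ and then to apply Kleitman's multidimensional Littlewood--Offord bound. Recall that $\delta_{\e_*}(f+\zeta,I(T_0))=\E\,\II_{\{v^2(f+\zeta)>1-\e_*^2\}}\,\II_{\{\|f+\zeta\|_r\le N^\nu\}}$, so it suffices to estimate the probability of the intersection of these two events. The first step is to turn ``$|u_t(f+\zeta)|$ close to $1$ for some $t\in I(T_0)$'' into a near-lattice condition on the phase. Writing $\phi_{t,h}(x)=t\bigl(g(x)+N^{-1/2}h(x)\bigr)$ and using $1-\cos\theta\ge c_*\,\mathrm{dist}(\theta,2\pi\mathbb Z)^2$, from $1-|u_t(h)|^2=\E\E'\bigl(1-\cos(\phi_{t,h}(X_1)-\phi_{t,h}(X_1'))\bigr)$ (with $X_1'$ an independent copy of $X_1$) one extracts a real shift $a=a(t,h)$ with $\E\,\mathrm{dist}\bigl(\phi_{t,h}(X_1)-a,\,2\pi\mathbb Z\bigr)^2\le c_*\bigl(1-|u_t(h)|^2\bigr)$. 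Decomposing $h=f+\zeta=a_hg+h^*$ as in (\ref{3.4}), with $h^*=f^*+\zeta^*$, $\zeta^*=\sum_{i=1}^n\psi_i^*$ and $|a_h|\le\|h\|_r/\|g\|_2\le N^\nu/\sqrt{A_*}$ on the norm event, one has $\phi_{t,h}=\widetilde t\,g+\lambda h^*$ with $\lambda=tN^{-1/2}\in[N^{-\nu},N^{\nu}]$ by (\ref{3.11}) and $\widetilde t=t(1+N^{-1/2}a_h)$ differing from $t$ only by a relative $O(N^{-1/2+\nu})$. Hence the event defining $d_{\e_*}$ forces $\zeta^*$ into the set $\A$ of those $y\in L_0^r$ with $\|y\|_r\le 2N^\nu$ for which, for some $t\in I(T_0)$ and some $a\in\RR$, $\E\,\mathrm{dist}\bigl(\widetilde t\,g(X_1)+\lambda(f^*+y)(X_1)-a,\,2\pi\mathbb Z\bigr)^2\le c_*\e_*^2$.

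The core geometric claim is that $\A$ is \emph{thin}, i.e. that it admits a covering by a controlled number of $L^2(P_X)$-balls of radius comparable to $\e_*$, which is precisely the typical scale $\mu_*\lambda$ of a single summand $\lambda\psi_i^*$ (note $\e_*=\mu_*\lambda/(2c_0)$). Here Cram\'er's condition (\ref{4.2}) is essential: since $|\E e^{itg(X_1)}|$ stays bounded away from $1$ for every admissible frequency, the ``pure'' phase $\widetilde t\,g$ cannot itself be $L^2$-close to any coset $a+2\pi\mathbb Z$, so that for a fixed pair $(t,a)$ two elements $y,y'$ of $\A$ must have $\lambda(y-y')$ close, modulo $2\pi$ and in $L^2$, to $0$; using in addition the $L^r$-truncation $\|y\|_r\le 2N^\nu$ and that $I(T_0)$ is an interval of relative width $\le 1/10$ (so that letting $t$ and $a$ run over suitably fine grids only multiplies the covering number by a bounded factor) one obtains a covering of $\A$ by $B\le c_*\,\e_*^{-(r+2)/2r}$ balls of radius $\sim\e_*$, with $c_*$ depending on $\delta,A_*,M_*,r$. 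Establishing this covering estimate with the correct power of $\e_*$, and doing so rigorously in the function-space setting --- in particular controlling the pointwise ``mod $2\pi$'' ambiguity, where the truncation $\|\psi(\cdot,Y_j)\|_r\le N^{\alpha}$ and the independence of the $\psi_i^*$ are used --- is the technical heart of the lemma and is the step I expect to be the main obstacle.

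Granting the thinness of $\A$, I would apply Kleitman's concentration bound to $\zeta^*=\sum_{i=1}^n\psi_i^*$. First symmetrize: with $\bar\psi_i$ independent copies of $\psi_i$ and $\tilde\psi_i^*=\tfrac12(\psi_i^*-\bar\psi_i^*)$, the probability that $\zeta^*$ lies in a fixed ball is comparable to the corresponding probability for $\sum_i\tilde\psi_i^*$. By (\ref{4.3}) one has $\E\|\tilde\psi_i^*\|_2^2=\mu_i^2\ge c_B^2/n$, and a Paley--Zygmund argument governed by $\kappa_i$ --- comparing the $r$-th moment $\E\|\tilde\psi_i\|_r^r$ with $\mu_i^r$ via $\|\tilde\psi_i^*\|_2\le\|\tilde\psi_i\|_r$ --- yields $\P\{\|\tilde\psi_i^*\|_2\ge\mu_i/2\}\ge c_*\kappa_i^{-1}$; then condition (\ref{4.5}) lets Hoeffding's inequality guarantee that at least $c_*n/\kappa_*$ of the $\tilde\psi_i^*$ have norm $\ge\mu_*/2$ with probability $1-O(N^{-2})$. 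On that event Kleitman's theorem bounds, uniformly over centres, the probability that $\sum_i\tilde\psi_i^*$ falls in any $L^2$-ball of radius $\sim\mu_*\lambda$ by $c_*(C_D/C_B)^{1/2}(\kappa_*/n)^{1/2}$; summing over the $B$ covering balls of $\A$ and adding the $O(N^{-2})$ symmetrization and grid errors gives $\delta_{\e_*}(f+\zeta,I(T_0))\le c_*(C_D/C_B)^{1/2}\kappa_*^{1/2}\,B\,n^{-1/2}+c_*N^{-2}$.

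It remains to recognise the right-hand side as (\ref{4.6}). Using $\e_*=\mu_*\lambda/(2c_0)$ together with $c_B^2/n\le\mu_*^2\le c_D^2/n$, one has $n^{-1/2}\le c_*(C_D/C_B)^{1/2}\e_*$ up to the $N^{\pm\nu}$ coming from $\lambda$, which is absorbed by the polynomial slack available since $n\ge N^{5\nu}$; combined with $B\le c_*\e_*^{-(r+2)/2r}$ this gives $B\,n^{-1/2}\le c_*(C_D/C_B)^{1/2}\e_*^{\,1-(r+2)/2r}=c_*(C_D/C_B)^{1/2}\e_*^{(r-2)/2r}$, which is (\ref{4.6}). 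The exponent $(r-2)/2r=\tfrac12-\tfrac1r$ is exactly the value at which the two opposing effects balance: a coarser covering of $\A$ is itself smaller, but the $L^r$-truncation then forces more balls into the union. Throughout, the auxiliary hypotheses (\ref{4.13}) and (\ref{4.19}) are invoked to ensure that the regular conditional distributions used in the symmetrization and covering arguments are well defined and that the (truncated) summands still satisfy (\ref{4.3}).
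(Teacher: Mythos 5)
Your opening and closing steps do overlap with the paper's argument: the Paley--Zygmund selection of coordinates with $\|\tilde\psi_i^*\|_2\ge\mu_i/2$ (using (\ref{4.3}), (\ref{4.5}) and Hoeffding, up to an $N^{-2}$ exception), the symmetrization $\psi_i=\alpha_i\tilde\psi_i+\hat\psi_i$ with Rademacher signs, and the appearance of Kleitman are all there. But the heart of your proof --- the claim that the bad set $\A=\{y:\ \|y\|_r\le 2N^{\nu},\ v^2(f+y)>1-\e_*^2\}$ can be covered by $B\le c_*\e_*^{-(r+2)/2r}$ balls of radius $\sim\e_*$ in $L^2$ --- is exactly the step you flag as an obstacle, and it is a genuine gap, not a technicality. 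What the Cram\'er condition actually yields (this is the content of Lemma \ref{L4.2}) is a \emph{local} statement: two points of $\A$ whose difference is small in $L^r$ (of order $\delta_2T^*\e_*^{2/r}$) cannot be $L^2$-separated at scale $c_0T^*\e_*$ after projection by $p_g$. This gives ``thinness in $L^2$ inside small $L^r$-balls'', but $\A$ sits inside an $L^r$-ball of radius $N^{\nu}$ in an infinite-dimensional space, and nothing bounds the number of such small $L^r$-balls needed; so no covering bound of the form $\e_*^{-(r+2)/2r}$ follows, and the exponent in (\ref{4.6}) cannot be recovered this way. In addition, your discretization of the frequency is wrong: $I(T_0)$ has length $\delta_1N^{1/2-\nu}$ and the resolution needed to stabilize $u_t$ is of order $\e_*$, so the grid over $t$ contributes a factor of order $N^{1/2-\nu}/\e_*$, not a bounded one; the supremum over $t\in I(T_0)$ (two different points of $\A$ may be ``bad'' at two different frequencies $s,t$) has to be handled head-on, as Lemma \ref{L4.2} does by treating the pair $(s,t)$ with the cases $|s-t|<\delta_2$ (via Lemma \ref{LA4.1}\,\textbf{a)}) and $\delta_2\le|s-t|$ (via (\ref{4.2})).

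The paper's route avoids any covering of $\A$: after conditioning on $\tilde\psi_i,\hat\psi_i$, only the $2^{n_0}$ sign-patterns of a subset $J'\subset J$ of the carefully chosen size $n_0\approx\delta_2\varkappa^{-1}\e_*^{-(r-2)/r}$ matter (this size is dictated by the requirement $\|x_A-x_{A'}\|_r\le 2n_0\varkappa\e_*<2\delta_2\e_*^{2/r}$, i.e.\ by the hypotheses of Lemma \ref{L4.2}, and this is where the exponent $(r-2)/2r$ really comes from). Kleitman's theorem is used not as a small-ball probability for the whole sum but to partition the $2^{n_0}$ subsets into at most $\binom{n_0}{\lfloor n_0/2\rfloor}$ sparse classes with pairwise $L^2$-separation $c_0\e_*$ of the projected signed sums $x_A^*$; Lemma \ref{L4.2} then shows each class contains at most one admissible pattern, so the conditional probability over the signs is at most $2^{-n_0}\binom{n_0}{\lfloor n_0/2\rfloor}\le cn_0^{-1/2}\le c_*\kappa_*^{1/2}\e_*^{(r-2)/(2r)}$, which is (\ref{4.6}). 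To repair your proposal you would have to replace the global covering of $\A$ by this ``at most one admissible point per sparse class'' argument restricted to the signed sums themselves.
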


In {\it Step A.2.3} of Section 3 we apply this lemma to random
vector $\zeta=\psi_1+\dots+\psi_h$, see (\ref{3.55}). In {\it Step B} of
Section 3 we apply this lemma to the random vector $S'$, see
(\ref{3.63}).


{\it Proof of Lemma \ref{L4.1}.}
We shall consider the case where $T_0>0$. For $T_0<0$ the proof
is the same. We can assume without loss of generality that
$c_0<N^{\nu}$.
Denote
$X=\Vert \tilde \psi_i^*\Vert_2$ and
$Y=\Vert \tilde \psi_i\Vert_r$ and $\mu=\mu_i$,
$\kappa={\tilde \kappa}_i$.
By (\ref{3.5}), we have $Y\ge X$.

{\it Step $1$.} Here we construct the bound (\ref{4.7}), see below, for the probability
$\P\{B_i\}$,
where
$$
B_i=\{X\ge \mu/2,\, Y<\kappa\mu\}.
$$
Write
\begin{align*}
&
\mu^2=\E X^2
=
\E X^2I_A+\E X^2I_{B_i}+\E X^2I_D,
\\
&
A
=\{X<\mu/2\},
\qquad
D=\{X\ge \mu/2,\, Y\ge \kappa\mu\}.
\end{align*}
Substitution of the bounds
\begin{align*}
\E X^2I_A
&
\le \frac{\mu^2}{4},
\\
\E X^2I_{B_i}
&
\le \E Y^2I_{B_i}\le (\kappa\mu)^2\P\{B_i\},
\\
\E X^2I_D
&
\le
 \E Y^2I_{\{Y\ge \kappa\mu\}}
\le
(\kappa\mu)^{2-r}\E Y^r
\end{align*}
gives
$$
\mu^2\le 4^{-1}\mu^2+\kappa^2\mu^2\P\{B_i\}+(\kappa\mu)^{2-r}\E Y^r.
$$
Finally, invoking the identity $\kappa^{r-2}=(8/3)\E Y^r/\mu^r$ we obtain
\begin{equation}\label{4.7}
\P\{B_i\}
\ge
\frac{3}{4\kappa^2}-\frac{\E Y^r}{(\kappa\mu)^{r}}
=
\frac{3}{4\kappa^2}\bigl(1-\frac{4\E Y^r}{3\mu^r\kappa^{r-2}}\bigr)
=
\frac{3}{8\kappa^2}
\ge
\frac{3}{8\kappa_*^2}=:p
\end{equation}

Introduce the  (random) set
$J=\{i:\, B_i\ {\text{occurs}}\}\subset \{1,\dots, n\}$.
 Hoeffding's inequality applied to the random variable $|J|=\II_{B_1}+\dots+\II_{B_n}$
 shows
\begin{equation}\label{4.8}
\P\{|J|\le \rho n\}
\le
 \exp\{-np^2/2\}
\le
N^{-2},
\qquad
\rho:=p/2=(3/16)\kappa_*^{-2}.
\end{equation}
In the last step we invoke  (\ref{4.5}) and use (\ref{4.7}).

{\it Step $2$}. Here we introduce randomization.
Note that for any $\alpha_i\in \{-1, +1\}$, $i=1,\dots, n$,
the distributions of the random vectors
$$
(\psi_1,\dots, \psi_n)
\qquad
{\text{ and}}
\qquad
\bigl(
\alpha_1\tilde\psi_1+\hat \psi_1,
\dots,
\alpha_n\tilde\psi_n+\hat \psi_n
\bigr)
$$
coincide.
Therefore, denoting
$$
{\tilde \zeta}_n=\alpha_1\tilde\psi_1+\dots+\alpha_n\tilde \psi_n,
\qquad
{\hat \zeta}_n={\hat \psi}_1+\dots+{\hat \psi}_n,
$$
we have for $s>0$,
$$
\delta_{s}(f+ \zeta, I(T_0))
=
 \delta_{s}(f+{\tilde \zeta}_n+{\hat \zeta}_n,
I(T_0)),
$$
for every choice of $\alpha_1,\dots, \alpha_n$. From now on let
$\alpha_1,\dots, \alpha_n$ denote a sequence of independent identically distributed
 Bernoulli random variables independent of ${\tilde \psi}_i, {\hat \psi}_i$, $1\le i\le n$,
and with probabilities $\P\{\alpha_1=1\}=\P\{\alpha_1=-1\}=1/2$. Denoting by
$\E_\alpha$
the expectation with respect to the sequence $\alpha_1,\dots,
\alpha_n$ we obtain
\begin{equation}\label{4.9}
\delta_{s}(f+\zeta,I(T_0))
=
\E_{\alpha}\delta_{s}(f+{\tilde \zeta}_n+{\hat \zeta}_n,I(T_0)).
\end{equation}
We are going to condition on ${\tilde \psi}_i$ and ${\hat
\psi}_i$, $1\le i\le n$, while taking expectations with respect to $\alpha_1,\dots,
\alpha_n$.
It follows from (\ref{4.8}), (\ref{4.9}) and the fact that the random variable $|J|$ does not
depend on $\alpha_1,\dots, \alpha_n$ that
\begin{equation}\label{4.10}
\delta_{s}(f+\zeta, I(T_0))
\le
\E\II_{\{|J|\ge \rho n\}}
\gamma_s({\tilde \psi}_i,\,  {\hat \psi}_i, \, 1\le i\le n)+N^{-2},
\end{equation}
where
$$
\gamma_s( {\tilde \psi}_i,\,  {\hat \psi}_i, \, 1\le i\le n)
=
\E_{\alpha}\II_{\{|J|\ge \rho n\}}
\II_{ \{ v^2(f+{\tilde \zeta}_n+{\hat \zeta}_n) > 1-s^2    \} }
\II_{ \{ \|f+{\tilde \zeta}_n+{\hat \zeta}_n\|_r\le N^{\nu} \} }
$$
denotes the conditional expectation given ${\tilde \psi}_i,\, {\hat \psi}_i$, $1\le i\le n$.
Note that (\ref{4.6}) is a consequence of (\ref{4.10}) and of the bound
\begin{equation}\label{4.11}
\gamma_{\e_*}({\tilde \psi}_i,\,  {\hat \psi}_i, \, 1\le i\le n)
\le
c_*\kappa_*^{1/2}\e_*^{(r-2)/(2r)}.
\end{equation}

Let us prove this bound.
Introduce the integers
$$
n_0=l-1
\qquad
\qquad
l=\lfloor\delta_2\varkappa^{-1}\e_*^{-(r-2)/r}\rfloor,
\qquad
\qquad
\varkappa=2c_0(C_D/C_B)\kappa_*.
$$
Let us show that
\begin{equation}\label{4.12}
n_0\le \rho n.
\end{equation}
It follows from the inequalities
$$
\e_*^{-1}\le 2\frac{c_0}{c_B}N^{ \nu}n^{1/2},
\quad
N^{\nu(r-2)/r}\le N^{\nu}\le n^{1/r},
\quad
\delta_2\le \frac{3}{16}\bigl(\frac{3}{8}\bigr)^{1/(r-2)}
$$
that
$$
l
\le
\frac{\delta_2}{C_D}\frac {1}{k_*}
\bigl(\frac{C_B}{2c_0}\bigr)^{2/r}
\bigl(N^{\nu}n^{1/2}\bigr)^{(r-2)/r}
\le
\frac{3}{16}\frac{1}{k_*}\frac{C_B^{2/r}}{C_D}n^{1/2}.
$$
Note that (\ref{4.5}) implies $k_*\le n^{1/4}$. Therefore, the
inequality
\begin{equation}\label{4.13}
C_B^{2/r}C_D^{-1}\le n^{1/4}
\end{equation}
implies $l\le (3/16)k_*^{-2}n=\rho n$. We obtain (\ref{4.12}).

Given ${\tilde \psi}_i,\, {\hat \psi}_i$, $1\le i\le n$,
consider the corresponding set $J$, say $J=\{i_1,\dots, i_k\}$.
Assume that  $k\ge \rho n$. From the  inequality $\rho n\ge n_0$, see
(\ref{4.12}), it follows
that we can choose a subset $J'\subset J$ of size $|J'|=n_0$.
Split
$$
{\tilde \zeta}_n
=
\sum_{i\in J'}\alpha_i{\tilde \psi}_i
+
\sum_{i\in J\setminus J'}\alpha_i{\tilde \psi}_i
=:\zeta_*+\zeta'
$$
and denote $f+\zeta'+{\hat \zeta}_n=f_*$. Note that $f_*\in L_0^r$ almost surely.
The bound (\ref{4.11}) would follow if we show that
\begin{eqnarray}\label{4.14}
{\tilde \delta}
&
\le
& 
c_*\kappa_*^{1/2}\e_*^{(r-2)/(2r)},
\\
\nonumber
{\tilde\delta} & :=
&
\E' \II_{ \{ v^2(f_*+\zeta_*) > 1-\e_*^2    \}}
 \II_{ \{ \|f_*+\zeta_*\|_r\le N^{\nu} \} }.
\end{eqnarray}
Here $\E'$ denotes the conditional expectation given all the random
variables,
\linebreak
 but $\{\alpha_i,\, i\in J'\}$.
 
\smallskip
{\it Step $3$}. Here we prove (\ref{4.14}).
Note that for $j\in J'$ the vectors
$$
x_j=T_0N^{-1/2}{\tilde \psi}_j
\qquad
{\text{ and}}
\qquad
x_j^*=p_g(x_j)=T_0N^{-1/2}{\tilde \psi}_j^*
$$
satisfy
\begin{equation}\label{4.15}
\Vert x_j^*\Vert_2\ge c_0\e_*,
\qquad
\Vert x_j\Vert_r\le \varkappa\e_*,
\qquad
\varkappa=2c_0(C_D/C_B)\kappa_*.
\end{equation}
Given $A\subset J'$ denote
$$
x_A=\sum_{i\in A}x_i-\sum_{i\in J'\setminus A}x_i,
\qquad
x_A^*=p_g(x_A).
$$
 We are going to
apply Kleitman's theorem on  symmetric partitions
(see, e.g. the proof of Theorem 4.2, Bollobas (1986))
to the sequence $\{x_j^*,\, j\in J'\}$ in $L^2$.
Since for $j\in J'$ we have
$\Vert x_j^*\Vert_2\ge c_0\e_*$, it follows from Kleitman's theorem
that the collection ${\cal P}(J')$ of all subsets of $J'$ splits into
non-intersecting non-empty classes
${\cal P}(J')={\cal D}_1\cup\cdots\cup{\cal D}_s$, such that the
corresponding sets of linear combinations
$
V_t
=
\bigl\{
x^*_A,\, A\in {\cal D}_t
\bigr\}$,
$t=1,2,\dots, s$,
are sparse, i.e., given $t$, for $A,A'\in {\cal D}_t$ and $A\not=A'$ we have
\begin{equation}\label{4.16}
\|x_A^*-x_{A'}^*\|_2\ge c_0\e_*.
\end{equation}
Furthermore, the
number of classes $s$ is bounded from above by $\binom{n_0} {\lfloor n_0/2\rfloor}$.


Next, using Lemma \ref{L4.2} we shall show that  given $f_*$ the class ${\cal D}_t$
 may contain at most
one element $A\in {\cal D}_t$  such that
\begin{equation}\label{4.17}
v^2(f_*+{\tilde x}_A)> 1-\e_*^2,
\qquad
\|f_*+{\tilde x}_A\|_r\le N^{\nu},
\qquad
{\tilde x}_A:=N^{1/2}T_0^{-1}x_A.
\end{equation}
This means that there are at
most $\binom{n_0}{\lfloor n_0/2\rfloor}$ different subsets $A\subset J'$
for which (\ref{4.17}) holds. This implies (\ref{4.14})
$$
{\tilde \delta}
\le
2^{-n_0}\binom{n_0}{\lfloor n_0/2\rfloor}\le cn_0^{-1/2}
=
c\delta_2^{-1/2} \varkappa^{1/2}\e_*^{\frac{r-2}{2r}}.
$$
Finally, (\ref{4.6}) follows  from (\ref{4.10}), (\ref{4.11}),  (\ref{4.14}).

Given $f_*\in L_0^r$ let us show that
there is no pair $A,\,A'$ in ${\cal D}_t$ which satisfy (\ref{4.17}).
Fix $A,A'\in {\cal D}_t$. We have, by (\ref{4.15}) and the choice of $n_0$,
$$
\|x_A-x_{A'}\|_r
\le
2\sum_{i\in J'}\|x_i\|_r
\le
2n_0\varkappa\e_*<2\delta_2\e_*^{2/r}.
$$
Denoting $S_{A}=f_*+{\tilde x}_A$ and
$S_{A'}=f_*+{\tilde x}_{A'}$ we obtain
\begin{equation}\label{4.18}
\|S_A-S_{A'}\|_r
=
N^{1/2}T_0^{-1}\|x_A-x_{A'}\|_r
\le
2\delta_2\e_*^{2/r}N^{1/2}T_0^{-1}.
\end{equation}
Assume that $S_A$ and $S_{A'}$ satisfy the second inequality of (\ref{4.17}), i.e.,
$\|S_A\|_r\le N^{\nu}$ and $\|S_{A'}\|_r\le N^{\nu}$.
We are going to apply Lemma \ref{L4.2} to the vectors $S_A$ and $S_{A'}$.
In order to check the conditions of
Lemma \ref{L4.2} note that  (\ref{4.21}) and (\ref{4.22}) are verified by
(\ref{4.15}), (\ref{4.16}) and (\ref{4.18}). Furthermore, the inequalities $c_0<N^{\nu}$  and
\begin{equation}\label{4.19}
c_B\ge 2N^{4\nu}(n/N)^{1/2},
\end{equation}
 imply
$ N^{2\nu-1/2}\le \e_*$. Finally, we can assume without loss
of generality that $\e_*\le c_*':=\min\bigl\{( \delta'/4)^{r/2}, (A_*^{1/2}/6)^{r/2}\bigr\}$.
Otherwise (\ref{4.6})
follows from trivial inequalities
$\delta_{\e_*}\le 1\le (\e_*/c_*')^{(r-2)/2r}\le c_*\e_*^{(r-2)/2r}$
and the inequality $\kappa_*>1$.

Now Lemma \ref{L4.2} shows that
$\min\{v^2(S_A), \, v^2(S_{A'})\}\le 1-\e_*^2$ thus completing the
proof of Lemma \ref{L4.1}.

%

\bigskip
{\bf 4.2.} Here we formulate and prove Lemma \ref{L4.2}. Let us introduce
first
some notation. Given $y\in L^r(=L^r(\X,P_X))$ define the
symmetrization $y_{s}\in L^r(\X\times\X,P_X\times P_X)$ by
$y_{s}(x,x')=y(x)-y(x')$, for $x,x'\in \X$. In what follows $X_1,X_2$ denote independent random
variables with values in $\X$ and with the common distribution $P_X$. By $\E$ we denote the
 expectation taken with respect to $P_X$. 
 For $h\in L^r$ we write
 $$
 \E h=\E h(X_1)=\int_{\X} h(x)P_X(dx),
 \qquad
 \E e^{ih}=\E e^{ih(X_1)}=\int_{\X} e^{ith(x)}P_X(dx).
 $$
 Furthermore, for $2\le p\le r$, denote
 $$
 \|y_s\|_p^p=\E|y(X_1)-y(X_2)|^p,
 \qquad
 \|y\|_p^p=\E|y(X_1)|^p.
 $$

Note that for $y\in L_0^r$ we have $y^*(=p_g(y))\in L_0^r$ and,
therefore,
\begin{equation}\label{4.20}
 \E|y^*(X_1)-y^*(X_2)|^2=2\E|y^*(X_1)|^2.
\end{equation}

Let $y_1,\dots, y_k, f$ be non-random vectors in $L^r$.
We shall assume that these vectors belong to the linear subspace
$L_0^r$.
 Given non random vectors
$\alpha=\{\alpha_i\}_{i=1}^k$ and $\alpha'=\{\alpha'_i\}_{i=1}^k$,
with $\alpha_i, \alpha'_i\in \{-1, +1\}$,
denote
$$
S_{\alpha}=f+\sum_{i=1}^k\alpha_iy_i,
\qquad
S_{\alpha'}=f+\sum_{i=1}^k\alpha'_iy_i.
$$

\bigskip
\begin{lem}\label{L4.2}
Let $\varkappa>0$.
Assume that (\ref{4.2}) holds and suppose that
$$
N^{\nu-1/2}
\le
\e
\le
\min\bigl\{( \delta'/4)^{r/2}, (\|g\|_2/6)^{r/2}
\bigr\}.
$$
Given $T_0$, satisfying (\ref{3.11}), write $T^*=N^{1/2}T_0^{-1}$ and assume that
\begin{equation}\label{4.21}
\Vert y_j^*\Vert_2>c_0T^*\e,
\quad
\Vert y_j\Vert_r\le \varkappa\, T^*\e,
\quad
j=1,\dots, k.
\end{equation}
Suppose that $\Vert S_{\alpha}\Vert_r\le N^{\nu}$ and $\Vert S_{\alpha'}\Vert_r\le
N^{\nu}$ and
\begin{equation}\label{4.22}
\Vert S^*_{\alpha}-S^*_{\alpha'}\Vert_2\ge c_0T^*\e,
\qquad
\|S_{\alpha}-S_{\alpha'}\|_r
\le
2\delta_2T^*\e^{2/r}.
\end{equation}
Then
$\min\{v^2(S_{\alpha}),\, v^2(S_{\alpha'}\}\le 1-\e^2$.
\end{lem}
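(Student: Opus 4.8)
My plan is to argue by contradiction. Suppose $v^2(S_\alpha)>1-\e^2$ and $v^2(S_{\alpha'})>1-\e^2$; since $t\mapsto u_t(f)$ is continuous and $I(T_0)$ is compact, pick $t_1,t_2\in I(T_0)$ with $|u_{t_1}(S_\alpha)|^2>1-\e^2$ and $|u_{t_2}(S_{\alpha'})|^2>1-\e^2$. Write $h_\alpha=g+N^{-1/2}S_\alpha$ (so $u_t(S_\alpha)=\E\exp\{it h_\alpha(X_1)\}$), $h_{\alpha,s}(x,x')=h_\alpha(x)-h_\alpha(x')$, and use throughout the elementary bounds $\tfrac{2}{\pi^2}\,\mathrm{dist}(\theta,2\pi\mathbb Z)^2\le 1-\cos\theta\le\tfrac12\,\mathrm{dist}(\theta,2\pi\mathbb Z)^2$ and $\mathrm{dist}(\theta-\theta',2\pi\mathbb Z)\le\mathrm{dist}(\theta,2\pi\mathbb Z)+\mathrm{dist}(\theta',2\pi\mathbb Z)$.

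First I would show the two characteristic functions nearly coincide on $I(T_0)$: for $t\in I(T_0)$ one has $|t|<2|T_0|$, hence $|t|N^{-1/2}<2/|T^*|$, and with the second bound of \eqref{4.22} this gives $|u_t(S_\alpha)-u_t(S_{\alpha'})|\le|t|N^{-1/2}\|S_\alpha-S_{\alpha'}\|_r<4\delta_2\e^{2/r}$, so that $|u_{t_1}(S_{\alpha'})|^2$ and $|u_{t_2}(S_\alpha)|^2$ also exceed $1-c_*\e^{2/r}$. Symmetrizing, $1-|u_{t_i}(S_\alpha)|^2=\E(1-\cos(t_ih_{\alpha,s}))$ yields $\E\,\mathrm{dist}(t_ih_{\alpha,s},2\pi\mathbb Z)^2\le c_*\e^{2/r}$ for $i=1,2$; subtracting, $1-|u_\tau(h_\alpha)|^2=\E(1-\cos(\tau h_{\alpha,s}))\le c_*\e^{2/r}$ with $\tau:=t_1-t_2$, $|\tau|\le\delta_1 N^{-\nu+1/2}$. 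Decomposing $h_\alpha=\lambda_\alpha g+r_\alpha$ via \eqref{3.4}, with $\lambda_\alpha=1+N^{-1/2}a_{S_\alpha}$ and $r_\alpha=N^{-1/2}S_\alpha^*$, inequalities \eqref{3.6}, \eqref{3.7} and $\|S_\alpha\|_r\le N^\nu$ give $\|r_\alpha\|_r\le c_g N^{\nu-1/2}$, whence
$$\bigl|u_\tau(h_\alpha)-\E e^{i\tau\lambda_\alpha g(X_1)}\bigr|\le|\tau|\,\|r_\alpha\|_1\le\delta_1 c_g\le\delta'/10$$
by \eqref{3.10}; therefore $\bigl|\E e^{i\tau\lambda_\alpha g(X_1)}\bigr|\ge|u_\tau(h_\alpha)|-\delta'/10>1-\delta'$, using $\e\le(\delta'/4)^{r/2}$ and the smallness of $\delta_2$ in \eqref{4.1}. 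Also $|\tau\lambda_\alpha|<2\delta_1 N^{-\nu+1/2}<N^{-\nu+1/2}$.

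This finishes the \emph{separated case}: if $|\tau\lambda_\alpha|\ge\tfrac12\delta_2$, the previous display contradicts the Cram\'er bound \eqref{4.2}. The remaining \emph{close case} $|\tau\lambda_\alpha|<\tfrac12\delta_2$ is the part I expect to be the main obstacle, and it is here that the $g$-orthogonal separation \eqref{4.22} together with the scale normalizations \eqref{4.21} are genuinely needed (they are exactly what the combinatorial construction of Lemma~\ref{L4.1} makes available). The idea is to examine the centred quantity $W:=t_1(h_\alpha-h_{\alpha'})=t_1 N^{-1/2}(S_\alpha-S_{\alpha'})$: on one hand $\|W\|_r\le 4\delta_2\e^{2/r}$ is small while, since $|t_1|N^{-1/2}|T^*|=|t_1|/|T_0|\ge1$, the first bound of \eqref{4.22} forces $\|W^*\|_2\ge c_0\e$; on the other hand $|u_{t_1}(S_\alpha)|^2>1-\e^2$ and $|u_{t_1}(S_{\alpha'})|^2>1-c_*\e^{2/r}$ make $t_1h_\alpha$ and $t_1h_{\alpha'}$, hence $W$, concentrate near a coset $\theta+2\pi\mathbb Z$ with $\E\,\mathrm{dist}(W-\theta,2\pi\mathbb Z)^2\le c_*\e^{2/r}$. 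Because $\E W=0$ and $\|W\|_r$ is small, the coset must be essentially $0$, so $\|W\|_2$ is pushed down to the concentration scale, contradicting $\|W\|_2\ge\|W^*\|_2\ge c_0\e$ once the numerical constants — notably $c_0=10$, the normalizations \eqref{4.21}, and the upper restriction on $\e$ — are balanced against the constants in \eqref{3.8}, \eqref{3.10}, \eqref{4.1}. Carrying out this last estimate quantitatively — controlling precisely the near-coset concentration of $W$ against its mandatory $g$-orthogonal spread $\ge c_0\e$, and verifying the constants leave enough room — is the delicate point of the argument; combining it with the separated case yields the required contradiction and hence $\min\{v^2(S_\alpha),v^2(S_{\alpha'})\}\le1-\e^2$.
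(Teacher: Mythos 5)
Your reduction to a contradiction and your \emph{separated} case (when $|\tau\lambda_\alpha|\ge\delta_2/2$) are sound and run parallel to the paper's Step 4.2.2, where the role of your $\tau\lambda_\alpha$ is played by the coefficient $v$ of $g$ in the paper's $\tilde X$. The genuine gap is exactly the \emph{close} case you defer, and it is not a matter of balancing constants. Your transfer step $|u_t(S_\alpha)-u_t(S_{\alpha'})|\le|t|N^{-1/2}\|S_\alpha-S_{\alpha'}\|_1\le c\delta_2\e^{2/r}$ degrades the hypothesis $1-|u_{t_2}(S_{\alpha'})|^2<\e^2$ to a bound of order $\e^{2/r}$, because the only closeness of $S_\alpha$ and $S_{\alpha'}$ available is the second inequality of \eqref{4.22}, which lives at scale $\e^{2/r}$. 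Consequently the lattice/coset concentration you can extract for $W=t_1N^{-1/2}(S_\alpha-S_{\alpha'})$ is only $\E\,\mathrm{dist}(W_s,2\pi\mathbb Z)^2\le c\,\e^{2/r}$, hence (after removing the tail using $\|W\|_r\le4\delta_2\e^{2/r}$) at best $\|W\|_2^2\le c\,\e^{2/r}$. This does not contradict $\|W\|_2\ge\|W^*\|_2\ge(1-\delta_1)c_0\e$: since $r>4$ and $\e$ may be as small as $N^{\nu-1/2}$, one has $\e^{2/r}\gg c_0^2\e^2$, so the two estimates are perfectly compatible no matter how $c_0$, $\delta_1$, $\delta_2$ are tuned. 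The $\e^2$-precision of the hypothesis at $(t_2,S_{\alpha'})$ is irretrievably lost the moment you move it to the common frequency $t_1$; your proposed quantitative finish cannot close the argument.

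The paper avoids this by never transferring. It forms $\tilde X=s(g+N^{-1/2}S_{\alpha'})-t(g+N^{-1/2}S_\alpha)$ and uses the inequality $1-|\E e^{i(Y+Z)}|^2\ge\tfrac12(1-|\E e^{iZ}|^2)-(1-|\E e^{iY}|^2)$, see \eqref{A4.4}, so that both hypotheses enter at their native scale and give $1-|\E e^{i\tilde X}|^2<4\e^2$. In the close case $|s-t|<\delta_2$ it writes $\tilde X=vg+h$ with $h$ orthogonal to $g$ in $L^2$; the separation in \eqref{4.22} gives $\|h\|_2^2\ge(4/10)c_0^2\e^2$, while the $\e^{2/r}$-smallness of $\|S_\alpha-S_{\alpha'}\|_r$ is used only through its $r$-th power, yielding $\|h\|_r^r\le3^{1-r}\e^2$ — the correct scale. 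Then Lemma \ref{LA4.1} a), a Taylor-type lower bound for $1-|\E e^{i(vg+h)}|^2$ in terms of $\|h_s\|_2^2$, gives $1-|\E e^{i\tilde X}|^2\ge\tfrac13\|h\|_2^2>4\e^2$ (here $c_0=10$ supplies the margin), the contradiction at matching order $\e^2$. To complete your proof you would need to replace the transfer-to-a-common-frequency step by this device (or an equivalent one that keeps the two $\e^2$-level hypotheses intact), rather than refine the coset-concentration estimate.
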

Recall that the functionals $\tau(\cdot)$, $|u_t(\cdot)|$ and the interval
$I=I(T_0)$ used in proof below are defined in (\ref{3.12}).

{\it Proof.}
Note that $\delta_1<1/10$ and $\delta_2<1/12$.
In particular, we have
\begin{equation}\label{4.23}
9/10\le  1-\delta_1\le |s/T_0|\le 1+\delta_1\le 11/10,
\quad
{\text{for}}
\quad
|s-T_0|<\delta_1N^{-\nu+1/2}.
\end{equation}

\bigskip
{\bf Step 1.}
Assume that the inequality $\min\{v^2(S_{\alpha}),\, v^2(S_{\alpha'}\}\le
1-\e^2$ fails. Then for some $s,t\in I$ we have
\begin{equation}\label{4.24}
1-|u_t(S_{\alpha})|^2< \e^2,
\qquad
1-|u_s(S_{\alpha'})|^2< \e^2,
\end{equation}
see also (\ref{3.12}). Fix these $s,t$ and denote
$$
{\tilde X}=s(g+N^{-1/2}S_{\alpha'})-t(g+N^{-1/2}S_{\alpha}).
$$

 We are going to apply the inequality (\ref{A4.4}),
$$
1-|\E e^{i(Y+Z)}|^2
\ge
2^{-1}(1-|\E e^{iZ}|^2)
-
(1-|\E e^{iY}|^2)
$$
to  $Z=-{\tilde X}$ and $Y=s(g+N^{-1/2}S_{\alpha'})$.
It follows from this inequality and (\ref{4.24}) that
$$
\e^2
>
1-|u_t(S_{\alpha})|^2
=
1-|\E e^{i(Y+Z)}|^2
\ge
2^{-1}(1-|\E e^{-i{\tilde X}}|^2)-\e^2.
$$
In view of the identity $|\E e^{-i{\tilde X}}|=|\E e^{i{\tilde X}}|$ we have
\begin{equation}\label{4.25}
1-|\E e^{i{\tilde X}}|^2< 4\e^2.
\end{equation}

\bigskip
{\bf Step 2.}
Here we shall show  that (\ref{4.25}) contradicts the second inequality of (\ref{4.22}).
Firstly, we collect some auxiliary inequalities.
Write the decomposition (\ref{3.4}) for $S_{\alpha}$ and $S_{\alpha'}$,
\begin{equation}\label{4.26}
S_{\alpha}=a\,g+S_{\alpha}^*,
\qquad
S_{\alpha'}=a'\,g+S_{\alpha'}^*.
\end{equation}
Decompose
\begin{align*}
&
{\tilde X}
=vg+h,
\\
&
v=(s-t)(1+a\,N^{-1/2})+(a'-a)sN^{-1/2},
\\
&
h=
(s-t)N^{-1/2}S_{\alpha}^*+sN^{-1/2}(S_{\alpha'}^*-S_{\alpha}^*),
\end{align*}
where $v\in {\mathbb R}$ and where $h\in L^r$ is $L^2$-orthogonal to
$g$.  An application of (\ref{3.7}) to $S_{\alpha}^*$ and $S_{\alpha'}^*-S_{\alpha}^*$
gives
\begin{equation}\label{4.27}
\|h\|_r
\le
c_g
N^{-1/2}
\bigl(
|s|\, \|S_{\alpha'}-S_{\alpha}\|_r+|s-t|\,\|S_{\alpha}\|_r
\bigr).
\end{equation}
Furthermore, it follows from the simple  inequality
$$
\Vert x+y\Vert_2^2\ge 2^{-1}\Vert x\Vert_2^2-\Vert y\Vert_2^2
$$
that
\begin{equation}\label{4.28}
\Vert h\Vert_2^2
\ge
2^{-1}s^{2}N^{-1}\Vert S_{\alpha'}^*-S_{\alpha}^*\Vert_2^2
-
(s-t)^2N^{-1}\Vert S_{\alpha}^*\Vert_2^2.
\end{equation}

Note that for $a$ and $a'$ defined in (\ref{4.26}) we obtain from (\ref{3.6}) and (\ref{4.22}) that
\begin{eqnarray}
\label{4.29}
&&
|a|
\le
\Vert S_{\alpha}\Vert_r \Vert g\Vert_2^{-1}
\le
 N^{\nu}\|g\|_2^{-1},
\\
\label{4.30}
&
&
|a'-a|
\le
\Vert S_{\alpha'}-S_{\alpha}\Vert_r \Vert g\Vert_2^{-1}
\le
2 \delta_2 \e^{2/r} N^{1/2}T_0^{-1}\|g\|_2^{-1}.
\end{eqnarray}

\bigskip
{\bf Step 4.2.1.} Consider the case where,
$
|s-t|<\delta_2$.
Invoking the inequalities $\Vert S_{\alpha}\Vert_r\le N^{\nu}$ and
(\ref{4.22}) we obtain from (\ref{4.27}) that
$$
\Vert h\Vert_r^r
\le
(4c_g)^r \delta_2^r\,
\Bigl(
N^{\nu r-r/2}+\e^2|s|^rT_0^{-r}
\Bigr).
$$
Furthermore,
using (\ref{4.23}), (\ref{4.1}),
and $N^{\nu-1/2}\le \e$,
we obtain for $4\le r\le 5$
\begin{equation}\label{4.31}
\Vert h\Vert_r^r\le
3^{-r}(\e^r+\e^2(11/10)^r)
\le
3^{1-r}\e^2.
\end{equation}

Note that (\ref{3.5}) implies $\|S_{\alpha}^*\|_2\le \|S_{\alpha}\|_r\le N^{\nu}$.
This inequality in combination
with (\ref{4.22}) and (\ref{4.28})
gives
$$
\Vert h\Vert_2^2
\ge
2^{-1}(s/T_0)^2 c_0^2 \e^2-\delta_2^2N^{2\nu-1}.
$$
Invoking (\ref{4.23})
and using
$c_0>10$,$\delta_2<12^{-1}$, and $N^{\nu-1/2}\le \e$ we obtain
\begin{equation}\label{4.32}
\Vert h\Vert_2^2\ge (4/10)c_0^2\e^2.
\end{equation}

Now we are going to apply Lemma \ref{LA4.1} statement {\bf a)} to ${\tilde X}=vg+h$.
For this purpose we verify conditions of the lemma.
Firstly, note that
(\ref{4.32}), (\ref{4.20}) imply, $\|h_s\|_2^2\ge (8/10)c_0^2\e^2$. Furthermore, it follows from the
 simple inequality $\E|h(X_1)-h(X_2)|^r\le 2^r\E|h(X_1)|^r$ and (\ref{4.31}) that
 $\|h_s\|_r^r\le 3(2/3)^r\e^2$. Therefore, we obtain, for $4\le r\le 5$,
$$
\Vert h_s\Vert_r^r
\le
\frac{6}{10}\e^2
\le c_0^{-2}\|h_s\|_2^2\le c_r \Vert h_s\Vert_2^2,
\qquad
c_r=(7/24)2^{-(r-1)}.
$$
Furthermore,
the inequalities  (\ref{4.29}), (\ref{4.30}) and (\ref{4.23})
 imply
$$
|v|
\le
\delta_2+\delta_2\|g\|_2^{-1}(N^{\nu-1/2}
+
 2\e^{2/r}(11/10))
\le
\delta_2(1+4\|g\|_2^{-1}),
$$
for $N^{\nu-1/2}\le \e\le 1$.
Invoking (\ref{4.1}) and using the inequality $\|g_s\|_r^r\le 2^r\|g\|_r^r$
and the identity $\|g_s\|_2^2=2\|g\|_2^2$
 we obtain
 $$
 |v|^{r-2}
 \le
 \frac{c_r}{2^r}\frac{\Vert g\Vert_2^2}{\Vert g\Vert_r^{r}}
 \le
\frac{c_r}{2^r}\frac{2^{-1}\|g_s\|_2^2}{2^{-r}\|g_s\|_r^r}
\le
\frac{c_r}{2}\frac{\|g_s\|_2^2}{\|g_s\|_r^r}
$$
as required by Lemma \ref{LA4.1} {\bf a)}.
This lemma implies
$$
1-|\E e^{i{\tilde X}}|^2
\ge
6^{-1}\Vert h_s\Vert_2^2=3^{-1}\|h\|_2^2.
$$
In the last step we used (\ref{4.20}). Now (\ref{4.32}), for $c_0\ge 10$,  contradicts (\ref{4.25}).

\bigskip
{\bf Step 4.2.2.} Consider the case where
$\delta_2<|s-t|\le \delta_1 N^{-\nu+1/2}$.

It follows from (\ref{4.27}), (\ref{4.22})  and (\ref{4.23})
 that
\begin{eqnarray}
\nonumber
\E|h|
\le
\Vert h\Vert_r
&
\le
&
c_g \bigl(2\delta_2\e^{2/r}|s/T_0|+\delta_1\bigr)
\\
\label{4.33}
&
\le
&
c_g(\delta_1+3\delta_2\e^{2/r})
\le
c_g\delta_1+\e^{2/r}.
\end{eqnarray}
In the last step we used $\delta_2<1/3$.
From (\ref{4.29}), (\ref{4.30}) and
(\ref{4.23}), we obtain for $\delta_2\le |s-t|$ and $N^{\nu-1/2}\le \e$,
\begin{align*}
|v|
&
\ge
\delta_2 (1-N^{\nu-1/2}\|g\|_2^{-1})
-
2\delta_2\e^{2/r}|s/T_0|\|g\|_2^{-1}
\\
&
=
\delta_2(1-\|g\|_2^{-1}(\e+\e^{2/r}(22/10)))
\\
&
\ge
\delta_2(1-3\e^{2/r}\|g\|_2^{-1})\ge \delta_2/2,
\end{align*}
provided that $\e^{2/r}<\|g\|_2/6$. Similarly, using in addition,
$\delta_1, \delta_2<1/4$ and $\e<\|g\|_2$, we obtain,
 for
 $|s-t|\le \delta_1N^{-\nu+1/2}$,
\begin{align*}
|v|
&
\le
|s-t|(1+N^{\nu-1/2}\|g\|_2^{-1})
+
2\delta_2\e^{2/r}|s/T_0|\|g\|_2^{-1}
\\
&
\le |s-t|(1+\e\|g\|_2^{-1})+(22/10)\delta_2\e^{2/r}\|g\|_2^{-1}
\\
&
\le
2\,|s-t|+1
\le
N^{-\nu+1/2}.
\end{align*}
It follows from these inequalities, see (\ref{4.2}), that
$$
1-|\E e^{i{\tilde X}}|^2
\ge
1-|\E e^{i{\tilde X}}|
\ge
1-|\E e^{ivg}|-\E |h|
\ge
\delta'-\E|h|.
$$
Finally, invoking
(\ref{4.33}) and (\ref{3.10}), we get
$$
1-|\E e^{i{\tilde X}}|^2\ge
\delta'
-c_g\delta_1-\e^{2/r}
\ge
\delta'/2
>
4\e^2,
$$
Once again we obtain a contradiction to (\ref{4.25}), thus completing
the proof.

\section{ Expansions}

Here we prove the bound
\begin{equation}\label{5.1}
\int_{|t|\le t_1}\Bigl|\E e^{it{\tilde {\mathbb T}}}-{\hat G}(t)\Bigr|\frac{dt}{|t|}
\le c_*N^{-1-\nu},
\end{equation}
 where $t_1=N^{1/2}/10^3\beta_3$. For the definition of ${\tilde {\mathbb T}}$ and
${\hat G}$ see section {\bf 2.1.} Here and below $c_*$
denotes a constant depending on $A_*,M_*,D_*,r,s,\nu_1$ only.
We  prove (\ref{5.1}) for sufficiently large $N$, that is, we shall assume
that $N>C_*$, where $C_*$ is a number depending on $A_*,M_*,D_*,r,s,\nu_1$
only. Note that for $N<C_*$, the bound (\ref{5.1}) becomes trivial, since in this case
 the integral is bounded by a constant.

Let us first  introduce some notation. Denote $\Omega_m=\{1,\dots, m\}$.
For $A\subset \Omega_N$ write ${\mathbb U}_1(A)=\sum_{j\in A}g_1(X_j)$.
Given complex valued functions $f, h$ we write $f\prec \R$ if
$$
\int_{|t|\le t_1}|t^{-1}f(t)|dt\le c_*N^{-1-\nu}$$ 
and write
$f\sim h$ if $f-h\prec \R$.
In particular, (\ref{5.1}) can be written in short
$\E e^{it{\tilde {\mathbb T}}}\sim{\hat G}(t)$.

In order to prove (\ref{5.1}) we show that
\begin{equation}\label{5.2}
\E e^{it{\tilde {\mathbb T}}}\sim \E e^{it{\mathbb T}}
\qquad
{\text{and}}
\qquad
\E e^{it{\mathbb T}}\sim {\hat G}(t).
\end{equation}
In what follows we use the notation of Section 2 and assume that (\ref{2.3}) holds.

\bigskip
{\bf 5.1.} Let us prove the first part of (\ref{5.2}).
Write
 $$
{\mathbb T}={\tilde {\mathbb T}}+{\tilde \Lambda}_1+{\tilde \Lambda}_2,
\qquad
{\tilde \Lambda}_1=\Lambda_1+\Lambda_4,
\qquad
{\tilde \Lambda}_2=\Lambda_2+\Lambda_3+\Lambda_5,
$$
where the random variables  $\Lambda_j$ are introduced in {\bf 2.3}.
We shall show that
\begin{equation}\label{5.3}
 \E e^{it{\tilde {\mathbb T}}}
\sim
\E e^{it({\tilde {\mathbb T}}+{\tilde \Lambda}_1)}
\qquad
{\text{and}}
\qquad
\E e^{it({\tilde {\mathbb T}}+{\tilde \Lambda}_1)}
\sim
 \E e^{it {\mathbb T}}.
\end{equation}
The second relation follows from the moment bounds of
Lemma \ref{LA1.2} via Taylor expansion. We have
$$
 \E e^{it{\mathbb T}}
=
\E e^{it({\tilde {\mathbb T}}+{\tilde \Lambda}_1)}
+R,
\qquad
|R|\le |t|\E|{\tilde \Lambda}_2|,
$$
By Lyapunov's inequality,
$$
\E|{\tilde \Lambda}_2|
\le
(\E\Lambda_2^2)^{1/2}
+
(\E\Lambda_3^2)^{1/2}
+
(\E \Lambda_5^2)^{1/2}.
$$
Invoking the moment bounds of Lemma \ref{LA1.2} we obtain
$|t|\E|{\tilde \Lambda}_2|\prec \R$, thus, proving the second part
of (\ref{5.3}).

In order to prove the first part we combine Taylor's expansion with bounds
for characteristic functions.
Expanding the exponent we obtain
$$
\E e^{it({\tilde {\mathbb T}}+{\tilde \Lambda}_1)}
=
\E e^{it{\tilde {\mathbb T}}}
+
it\E e^{it{\tilde {\mathbb T}}}{\tilde \Lambda}_1
+
R,
\qquad
|R|\le t^2\E|{\tilde \Lambda}_1|^2.
$$
Invoking the identities
\begin{equation}\label{5.4}
\E\Lambda_1^2=\binom{m}{2}\frac{\gamma_2}{N^3},
\qquad
\E \Lambda_4^2=m \binom{N-m}{2}\frac{\zeta_2}{N^5}
\end{equation}
we obtain, for $\gamma_2<c_*$ and $\zeta_2<c_*$, see (\ref{1.3}),
 and $m\le N^{1/12}$, that
 $R\prec \R$.
We complete the proof of (\ref{5.3}) by showing that
\begin{equation}\label{5.5}
t\E e^{it{\tilde {\mathbb T}}}{\tilde \Lambda}_1\prec \R.
\end{equation}
Let us prove (\ref{5.5}).
Split ${\mathbb W}={\mathbb W}_1+{\mathbb W}_2+{\mathbb W}_3+R_{W}$, where
$$
{\mathbb W}_k=\sum_{A\subset \Omega',\,|A|=k}T_A,
\qquad
R_W=\sum_{A\subset \Omega', \, |A|\ge 4}T_A.
$$
Here $\Omega'=\{m+1,\dots, N\}$.
Denote ${\mathbb R}={\mathbb U}_2^*+{\mathbb W}_3+R_W$ and
${\mathbb U}_1=\sum_{j=1}^Ng_1(X_j)$. We have
 $
{\tilde {\mathbb T}}={\mathbb U}_1+{\mathbb W}_2+{\mathbb R}$.
Expanding the exponent in powers of $it{\mathbb R}$ we obtain
\begin{equation}\label{5.6}
 t\E e^{it{\tilde {\mathbb T}}}{\tilde \Lambda}_1
=
 t\E e^{it({\mathbb U}_1+{\mathbb W}_2)}{\tilde \Lambda}_1+t^2R,
\end{equation}
where
\begin{align*}
&
|R|
\le
\E|{\tilde \Lambda}_1{\mathbb R}|
\le
(r_1+r_2)(r_3+r_4+r_5),
\\
&
r_1^2=\E \Lambda_1^2,
\quad
r_2^2=\E \Lambda_4^2,
\quad
r_3^2=\E ({\mathbb U}_2^*)^2,
\quad
r_4^2=\E R_W^2,
\quad
r_5^2=\E {\mathbb W}_3^2.
\end{align*}
In the last step we applied the Cauchy-Schwartz inequality.
Combining (\ref{5.4}) with the
identities
$$
\E ({\mathbb U}_2^*)^2
=
\frac{m(N-m)}{N^3}\gamma_2,
\qquad
\E {\mathbb W}_3^2=\frac{\binom{N-m}{3}}{N^5}\zeta_2
$$
and invoking the simple bound
$$
\E R_W^2\le \frac{\Delta_4^2}{N^3}\le \frac{D_*}{N^{2+2\nu_1}},
$$
we obtain $t^2(r_1+r_2)(r_3+r_4+r_5)\prec \R$.
Therefore, (\ref{5.6}) implies
$$
t\E e^{it{\tilde {\mathbb T}}}{\tilde \Lambda}_1
\sim
 t\E e^{it({\mathbb U}_1+{\mathbb W}_2)}{\tilde \Lambda}_1.
$$
Let us show that $t\E e^{it({\mathbb U}_1+{\mathbb W}_2)}{\tilde \Lambda}_1\sim 0$.
Expanding the exponent in powers of $it{\mathbb W}_2$ we get
\begin{align*}
&
 t\E e^{it({\mathbb U}_1+{\mathbb W}_2)}{\tilde \Lambda}_1
= f_1(t)+f_2(t)+f_3(t)+f_4(t),
\\
&
f_1(t)= t\E e^{it{\mathbb U}_1}{\tilde \Lambda}_1,
\qquad
\qquad
\quad
f_2(t)= it^2\E e^{it{\mathbb U}_1}\Lambda_1{\mathbb W}_2,
\\
&
f_3(t)= t^2\E e^{it{\mathbb U}_1} \Lambda_4{\mathbb W}_2\theta_1,
\qquad
\
f_4(t)= t^3\E e^{it{\mathbb U}_1}\Lambda_1{\mathbb W}_2^2\theta_2/2,
\end{align*}
where $\theta_1, \theta_2$ are
 functions of ${\mathbb W}_2$ satisfying $|\theta_i|\le 1$.

Let us show that $f_i\prec \R$, for $i=1,2,3,4$.
 Split the set
$\Omega_m=\{1,\dots, m\}$ in three (non-intersecting) parts
$A_1\cup A_2\cup A_3=\Omega_m$ of
(almost) equal size $|A_i|\approx m/3$. The set of pairs
$\bigl\{\{i,j\}\subset \Omega_m\bigr\}$ splits into six (non-intersecting)
parts $B_{kr}$, $1\le k\le r\le 3$ (the pair $\{i,j\}$ belongs to $B_{kr}$ if
$i\in A_k$ and $j\in A_r$).
Write
\begin{align*}
&
\Lambda_1=\sum_{1\le k\le r\le 3}\Lambda_1(k,r),
\qquad
\Lambda_1(k,r)=\sum_{\{i,j\}\in B_{kr}} g_2(X_k,X_l),
\\
&
\Lambda_4=\sum_{1\le k\le 3}\Lambda_4(k),
\qquad
\Lambda_4(k)=\sum_{i\in A_k}\sum_{m+1\le j<l\le N}g_3(X_i,X_j,X_l).
\end{align*}

Let us prove $f_4\prec \R$. We shall show that
\begin{equation}\label{5.7}
t^3\E e^{it{\mathbb U}_1}\Lambda_1(k,r){\mathbb W}^2_2\theta_2\prec \R.
\end{equation}
 Given a pair $(k,r)$ denote
$A_i=\Omega_m\setminus (A_k\cup A_r)$ and write $k_i=|A_i|$. Note
that $k_i\approx m/3$. We shall assume that $k_i\ge m/4$. Since the random
variable
${\mathbb U}_1(A_i):=\sum_{j\in A_i}g_1(X_j)$ and the random variables
 $\Lambda_1(k,r)$, ${\mathbb W}_2$ are independent,
we have
$$
\E e^{it{\mathbb U}_1}\Lambda_1(k,r){\mathbb W}^2_2\theta_2
\le
\E e^{it{\mathbb U}_1(A_i)}
\
 \E \Lambda_1(k,r){\mathbb W}^2_2\theta_2.
$$
Therefore,
\begin{equation}\label{5.8}
|\E e^{it{\mathbb U}_1}\Lambda_1(k,r){\mathbb W}^2_2\theta_2|
\le
|\E e^{it{\mathbb U}_1(A_i)}|
\
 \E| \Lambda_1(k,r){\mathbb W}^2_2|.
\end{equation}
The first factor on the right
 is bounded from above by $\exp\{-mt^2/16N\}$, for
$k_i\ge m/4$, see (\ref{5.39}) below.
 The second factor  is bounded from above by $r$,
where
$$
r^2=\E \Lambda_1^2(k,r)\E {\mathbb W}_2^4\le c_*m^2N^{-5}.
$$
Here we combined the Cauchy-Schwartz inequality and  the bounds
$$
 \E \Lambda_1^2(k,r)\le c_*m^2N^{-3},
\qquad
\E {\mathbb W}_2^4\le c_*N^{-2}.
$$
Finally, (\ref{5.7}) follows from (\ref{5.8})
$$
\bigl| t^3\E e^{it{\mathbb U}_1}\Lambda_1(k,r){\mathbb W}_2^2\theta_2 \bigr|
\le
c_*|t|^3e^{-mt^2/16N}mN^{-5/2}\prec \R.
$$

The proof of $f_3\prec \R$ is almost the same as that of  $f_4\prec \R$.

Let us prove $f_2\prec \R$.
Split the set $\Omega'=\{m+1,\dots, N\}$ into three (non-intersecting) parts
$B_1\cup B_2\cup B_3=\Omega'$ of (almost) equal sizes $|B_i|\approx (N-m)/3$.
Split the set of pairs $\bigl\{ \{i,j\}:\, m+1\le i<j\le N\bigr\}$ into
(non-intersecting) groups $D(k,r)$, for $1\le k\le r\le 3$. The pair
$\{i,j\}\in D(k,r)$ if $i\in B_k$ and $j\in B_r$.
Write
\begin{align*}
&
{\mathbb W}_2=\sum_{1\le k\le r\le 3}{\mathbb W}_2(k,r),
\qquad
{\mathbb W}_2(k,r)=\sum_{\{i,j\}\in D(k,r)}g_2(X_i,X_j).
\\
&
\Lambda_4=\sum_{1\le  k\le r\le 3}\Lambda_4(k,r),
\qquad
\Lambda_4(k,r)=\sum_{1\le s\le m}\sum_{\{i,j\}\in D(k,r)}g_3(X_s,X_i,X_j),
\end{align*}
In order to prove $f_2\prec \R$ we shall show that
\begin{equation}\label{5.9}
t^2\E e^{it{\mathbb U}_1}\Lambda_1{\mathbb W}_2(k,r)\prec \R.
\end{equation}
Write $B_i=\Omega'\setminus(B_k\cup  B_r)$ and denote $m_i=|B_i|$.
We shall assume that $m_i\ge N/4$. Since the random variable
${\mathbb U}_1(B_i)=\sum_{j\in B_i}g_1(X_j)$ and the random variables $\Lambda_1$
and ${\mathbb W}_2(k,r)$ are independent, we have, cf. (\ref{5.8}),
\begin{equation}\label{5.10}
|\E e^{it{\mathbb U}_1}\Lambda_1{\mathbb W}_2(k,r)|
\le
|\E e^{it{\mathbb U}_1(B_i)}|
\
 \E|\Lambda_1{\mathbb W}_2(k,r)|.
\end{equation}
The first factor in the right is the product $|\alpha^{m_i}(t)|\le
e^{-m_it^2/4N}$, see the argument used in the proof of (\ref{5.7}) above.
 The second factor  is bounded from above by $r$,
where
$$
r^2=\E \Lambda_1^2\E {\mathbb W}_2^2(k,r)\le c_*m^2N^{-4}.
$$
Finally, we obtain, using the inequality $m_i\ge N/4$,
$$
|\E e^{it{\mathbb U}_1}|
\
\E|\Lambda_1{\mathbb W}_2(k,r)|
\le c_*\frac{m}{N^2}\exp\{-t^2\frac{m_i}{4N}\}
\le c_*\frac{m}{N^2}\exp\{-\frac{t^2}{16}\}.
$$
This in combination with (\ref{5.10}) shows (\ref{5.9}). We obtain $f_2\prec \R$.

Let us prove $f_1\prec \R$. We shall show that $f^*\prec \R$ and $f^{\star}\prec \R$, where
$$
f^{\star}=t\E e^{it{\mathbb U}_1}\Lambda_1
\qquad
{\text{and}}
\qquad
f^*=t\E e^{it{\mathbb U}_1}\Lambda_4
$$
satisfy $f^*+f^{\star}=f_1$.

Let us show $f^{\star}\prec \R$.
Denote ${\mathbb U}_1^{\star}=\sum_{j=m+1}^Ng_1(X_j)$. We obtain, by the
independence of ${\mathbb U}_1^{\star}$ and $\Lambda_1$ that
$$
|\E e^{it{\mathbb U}_1}\Lambda_1|
\le
|\E e^{it{\mathbb U}_1^{\star}}|
\
\E|\Lambda_1|.
$$
Invoking, for $N-m>N/2$, the bound
$
|\E e^{it{\mathbb U}_1^{\star}}|
\le e^{-t^2/8}
$, see (\ref{5.39}) below,
and the bound $\E|\Lambda_1|\le (\E \Lambda_1^2)^{1/2}\le c_*mN^{-3/2}$
we obtain
$$
|f^{\star}(t)|\le c_*|t|e^{-t^2/8}N^{-3/2}\prec \R.
$$

Let us prove $f^*\prec \R$. We shall show that, for $1\le k\le r\le 3$,
\begin{equation}\label{5.11}
t\E e^{it{\mathbb U}_1}\Lambda_4(k,r)\prec \R.
\end{equation}
Proceeding  as in the proof of (\ref{5.9}) we obtain the chain of inequalities
\begin{equation}\label{5.12}
|\E e^{it{\mathbb U}_1}\Lambda_4(k,r)|
\le
e^{-t^2/16}\E |\Lambda_4(k,r)|
\le
c_*
e^{-t^2/16}m^{1/2}N^{-3/2}.
\end{equation}
In the last step we applied Cauchy-Schwartz and the simple
bound $\E \Lambda_4^2(k,r)\le c_*mN^{-3}$.
Clearly, (\ref{5.12}) implies (\ref{5.11}).

\bigskip
{\bf 5.2.} Here we prove the second relation of (\ref{5.2}). Firstly, we
 shall show that
\begin{equation}\label{5.13}
\E e^{it{\mathbb T}}\sim \E \exp\{it({\mathbb U}_1+{\mathbb U}_2+{\mathbb U}_3)\},
\end{equation}
\begin{equation}\label{5.14}
\E \exp\{it({\mathbb U}_1+{\mathbb U}_2+{\mathbb U}_3)\}
\sim
\E \exp\{it({\mathbb U}_1+{\mathbb U}_2)\}+\binom{N}{3} e^{-t^2/2}(it)^4w,
\end{equation}
where $w=\E g_3(X_1,X_2,X_3)g_1(X_1)g_1(X_2)g_1(X_3)$.

Let $m(t)$ be an integer valued function such that
\begin{equation}\label{5.15}
m(t)\approx C_1Nt^{-2}\ln (t^2+1),
\qquad
C_1\le |t|\le t_1,
\end{equation}
and put $m(t)\equiv 10$, for $|t|\le C_1$. Here $C_1$ denotes a large
absolute constant (one can take, e.g.,  $C_1=200$).
 Assume, in addition, that the numbers $m=m(t)$ are even.

\bigskip
{\bf 5.2.1.} Here we prove (5B.1). Given $m$ write
$$
{\mathbb T}={\mathbb U}_1+{\mathbb U}_2+{\mathbb U}_3+{\mathbb H},
$$
where
$$
{\mathbb H}={\mathbb H}_1+{\mathbb H}_2,
\qquad
{\mathbb H}_1=\sum_{|A|\ge 4,\, A\cap \Omega_m=\emptyset}T_A,
\qquad
{\mathbb H}_2=\sum_{|A|\ge 4,\, A\cap \Omega_m\not=\emptyset}T_A.
$$
In order to show (\ref{5.13}) we expand the exponent in powers of $it{\mathbb H}$ and
$it{\mathbb U}_3$,
$$
\E \exp\{it{\mathbb T}\}
=
\E \exp\{it({\mathbb U}_1+{\mathbb U}_2+{\mathbb U}_3)\}
+
\E \exp\{it({\mathbb U}_1+{\mathbb U}_2)\}it{\mathbb H}
+
R,
$$
where
$
|R|\le t^2(\E {\mathbb H}^2+\E |{\mathbb U}_3{\mathbb H}|)
$. Invoking the bounds, see (\ref{A1.1}), (\ref{A1.2}), (\ref{1.3}), (\ref{1.4}),
\begin{equation}\label{5.16}
\E {\mathbb H}^2\le N^{-3}\Delta_4^2\le c_*N^{-2-2\nu_1},
\qquad
\E {\mathbb U}_3^2\le N^{-2}\zeta_2\le c_*N^{-2}
\end{equation}
we obtain, by Cauchy-Schwartz,
$|R|\le c_*t^2 N^{-2-\nu_1}\prec \R$.
We complete the proof of (\ref{5.13}) by showing that
\begin{equation}\label{5.17}
\E \exp\{it({\mathbb U}_1+{\mathbb U}_2)\}it{\mathbb H}\prec \R.
\end{equation}
Before proving (\ref{5.17}) we collect some auxiliary inequalities.
For $m=2k$ write
\begin{equation}\label{5.18}
\Omega_m=A_1\cup A_2,
{\text{ where}}
\qquad
A_1=\{1,\dots, k\},
\qquad
A_2=\{k+1,\dots, 2k\}.
\end{equation}
Furthermore, split the sum
\begin{eqnarray}\label{5.19}
{\mathbb U}_2
&
=
&{\mathbb Z}_1+{\mathbb Z}_2+{\mathbb Z}_3+{\mathbb Z}_4,
\\
\nonumber
{\mathbb Z}_1
&
=
&
\sum_{1\le i<j\le m}g_2(X_i,X_j),
\qquad
{\mathbb Z}_2
=
\sum_{i\in A_1}\sum_{m<j\le N}g_2(X_i,X_j),
\\
\nonumber
{\mathbb Z}_3
&
=
&
\sum_{i\in A_2}\sum_{m<j\le N}g_2(X_i,X_j),
\qquad
{\mathbb Z}_4=\sum_{m<i<j\le N}g_2(X_i,X_j).
\end{eqnarray}
In what follows we shall use the simple bounds, see (\ref{1.3}),
\begin{eqnarray}\label{5.20}
&&
\E {\mathbb Z}_1^2\le \frac{m^2}{N^3}\gamma_2\le c_*\frac{m^2}{N^3},
\qquad
\E {\mathbb Z}_4^2\le \frac{\gamma_2}{N}\le \frac{c_*}{N},
\\
\nonumber
&&
\E {\mathbb Z}_i^2\le \frac{m}{N^2}\gamma_2\le c_*\frac{m}{N^2},
\quad
\E {\mathbb Z}_i^4\le c\frac{m^2}{N^{4}}\gamma_4\le c_*\frac{m^2}{N^4},
\quad
i=2,3.
\end{eqnarray}

Let us prove (\ref{5.17}).
Expand the exponent $\exp\{it({\mathbb U}_1+{\mathbb Z}_1+\dots+{\mathbb Z}_4)\}$
in powers of
$it{\mathbb Z}_1$
to get
$$
\E \exp\{it({\mathbb U}_1+{\mathbb U}_2)\}it{\mathbb H}
=
h_1(t)+R
$$
where
$h_1(t)=\E \exp\{it({\mathbb U}_1+{\mathbb Z}_2+\dots+{\mathbb Z}_4)\}it{\mathbb H}$
and where
$$
|R|
\le
t^2\E|{\mathbb H}{\mathbb Z}_1|
\le
t^2(\E{\mathbb H}^2)^{1/2}(\E {\mathbb Z}_1^2)^{1/2}
\le
c_*t^2mN^{-(5+2\nu_1)/2}.
$$
For $m=m(t)$ satisfying (\ref{5.15}) we have $R\prec\R$. Therefore, we obtain
$$
\E \exp\{it({\mathbb U}_1+{\mathbb U}_2)\}it{\mathbb H}\sim
h_1.
$$
In order to prove $h_1\prec \R$ we write $h_1=h_2+h_3$ and show that
$h_2, h_3\prec\R$ , where
$$
h_2=\E \exp\{it({\mathbb U}_1+{\mathbb Z}_2+\dots+{\mathbb Z}_4)\}it{\mathbb H}_1,
\qquad
h_3=\E \exp\{it({\mathbb U}_1+{\mathbb Z}_2+\dots+{\mathbb Z}_4)\}it{\mathbb H}_2.
$$

Let us show that $h_2\prec\R$. Firstly,
 we prove that
\begin{equation}\label{5.21}
h_2\sim h_{2.1}+h_{2.2}+h_{2.3},
\end{equation}
where
$h_{2.1}(t)=\E \exp\{it({\mathbb U}_1+{\mathbb Z}_4)\}it{\mathbb H}_1$
and, for $j=2,3$,
$$
h_{2.j}(t)=\E\exp\{it({\mathbb U}_1+{\mathbb Z}_4)\}(it)^2{\mathbb H}_1{\mathbb Z}_j.
$$
Expanding the exponent in powers of
$it({\mathbb Z}_2+{\mathbb Z}_3)$ we obtain
$$
h_2=h_{2.1}+h_{2.2}+h_{2.3}+R,$$
where $|R|\le |t|^3\E |{\mathbb H}_1|({\mathbb Z}_2+{\mathbb Z}_3)^2$ is bounded
from above by
$$
|t|^3 (\E {\mathbb H}_1^2)^{1/2}(\E({\mathbb Z}_2+{\mathbb Z}_3)^4)^{1/2}
\le
c_*|t|^3mN^{-3-\nu_1}
\prec\R.
$$
In the last step we used $\E {\mathbb H}_1^2\le \E {\mathbb H}^2$ and applied
(\ref{5.16}) and (\ref{5.20}).
Therefore, (\ref{5.21}) follows.

Let us show  $h_{2.i}\prec \R$, for $i=1,\, 2,\,3$.
The random variable ${\mathbb U}_1(A_1)$
does not depend on the observations $X_j$, $j\in \Omega\setminus A_1$.
Therefore, we can write
$$
h_{2.3}
=
\E \exp\{it{\mathbb U}_1(A_1)\}
\E\exp\{it({\mathbb U}_1(\Omega\setminus A_1)+{\mathbb Z}_4)\}(it)^2
{\mathbb H}_1{\mathbb Z}_3.
$$
Furthermore, using (\ref{5.39}) we obtain, for $|A_1|=m/2$,
\begin{equation}\label{5.22}
|h_{2.3}|\le t^2|\alpha^{m/2}(t)|\E|{\mathbb H}_1{\mathbb Z}_3|
\le
c_*t^2\exp\{-t^2\frac{m}{8N}\}\frac{m^{1/2}}{N^{2+\nu_1}}.
\end{equation}
In the last step we combined the bound $\E {\mathbb H}_1^2\le c_*N^{-2-2\nu_1}$
and (\ref{5.20}) to get
$$
\E|{\mathbb H}_1{\mathbb Z}_3|
\le
(\E {\mathbb H}_1^2)^{1/2}(\E {\mathbb Z}_3^2)^{1/2}
\le
c_*m^{1/2}N^{-2-\nu_1}.
$$
Note that choosing of $C_1$ in (\ref{5.15}) sufficiently large implies,
for $|t|\ge C_1$,
$$
t^2m/12N\approx (C_1/12)\ln (t^2+1)\ge 10 \ln (t^2+1).
$$
An application of this bound to the argument of the exponent in (\ref{5.22}) shows
 $h_{2.3}\prec \R$.
The proof of $h_{2.i}\prec\R$, for $i=1,2$, is almost the same. Therefore,
we obtain $h_2\prec\R$.

Let us prove $h_3\prec \R$. Firstly we collect some auxiliary inequalities.
Write $m=2k$ (recall that
the number $m$ is even) and split $\Omega_m=B \cup D$, where $B$
denotes the set of odd numbers and $D$ denotes the set of even numbers.
Split ${\mathbb H}_2={\mathbb H}_B+{\mathbb H}_D+{\mathbb H}_C$.
Here, for $A\subset \Omega_N$ and $|A|\ge 4$, we denote by ${\mathbb H}_B$
the sum of $T_A$ such that $A\cap B=\emptyset$ and
$A\cap D\not=\emptyset$; ${\mathbb H}_D$ denotes the sum of $T_A$ such that
$A\cap B\not=\emptyset$ and
$A\cap D=\emptyset$;
${\mathbb H}_C$ denotes the sum of $T_A$ such that
$A\cap B\not=\emptyset$ and
$A\cap D\not=\emptyset$.
It follows from the inequalities (\ref{A1.12}) and (\ref{1.4}) that
\begin{equation}\label{5.23}
\E \,{\mathbb H}_C^2\le c_*m^2N^{-4-2\nu_1},
\qquad
\E \,{\mathbb H}_B^2=\E \,{\mathbb H}_D^2\le c_*mN^{-3-2\nu_1}.
\end{equation}

Using the notation
$z=it\exp\{it({\mathbb U}_1+{\mathbb Z}_2+{\mathbb Z}_3+{\mathbb Z}_4)\}$ write
\begin{align*}
&
h_3=\E z{\mathbb H}_2=h_{3.1}+h_{3.2}+h_{3.3},
\\
&
h_{3.1}=\E z{\mathbb H}_B,
\quad
h_{3.2}=\E z{\mathbb H}_D,
\quad
h_{3.3}=\E z{\mathbb H}_C.
\end{align*}
We shall show that $h_{3.i}\prec\R$, for $i=1,2,3$.
The relation $h_{3.3}\prec\R$ follows from (\ref{5.23}) and (\ref{5.20}), and
by Cauchy-Schwartz,
$
|h_{3.3}|\le c_*|t|\, mN^{-2-\nu_1}\prec\R$.

Let us show that $h_{3.2}\prec\R$. Expanding the exponent in powers of
$it({\mathbb Z}_2+{\mathbb Z}_3)$ we obtain
$$
h_{3.2}=h_{3.2}^*+R,
\qquad
h_{3.2}^*:=\E \exp\{it({\mathbb U}_1+{\mathbb Z}_4)\}it{\mathbb H}_D,
$$
where $|R|\le t^2\E |{\mathbb H}_D  ({\mathbb Z}_2+{\mathbb Z}_3)|$.
Combining the bounds (\ref{5.20}) and (\ref{5.23}) we obtain, by Cauchy-Schwartz,
$|R|\le c_*t^2mN^{-(5+2\nu_1)/2}\prec\R$.
Next we show that $h_{3.2}^*\prec\R$. The random variable
${\mathbb U}_1(D)=\sum_{j\in D}g_1(X_j)$ and the random variable ${\mathbb H}_D$
are independent. Therefore, we can write
$$
|h_{3.2}^*|\le |t|\,|\E \exp\{it{\mathbb U}_1(D)\}|\E |{\mathbb H}_D|.
$$
Combining (\ref{5.39}) and  (\ref{5.23}) we obtain using Cauchy-Schwartz,
$$
|h_{3.2}^*|
\le
c_*|t|\,e^{-mt^2/8N}m^{1/2}N^{-(3+2\nu_1)/2}\prec\R.
$$
The proof of $h_{3.1}\prec\R$ is similar. Therefore, we obtain $h_3\prec\R$.
This together with the relation $h_2\prec\R$, proved above, implies
$h_1\prec\R$. Thus we arrive at (\ref{5.17}) completing the proof of (\ref{5.13}).

\bigskip
{\bf 5.2.2.} Here we prove (\ref{5.14}).
We start with some auxiliary moment inequalities. Split
$$
{\mathbb U}_3=W+Z,
\qquad
W=\sum_{|A|=3,\, A\cap \Omega_m\not=\emptyset}T_A,
\qquad
Z=\sum_{|A|=3,\, A\cap \Omega_m=\emptyset}T_A.
$$
Using the orthogonality and moment bounds for $U$-statistics, see, e.g.,
Dharmadhikari et al (1968), one can show that
$$
\E W^2\le mN^2\E g_3^2(X_1,X_2,X_3),
\qquad
\E Z^2\le N^3\E g_3^2(X_1,X_2,X_3),
$$
and $\E |Z|^s\le cN^{3s/2}\E|g_3(X_1,X_2,X_3)|^s$.
Invoking (\ref{1.3}) we obtain
\begin{equation}\label{5.24}
\E W^2\le c_*mN^{-3},
\qquad
\E Z^2\le c_*N^{-2},
\qquad
\E|Z|^s\le c_*N^{-s}.
\end{equation}
For the sets $A_1,A_2\subset \Omega_m$ defined in (\ref{5.18}) write
\begin{align*}
&
{\D}=\{A\subset {\Omega}_N:\, |A|=3,\, A\cap {\Omega}_m\not=\emptyset\},
\\
&
{\D}_1=\{A\in {\D}:\, A\cap A_1=\emptyset\},
\\
&
{\D}_2=\{A\in {\D}:\, A\cap A_2=\emptyset\},
\\
&
{\D}_3=\{A\in {\D}:\, A\cap A_1\not=\emptyset,
\
A\cap A_2\not=\emptyset\}.
\end{align*}
We have
${\D}={\D}_1\cup{\D}_2\cup{\D}_3$ and $W=\sum_{A\in {\D}}T_A$. Therefore, we can write
$W=W_1+W_2+W_3$, where
$W_j=\sum_{A\in {\D}_j}T_A$.

A calculation shows that
$$
\E W_1^2=\E W_2^2\le kN^{2}\E g_3^2(X_1,X_2,X_3),
\qquad
\E W_3^2\le k^2N\E g_3^2(X_1,X_2,X_3).
$$
Therefore, we obtain form (\ref{1.3}) that
\begin{equation}\label{5.25}
\E W_1^2=\E W_2^2\le c_*mN^{-3},
\qquad
\E W_3^2\le c_*m^2N^{-4}.
\end{equation}

Let us prove (\ref{5.14}). Write ${\mathbb U}_3=W+Z$.
Expanding the exponent in powers of $itW$ we obtain
\begin{align*}
&
\E \exp\{it({\mathbb U}_1+{\mathbb U}_2+{\mathbb U}_3)\}=h_4+h_5+R,
\\
&
h_4
=\E \exp\{it({\mathbb U}_1+{\mathbb U}_2+Z)\},
\\
&
h_5=\E \exp\{it({\mathbb U}_1+{\mathbb U}_2+Z)\}itW,
\end{align*}
where, by (\ref{5.24}),  $|R|\le t^2\E W^2\le c_*t^2mN^{-3}\prec\R$.
This implies
$$
\E \exp\{it({\mathbb U}_1+{\mathbb U}_2+{\mathbb U}_3)\}\sim h_4+h_5.
$$
In order to prove (\ref{5.14}) we shall show that
\begin{eqnarray}\label{5.26}
&&
h_5\sim \E \exp\{it{\mathbb U}_1\}itW,
\\
\label{5.27}
&&
h_4
\sim
\E\exp\{it({\mathbb U}_1+{\mathbb U}_2)\}
+
\E\exp\{it{\mathbb U}_1\}itZ,
\\
\label{5.28}
&&
\E \exp\{it{\mathbb U}_1\}it{\mathbb U}_3\sim \binom{N}{3} e^{-t^2/2}(it)^4w.
\end{eqnarray}

Let us prove (\ref{5.26}).
Expanding the exponent (in $h_5$) in powers of $itZ$ we obtain
$$
h_5
=
h_6+R,
\qquad
h_6
=
\E\exp\{it({\mathbb U}_1+{\mathbb U}_2)\}itW,
$$
where, by (\ref{5.24}) and Cauchy-Schwartz,
$$
|R|
\le
t^2\E|WZ|\le c_*t^2m^{1/2}N^{-5/2}\prec\R.
$$
We have, $h_5\sim h_6$.

It remains to show that
$h_6\sim  \E \exp\{it{\mathbb U}_1\}itW$.
Split
\begin{equation}\label{5.29}
{\mathbb U}_2={\mathbb U}_2^*+{\mathbb U}_2^{\star},
\qquad
{\mathbb U}_2^*=\sum_{|A|=2,\, A\cap \Omega_m\not=\emptyset}T_A,
\qquad
{\mathbb U}_2^{\star}=\sum_{|A|=2,\, A\cap \Omega_m=\emptyset}T_A.
\end{equation}
We have, see (\ref{5.20}),
\begin{equation}\label{5.30}
\E ({\mathbb U}_2^*)^2\le c_*mN^{-2},
\qquad
\E ({\mathbb U}_2^{\star})^2\le c_*N^{-1}.
\end{equation}
Expanding
the exponent (in $h_6$) in powers of $it{\mathbb U}_2^*$ we obtain
$$
h_6=h_7+R,
\qquad
{\text{where}}
\qquad
h_7=\E\exp\{it({\mathbb U}_1+{\mathbb U}_2^{\star})\}itW,
$$
and where, by (\ref{5.24}), (\ref{5.30}) and Cauchy-Schwartz,
$$
|R|
\le
t^2\E|W{\mathbb U}_2^*|
\le
c_*t^2mN^{-5/2}
\prec
\R.
$$
Therefore, we obtain $h_6\sim h_7$.

We complete the proof of (\ref{5.26}) by showing
that $h_7\sim\E\exp\{it{\mathbb U}_1\}itW$.
Use the decomposition $W=W_1+W_2+W_3$ and write
$$
h_7=h_{7.1}+h_{7.2}+h_{7.3},
\qquad
h_{7.j}=\E\exp\{it({\mathbb U}_1+{\mathbb U}_2^{\star})\}itW_j.
$$
We shall show that
\begin{equation}\label{5.31}
h_{7.j}\sim\E\exp\{it{\mathbb U}_1\}itW_j,
\qquad
j=1,2,3.
\end{equation}
Expanding in powers of $it{\mathbb U}_2^{\star}$ we obtain
$$
h_{7.j}= \E\exp\{it{\mathbb U}_1\}itW_j+R_j,
$$
where $R_j=(it)^2\E\exp\{it{\mathbb U}_1\} W_j{\mathbb U}_2^{\star}\theta$
and where $\theta$ is a function of ${\mathbb U}_2^{\star}$ satisfying
$|\theta|\le 1$.
In order to prove (\ref{5.31}) we  show that
$R_j\prec \R$, for $j=1,2,3$.

Combining (\ref{5.25}) and (\ref{5.30}) we obtain via
Cauchy-Schwartz
$$
|R_3|\le c_*t^2mN^{-5/2}\prec\R.
$$

Furthermore, using the fact that the random variable
${\mathbb U}_1(A_2)$ and the random variables ${\mathbb U}_2^{\star}$ and $W_2$
are independent, we can write
$$
|R_2|
\le
t^2|\E \exp\{it{\mathbb U}_1(A_2)\}|\E|W_2{\mathbb U}_2^{\star}|
\le
c_*t^2e^{-mt^2/8N}m^{1/2}N^{-2}
\prec
\R.
$$
Here we used  (\ref{5.39}) and the moment inequalities (\ref{5.25}) and (\ref{5.30}).
The proof of $R_1\prec\R$ is similar. We arrive at (\ref{5.31}) and, thus, complete
 the proof of (\ref{5.26}).

Let us prove (\ref{5.27}). We proceed in two steps. Firstly we show
\begin{eqnarray}\label{5.32}
&&
h_4\sim h_8+h_9,
\\
\nonumber
&&
h_8=\E\exp\{it({\mathbb U}_1+{\mathbb U}_2)\},
\qquad
h_9=\E\exp\{it({\mathbb U}_1+{\mathbb U}_2)\}itZ.
\end{eqnarray}
Secondly, we show
\begin{equation}\label{5.33}
h_9\sim \E\exp\{it{\mathbb U}_1\}itZ.
\end{equation}

In order to prove (\ref{5.32}) we  write
$$
h_4=h_8+h_9+R,
\qquad
R=\E\exp\{it({\mathbb U}_1+{\mathbb U}_2)\}r,
\qquad
r=\exp\{itZ\}-1-itZ,
$$
and show that $R\prec \R$.
In order to bound the remainder $R$ we write
${\mathbb U}_2={\mathbb U}_2^*+{\mathbb U}_2^{\star}$, see (\ref{5.29}), and expand
the exponent in powers of $it{\mathbb U}_2^*$. We obtain
$R=R_1+R_2$, where
$$
R_1=\E \exp\{it({\mathbb U}_1+{\mathbb U}_2^{\star})\}r
\qquad
{\text{and}}
\qquad
|R_2|\le \E|it{\mathbb U}^*_2r|.
$$
Note that, for $2<s\le 3$, we have  $|r|\le c |tZ|^{s/2}$.
Combining (\ref{5.24}) and (\ref{5.30}) we obtain via Cauchy-Schwartz,
$$
|R_2|
\le
|t|^{1+s/2}\E|Z|^{s/2}|{\mathbb U}_2^*|
\le
c_*|t|^{1+s/2}m^{1/2}N^{-1-s/2}
\prec\R.
$$
In order to prove $R_1\prec\R$ we use the fact that the random variable
${\mathbb U}_1(\Omega_m)$ and the random variables ${\mathbb U}_2^{\star}$ and
 $r$ are
independent. Invoking the inequality $|r|\le t^2Z^2$  we obtain from (\ref{5.39})
and (\ref{5.24})
$$
|R_1|\le t^2|\alpha^m(t)|\E Z^2\le c_* t^2e^{-mt^2/4N}N^{-2}\prec\R.
$$
We thus arrive at (\ref{5.32}).

Let us prove (\ref{5.33}). Use the decomposition (\ref{5.19})  and expand the exponent
 (in $h_9$) in powers of $it{\mathbb Z}_1$ to get
$h_9=h_{10}+R$, where
$$
h_{10}=\E \exp\{it({\mathbb U}_1+{\mathbb Z}_2+{\mathbb Z}_3+{\mathbb Z}_4)\}itZ,
\quad
|R|\le t^2\E|Z{\mathbb Z}_1|.
$$
Combining (\ref{5.20}) and (\ref{5.24}) we obtain via Cauchy-Schwartz
$$
|R|\le c_* t^2 mN^{-5/2}\prec\R.
$$
Therefore, we have
$$
h_9\sim h_{10}.
$$

Now we expand the exponent in
$h_{10}$ in powers of $it({\mathbb Z}_2+{\mathbb Z}_3)$ and obtain
$
h_{10}=h_{11}+h_{12}+R$,
where
$$
h_{11}=\E\exp\{it({\mathbb U}_1+{\mathbb Z}_4)\}itZ,
\qquad
h_{12}=\E\exp\{it({\mathbb U}_1+{\mathbb Z}_4)\}(it)^2Z({\mathbb Z}_2+{\mathbb Z}_3),
$$
and where $|R|\le |t|^3\E |Z|\,|{\mathbb Z}_2+{\mathbb Z}_3|^2$.
Combining (\ref{5.20}) and (\ref{5.24}) we obtain via Cauchy-Schwartz
$|R|\le |t|^3mN^{-3}\prec\R$. Therefore, we have
$$
h_{10}\sim h_{11}+h_{12}.
$$
We complete the proof of (\ref{5.33}), by showing that
\begin{equation}\label{5.34}
h_{11}\sim\E\exp\{it{\mathbb U}_1\}itZ
\qquad
{\text{and}}
\qquad
h_{12}\prec\R.
\end{equation}
In order to prove the second bound  write
$$
h_{12}=R_2+R_3,
\qquad
{\text{where}}
\qquad
R_j=\E\exp\{it({\mathbb U}_1+{\mathbb Z}_4)\}(it)^2Z{\mathbb Z}_j.
$$
We shall show that $R_3\prec\R$.
Using the fact that the random variable ${\mathbb U}_1(A_1)$ and
the random variables $Z$, ${\mathbb Z}_3$ and ${\mathbb Z}_4$ are independent
we obtain from (\ref{5.39})
$$
|R_3|
\le
t^2|\alpha^{m/2}(t)|\E |Z{\mathbb Z}_3|
\le t^2e^{-mt^2/8}m^{1/2}N^{-2}
\prec\R.
$$
In the last step we combined (\ref{5.20}), (\ref{5.24}) and Cauchy-Schwartz.
The proof of $R_1\prec\R$ is similar.

In order to prove the first relation of (\ref{5.34})
we expand the exponent in powers
of $it{\mathbb Z}_4$ and obtain
$h_{11}=\E \exp\{it{\mathbb U}_1\}itZ+R$. Furthermore, combining
(\ref{5.39}), (\ref{5.20}) and (\ref{5.24}) we obtain
$$
|R|
\le
t^2|\alpha^m(t)| \E |Z{\mathbb Z}_4|
\le
c_*t^2e^{-mt^2/4N}N^{-3/2}
 \prec\R.
$$
Thus the proof of (\ref{5.27}) is complete.

Let us prove (\ref{5.28}).
By
symmetry and the independence,
\begin{equation}\label{5.35}
\E e^{it{\mathbb U}_1}it{\mathbb U}_3
=
\binom{N}{3}
h_{13}
\E e^{it{\mathbb U}_*},
\qquad
h_{13}=\E e^{itx_1}e^{itx_2}e^{itx_3}itz.
\end{equation}
Here we denote $z=g_3(X_1,X_2,X_3)$ and write,
$$
{\mathbb U}_1=x_1+x_2+x_3+{\mathbb U}_*,
\qquad
{\mathbb U}_*=\sum_{4\le j\le N}g_1(X_j),
\qquad
x_j=g_1(X_j).
$$
 Furthermore, write
$$
r_j=e^{itx_j}-1-itx_j,
\qquad
v_j=e^{itx_j}-1.
$$

In what follows we expand the exponents in powers of $itX_j$, $j=1,2,3$ and
use the fact that $\E\bigl( g_3(X_1,X_2,X_3)\bigl|X_1,X_2\bigr)=0$ as well as
the obvious symmetry. Thus, we have
\begin{align*}
&
h_{13}=h_{14}+R_1,
\qquad
h_{14}=\E e^{itx_2}e^{itx_3}(it)^2zx_1,
\qquad
R_1=\E e^{itx_2}e^{itx_3}itzr_1,
\\
&
h_{14}=h_{15}+R_2,
\qquad
h_{15}=\E e^{itx_3}(it)^3zx_1x_2,
\qquad
R_2=\E e^{itx_3}(it)^2zx_1r_2
\\
&
h_{15}=h_{16}+R_3,
\qquad
h_{16}=\E (it)^4zx_1x_2x_3,
\qquad
R_3=\E (it)^3zx_1x_2r_3.
\end{align*}
Furthermore, we have
$$
R_1=\E itz_1r_1v_2v_3,
\qquad
R_2=\E (it)^2zx_1r_2v_3.
$$
Invoking the bounds $|r_j|\le |tx_j|^2$
 and $|v_j|\le |tx_j|$ we obtain
\begin{equation}\label{5.36}
h_{13}=h_{16}+R,
\end{equation}
where $|R|\le c|t|^{5}\E |z x_1x_2|\,x_3^2$.
The bound, $|R|\le c_*|t|^5N^{-9/2}$ (which follows, by Cauchy-Schwartz)
in combination with (\ref{5.35}) and (\ref{5.36}) implies
\begin{equation}\label{5.37}
\E e^{it{\mathbb U}_1}it{\mathbb U}_3
\sim
\binom{N}{3}
\E e^{it{\mathbb U}_*} (it)^4w.
\end{equation}
Note that $\binom{N}{3}|w|\le c_*N^{-1}$.
In order to show (\ref{5.28}) we replace $\E e^{it{\mathbb U}_*}$ by $e^{-t^2/2}$.
Therefore, (\ref{5.28}) follows from (\ref{5.37}) and the inequalities
$$
\frac{(it)^4}{N}(\E e^{it{\mathbb U}_*}-e^{-t^2\sigma^2(N-3)/2N})\prec{\R},
\quad
\frac{(it)^4}{N}(e^{-t^2\sigma^2(N-3)/2N}-e^{-t^2/2})\prec{\R}.
$$
The second inequality is a direct consequence of (\ref{A1.4}). The proof of the
first inequality is routine and here omitted.
Thus the proof of (\ref{5.14}) is complete.

\bigskip
{\bf 5.2.3.} Here we show that
\begin{equation}\label{5.38}
\E \exp\{it{\mathbb U}_1+{\mathbb U}_2)\}+ \binom{N}{3} e^{-t^2/2}(it)^4w
\sim {\hat G}(t).
\end{equation}
This relation in combination with (\ref{5.13}) and (\ref{5.14}) implies
$\E e^{it{\mathbb T}}\sim {\hat G}(t)$.

Let $G_U(t)$ denote the two term Edgeworth expansion of the  $U$-
statistic ${\mathbb U}_1+{\mathbb U}_2$. That is, $G_U(t)$ is defined by (\ref{1.2}),
but with $\kappa_4$ replaced by
$\kappa_4^*$, where $\kappa_4^*$ is obtained from $\kappa_4$ after removing
the summand $4\E g(X_1)g(X_2)g(X_3)\chi(X_1,X_2,X_3)$.
Furthermore, let ${\hat G}_U(t)$ denote the Fourier transform of $G_U(t)$.
 It easy to show that
$$
{\hat G}(t)={\hat G}_U(t)+ \binom{N}{3} e^{-t^2/2}(it)^4w.
$$
Therefore, in order to prove (\ref{5.38}) it suffices to show
that ${\hat G}_U(t)\sim\E \exp\{it({\mathbb U}_1+{\mathbb U}_2)\}$.
The bound
$$
\int_{|t|\le t_1}
|{\hat G}_U(t)-\E \exp\{it({\mathbb U}_1+{\mathbb U}_2)\}|
\frac{dt}{|t|}
\le \e_NN^{-1}
$$
where $\e_N\downarrow 0$, was shown by Callaert, Janssen and Veraverbeke (1980)
\cite{Callaert_Janssen_Veraverbeke_1980}
and Bickel, G\"otze and van Zwet (1986) \cite{Bickel_Goetze_vanZwet_1986}.
An inspection
 of their proofs shows that under the moment conditions (\ref{1.3}) one can
replace $\e_n$ by $c_*N^{-\nu}$. This completes the proof of (\ref{5.1}).

\bigskip

For the reader convenience  we formulate in Lemma \ref{L5.1} a known result
 on upper bounds for characteristic functions.

\begin{lem}\label{L5.1} Assume that (\ref{2.3}) holds.
There exists a constant $c_*$ depending on
$D_*,M_*, r, s, \nu_1$ only such that, for $N>c_*$ and
$|t|\le N^{1/2}/10^3\beta_3$ and $B\subset \Omega_N$, we have
\begin{equation}\label{5.39}
|\alpha(t)|\le 1-t^2/4N,
\qquad
\E\exp\{it{\mathbb U}_1(B)\}|\le |\alpha(t)|^{|B|}\le e^{-|B|t^2/4N}.
\end{equation}
Here
$\alpha(t)=\E \exp\{itg_1(X_1)\}$ \
and
\
 ${\mathbb U}_1(B)=\sum_{j\in B}g_1(X_j)$.
\end{lem}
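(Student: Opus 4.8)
\emph{Proof plan.} The plan is to first prove the one–observation bound $|\alpha(t)|\le 1-t^2/4N$ by a second–order Taylor expansion of $\alpha(t)=\E\exp\{itg_1(X_1)\}$, and then obtain the bound for $\mathbb U_1(B)$ for free by independence. Concretely, I would write $e^{ix}=1+ix-\tfrac12x^2+\theta\tfrac16|x|^3$ with $|\theta|\le1$, apply it with $x=tg_1(X_1)=tN^{-1/2}g(X_1)$ and take expectations. Since $g_1$ is centered (see (\ref{2.2})) the linear term vanishes, and using $\E g^2(X_1)=\sigma^2$, $\E|g(X_1)|^3=\sigma^3\beta_3$ (recall $\beta_3=\sigma^{-3}\E|g(X_1)|^3$) this gives
\begin{equation*}
\alpha(t)=1-\frac{t^2\sigma^2}{2N}+R,\qquad |R|\le\frac{|t|^3\sigma^3\beta_3}{6N^{3/2}}.
\end{equation*}
By the power–mean inequality $\beta_3\ge1$, so the hypothesis $|t|\le N^{1/2}/(10^3\beta_3)$ forces $t^2/N\le 10^{-6}$; hence (once $\sigma^2$ is known to be bounded, see below) $\tfrac{t^2\sigma^2}{2N}<1$, so $|1-\tfrac{t^2\sigma^2}{2N}|=1-\tfrac{t^2\sigma^2}{2N}$ and
\begin{equation*}
|\alpha(t)|\le1-\frac{t^2\sigma^2}{2N}\Bigl(1-\frac{|t|\sigma\beta_3}{3N^{1/2}}\Bigr).
\end{equation*}

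Second, I would pin down the two constants $\sigma$ and the parenthetical factor. The constraint on $|t|$ together with $\beta_3\ge1$ gives $\tfrac{|t|\sigma\beta_3}{3N^{1/2}}\le\tfrac{\sigma}{3\cdot10^{3}}$, so it suffices to bound $\sigma$ from above and to keep $\sigma^2$ from dropping much below $1$. This is exactly what the appendix estimates supply: combining (\ref{A1.4}) (cf. (\ref{A1.1}), (\ref{A1.2})) with the normalisation $\sigma_\TT^2=1$ of (\ref{2.3}) yields $\sigma^2=1-O(N^{-1})$, so for $N$ larger than a constant $c_*$ depending only on $D_*,M_*,r,s,\nu_1$ one has, say, $3/4\le\sigma^2\le4$. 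With these bounds the factor is $\ge1-10^{-3}$ and $\tfrac{\sigma^2}{2}\,(1-10^{-3})\ge1/4$, whence $|\alpha(t)|\le1-t^2/4N$.

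Finally, for $\mathbb U_1(B)=\sum_{j\in B}g_1(X_j)$ with the $X_j$ i.i.d., independence gives $\E\exp\{it\mathbb U_1(B)\}=\alpha(t)^{|B|}$, so $|\E\exp\{it\mathbb U_1(B)\}|=|\alpha(t)|^{|B|}\le(1-t^2/4N)^{|B|}\le e^{-|B|t^2/4N}$ by $1-x\le e^{-x}$. There is no substantive obstacle here; the only point requiring care is the bookkeeping of absolute constants in the last two displays, in particular invoking (\ref{A1.4}) to keep $\sigma^2$ near $1$ so that the quadratic–term coefficient $\sigma^2/2$ still dominates the target coefficient $1/4$ after the cubic remainder is absorbed under the stated range of $t$.
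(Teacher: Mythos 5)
Your proposal is correct and follows essentially the same route as the paper: a second-order Taylor expansion of $\alpha(t)$ with a third-moment remainder (the paper's (\ref{A2.7})), the appendix bound (\ref{A1.4}) under the normalisation (\ref{2.3}) to keep $\sigma^2$ near $1$, and then independence together with $1+x\le e^{x}$ for the bound on $\E\exp\{it\,{\mathbb U}_1(B)\}$. The only cosmetic difference is that the paper uses the sharper window $1-10^{-3}\le\sigma^2\le 1$ where you use $3/4\le\sigma^2\le 4$, which still yields the coefficient $1/4$.
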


\begin{proof}
Let us prove the first inequality of (\ref{5.39}).
Expanding the exponent, see (\ref{A2.7}), we obtain
\begin{align*}
|\alpha(t)|
&
\le
\bigl|1-2^{-1}t^2\E g_1^2(X_1)\bigr|+6^{-1}|t|^3\E|g_1(X_1)|^3
\\
&
=
\bigl|1-\sigma^2t^2/2N\bigr|+\beta_3\sigma^3|t|^3/6N^{3/2}
\end{align*}
Invoking the inequality $1-10^{-3}\le \sigma^2\le 1$ which follows from
(\ref{A1.4}) for $N>c_*$, where $c_*$ is sufficiently large, we obtain
$|\alpha(t)|\le 1-t^2/4N$, for $|t|\le N^{1/2}/10^3\beta_3$.

The second inequality of (\ref{5.39}) follows from the first one via the
inequality $1+x\le e^x$, for $ x\in R$.
\end{proof}

\section{Appendix 1}

{\bf 1.1.} Here we compare the
moments $\Delta_m^2$ and $\E R_m^2$, where $R_m$ denotes the remainder
of the expansion (\ref{2.1}),
 $$
\TT=\E \TT+ \UU_1+\dots+\UU_{m-1}+R_m,
\qquad
R_m:=\UU_m+\dots+\UU_N.
$$
For $k=1,\dots, N$, write $\Omega_k=\{1,2,\dots, k\}$ and denote
$
\sigma_k^2:=\E g_k^2(X_1,\dots, X_k)=\E T_{\Omega_k}^2$.
It follows from
(\ref{2.1}), by the orthogonality property (\ref{2.2}), that
\begin{equation}\label{A1.1}
\sigma_\TT^2
=
\sum_{k=1}^N\E {\mathbb U}_k^2,
\qquad
\E R_m^2
=
\sum_{k=m}^N\E {\mathbb U}_k^2,
\qquad
\E {\mathbb U}_k^2
=
\binom{N}{k}\sigma_k^2.
\end{equation}

\begin{lem}\label{LA1.1} Assume that $\E \TT^2<\infty$. Then
\begin{eqnarray}\label{A1.2}
&&
\E R_m^2\le N^{-(m-1)}\Delta_m^2,
\\
\label{A1.3}
&&
\Delta_m^2\le N^{2m-1}\sigma_m^2+N^{-1}\Delta_{m+1}^2,
\end{eqnarray}
Assume that (\ref{1.3}) and (\ref{1.4}) hold, then there exists a constant
$c_*<\infty$ depending on $D_*,M_*,r,s, \nu_1$ such that
\begin{equation}\label{A1.4}
0\le 1-\sigma^2\sigma_\TT^{-2}\le c_*N^{-1}.
\end{equation}
\end{lem}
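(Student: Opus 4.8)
The plan is to reduce every assertion to an exact orthogonal expansion of the iterated differences $D_1\cdots D_m\TT$ in terms of the Hoeffding components $T_A$; once that is available, (\ref{A1.2}) and (\ref{A1.3}) become elementary binomial comparisons and (\ref{A1.4}) follows by chaining them with H\"older's inequality and the hypotheses (\ref{1.3}), (\ref{1.4}). First I would record that the difference operators act diagonally on the decomposition (\ref{2.1}): for $A\subset\Omega_N$ and $i\in\Omega_N$ one has $D_iT_A=T_A$ when $i\in A$ and $D_iT_A=0$ when $i\notin A$. The second case is immediate, since $T_A$ does not depend on $X_i$; in the first, $\E_iT_A=\E\bigl(g_{|A|}(X_j,\,j\in A)\,|\,X_j,\,j\in A\setminus\{i\}\bigr)$ vanishes by the degeneracy property (\ref{2.2}), so $D_iT_A=T_A$. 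Iterating, $D_1\cdots D_m\TT=\sum_{A:\,\Omega_m\subseteq A\subseteq\Omega_N}T_A$, and using the mutual orthogonality of the $T_A$ (again a consequence of (\ref{2.2}), exactly as in (\ref{A1.1})), the identity $\E T_A^2=\sigma_{|A|}^2$, and the count of $\binom{N-m}{k-m}$ sets $A$ with $|A|=k$ that contain $\Omega_m$, I get
\begin{equation*}
\Delta_m^2=N^{2m-1}\,\E\bigl|D_1\cdots D_m\TT\bigr|^2=N^{2m-1}\sum_{k=m}^N\binom{N-m}{k-m}\sigma_k^2 .
\end{equation*}

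Combining this with the identity $\E R_m^2=\sum_{k=m}^N\binom{N}{k}\sigma_k^2$ from (\ref{A1.1}), the inequalities (\ref{A1.2}) and (\ref{A1.3}) reduce to term-by-term binomial estimates. For (\ref{A1.2}) I would use, for $m\le k\le N$,
\begin{equation*}
\frac{\binom{N}{k}}{\binom{N-m}{k-m}}=\prod_{j=0}^{m-1}\frac{N-j}{k-j}\le N^m ,
\end{equation*}
because each factor in the numerator is at most $N$ and each in the denominator at least $1$; summing $\binom{N}{k}\sigma_k^2\le N^m\binom{N-m}{k-m}\sigma_k^2$ over $k$ yields $\E R_m^2\le N^m\cdot N^{-(2m-1)}\Delta_m^2=N^{-(m-1)}\Delta_m^2$. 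For (\ref{A1.3}) I would split off the $k=m$ term, $\Delta_m^2=N^{2m-1}\sigma_m^2+N^{2m-1}\sum_{k=m+1}^N\binom{N-m}{k-m}\sigma_k^2$, and compare the remaining sum with $N^{-1}\Delta_{m+1}^2=N^{2m}\sum_{k=m+1}^N\binom{N-m-1}{k-m-1}\sigma_k^2$ via $\binom{N-m}{k-m}=\frac{N-m}{k-m}\binom{N-m-1}{k-m-1}\le N\binom{N-m-1}{k-m-1}$, which holds since $k-m\ge1$.

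For (\ref{A1.4}): since $g_1=N^{-1/2}g$ one has $\E\UU_1^2=N\sigma_1^2=\sigma^2$, so (\ref{A1.1}) gives $\sigma_\TT^2=\sigma^2+\E R_2^2$ with $\E R_2^2\ge0$; this is the lower bound, and it also gives the identity $1-\sigma^2\sigma_\TT^{-2}=\E R_2^2/\sigma_\TT^2$. For the upper bound I would apply (\ref{A1.2}) with $m=2$ and then (\ref{A1.3}) twice,
\begin{equation*}
\E R_2^2\le N^{-1}\Delta_2^2\le N^{-1}\bigl(N^3\sigma_2^2+N^4\sigma_3^2+N^{-2}\Delta_4^2\bigr)=N^2\sigma_2^2+N^3\sigma_3^2+N^{-3}\Delta_4^2 ,
\end{equation*}
then substitute $\sigma_2^2=N^{-3}\E|\psi(X_1,X_2)|^2$ and $\sigma_3^2=N^{-5}\E|\chi(X_1,X_2,X_3)|^2$, bound $\E|\psi|^2\le(\E|\psi|^r)^{2/r}$ and $\E|\chi|^2\le(\E|\chi|^s)^{2/s}$ by Lyapunov's inequality ($r>4$, $s>2$), and invoke (\ref{1.3}) together with $\Delta_4^2\le N^{1-2\nu_1}D_*\sigma_\TT^2$ from (\ref{1.4}). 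Each of the three terms is then at most a constant depending only on $M_*,D_*,r,s,\nu_1$ times $N^{-1}\sigma_\TT^2$, and dividing by $\sigma_\TT^2$ completes the proof.

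The only genuinely structural point is the first step — the diagonal action of the operators $D_i$ on the Hoeffding components $T_A$ and the resulting closed form for $\Delta_m^2$. After that, everything is routine binomial arithmetic and moment bookkeeping, so I anticipate no serious obstacle.
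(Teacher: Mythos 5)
Your proposal is correct and follows essentially the same route as the paper: the identity $D_1\cdots D_m\TT=\sum_{A\supset\Omega_m}T_A$ with orthogonality gives $\Delta_m^2=N^{2m-1}\sum_{k\ge m}\binom{N-m}{k-m}\sigma_k^2$, and both (\ref{A1.2}) and (\ref{A1.3}) then follow from the same binomial ratio comparisons the paper uses. The only cosmetic difference is in (\ref{A1.4}), where you chain (\ref{A1.2}) at $m=2$ with (\ref{A1.3}) applied twice, whereas the paper expands $\E R_2^2=\binom{N}{2}\sigma_2^2+\binom{N}{3}\sigma_3^2+\E R_4^2$ and applies (\ref{A1.2}) only to the tail; the resulting bounds and the use of Lyapunov's inequality with (\ref{1.3}), (\ref{1.4}) are identical up to constants.
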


{\it Remark.} For $m=3$, the inequality (\ref{A1.3}) yields
$\Delta_3^2\le \zeta_2+N^{-1}\Delta_4^2$.

{\it Proof.} Let us prove (\ref{A1.2}).
The identity
$$
D_1\cdots D_m\TT
=
\sum_{A:\, \Omega_m\subset A\subset \Omega_N}T_A
=
\sum_{m\le k\le N}\UU_{k|m},
$$
where $\UU_{k|m}=\sum_{|A|=k,\, A\supset\Omega_m}T_A$, implies
\begin{equation}\label{A1.5}
\E(D_1\cdots D_m\TT)^2=\sum_{m\le k\le N}\E \UU_{k|m}^2,
\qquad
\E \UU_{k|m}^2=\sigma_k^2 \binom{N-m}{k-m}.
\end{equation}
Write
\begin{equation}\label{A1.6}
\E(D_1D_2\cdots D_m\TT)^2
=
\sum_{m\le k\le N}\sigma_k^2 \binom{N-m}{k-m}
=
\sum_{m\le k\le N}\sigma_k^2 \binom{N}{k} b_k,
\end{equation}
where $b_k=[k]_m/[N]_m$ satisfies $b_k\ge b_m\ge m!N^{-m}$. Here
we denote $[x]_m=x(x-1)\cdots(x-m+1)$.
A comparison of (\ref{A1.1}) and (\ref{A1.6}) shows (\ref{A1.2})
$$
\E R_m^2
\le
N^m\E(D_1\cdots D_m\TT)^2
=
N^{-(m-1)}\Delta_m^2.
$$
Let us prove (\ref{A1.3}). Write
\begin{align*}
\E(D_1\cdots D_m\TT)^2
&
=
\sigma_m^2
+
\sum_{m<k\le N}\sigma_k^2 \binom{N-m}{k-m}
\\
&
=
\sigma_m^2
+
\sum_{m<k\le N}\sigma_k^2 \binom{N-m-1}{k-m-1} {\tilde b}_k,
\end{align*}
where ${\tilde b}_k=(N-m)/(k-m)\le N$. We obtain the inequality
$$
\E(D_1\cdots D_m\TT)^2\le \sigma_m^2+N\E(D_1\cdots D_{m+1}\TT)^2
$$
which implies (\ref{A1.3}).

Let us prove (\ref{A1.4}). From (\ref{A1.1}), (\ref{A1.2}) we have, for
$\sigma^2=N\sigma_1^2$,
$$
0
\le
1-\frac{\sigma^2}{\sigma_{\TT}^2}
\le
\binom{N}{2}\frac{\sigma_2^2}{\sigma_{\TT}^2}
+
\binom{N}{3}\frac{\sigma_3^2}{\sigma_{\TT}^2}
+\frac{1}{N^3} \frac{\Delta_4^2}{\sigma_{\TT}^2}.
$$
Invoking the bounds, which follow from (\ref{1.3}),
$$
N^3\sigma_2^2=\E\psi^2(X_1,X_2)\le M_*^{2/r}\sigma_{\TT}^2,
\qquad
N^5\sigma_3^2=\E\chi^2(X_1,X_2,X_3)\le M_*^{2/s}\sigma_{\TT}^2
$$
and using (\ref{1.4}) we obtain
(\ref{A1.4}).

\bigskip
{\bf 1.2.} Here we prove moments bounds for
various parts of Hoeffding decomposition defined in Section 2.

\begin{lem}\label{LA1.2} Assume that $\sigma_\TT^2=1$. For $3\le m\le N$ and $s>2$, we have
\begin{eqnarray}\label{A1.7}
&&
\E\Lambda_3^2\le \frac{m^3}{N^{5}}\Delta_3^2,
\quad
\E\Lambda_2^2\le \frac{m^2}{N^{4}}\Delta_3^2,
\quad
\E|\Lambda_1|^3\le c\frac{m^3}{N^{9/2}}\gamma_3,
\\
\label{A1.8}
&&
\E |\Lambda_4|^s
\le 
c(s)\, m^{s/2}N^{-3s/2}\zeta_s,
\quad
\E\eta_i^2
\le N^{-4}\Delta_4^2,
\quad
\E \Lambda_5^2\le mN^{-4}\Delta_4^2.
\end{eqnarray}
Here $c$ denotes an absolute constant and $c(s)$ denotes a
constant which depends only on $s$.
\end{lem}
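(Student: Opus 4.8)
The plan is to treat the six estimates in two groups. The four second-moment bounds---for $\E\Lambda_2^2$, $\E\Lambda_3^2$, $\E\eta_i^2$ and $\E\Lambda_5^2$---follow from the orthogonality of the Hoeffding components $\{T_A\}_{A\subset\Omega_N}$ (a consequence of (\ref{2.2}): for $A\neq B$ there is an index $j$ lying in one of them but not the other, and averaging $T_A$ or $T_B$ over $X_j$ gives $0$), combined with elementary binomial estimates. The two remaining bounds, for $\E|\Lambda_1|^3$ and $\E|\Lambda_4|^s$, require moment inequalities for degenerate $U$-statistics.

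\emph{Second-moment bounds.} Recall from (\ref{A1.5})--(\ref{A1.6}) that $\Delta_m^2=N^{2m-1}\sum_{m\le k\le N}\sigma_k^2\binom{N-m}{k-m}$; in particular $\Delta_3^2=N^5\sum_{k\ge 3}\sigma_k^2\binom{N-3}{k-3}$ and $\Delta_4^2=N^7\sum_{k\ge 4}\sigma_k^2\binom{N-4}{k-4}$. Each of $\Lambda_2,\Lambda_3,\eta_i$ is a sum $\sum_{A\in\mathcal A}T_A$ over a family $\mathcal A$ of pairwise distinct index sets, so orthogonality gives $\E(\sum_{A\in\mathcal A}T_A)^2=\sum_{A\in\mathcal A}\sigma_{|A|}^2$; grouping by $k=|A|$, it remains to count the relevant sets. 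For $\Lambda_3$: every $A$ with $|A\cap\Omega_m|\ge 3$ contains a $3$-subset of $\Omega_m$, so the number with $|A|=k$ is at most $\binom{m}{3}\binom{N-3}{k-3}\le m^3\binom{N-3}{k-3}$, and summing against $\sigma_k^2$ gives $\E\Lambda_3^2\le m^3N^{-5}\Delta_3^2$. For $\Lambda_2$: the number of $A$ with $|A|=k$ and $|A\cap\Omega_m|=2$ equals $\binom{m}{2}\binom{N-m}{k-2}$, and since $\binom{N-m}{k-2}\le\binom{N-3}{k-2}=\tfrac{N-k}{k-2}\binom{N-3}{k-3}\le N\binom{N-3}{k-3}$ we obtain $\E\Lambda_2^2\le \tfrac{m^2}{2}N\cdot N^{-5}\Delta_3^2\le m^2N^{-4}\Delta_3^2$. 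For $\eta_i$: the number of $A$ with $|A|=k$ and $A\cap\Omega_m=\{i\}$ equals $\binom{N-m}{k-1}\le\binom{N-1}{k-1}=\tfrac{(N-1)(N-2)(N-3)}{(k-1)(k-2)(k-3)}\binom{N-4}{k-4}\le N^3\binom{N-4}{k-4}$ (here $k\ge 4$), so $\E\eta_i^2\le N^3\cdot N^{-7}\Delta_4^2=N^{-4}\Delta_4^2$. Finally $\Lambda_5=\sum_{i=1}^m\eta_i$, and for $i\neq j$ the sums $\eta_i,\eta_j$ involve disjoint families of index sets and are therefore orthogonal, whence $\E\Lambda_5^2=\sum_{i=1}^m\E\eta_i^2\le mN^{-4}\Delta_4^2$.

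\emph{Higher-moment bounds.} Write $\Lambda_1=N^{-3/2}\sum_{1\le i<j\le m}\psi(X_i,X_j)$; by (\ref{2.2}) $\E(\psi(X_1,X_2)\mid X_2)=0$, and by symmetry also $\E(\psi(X_1,X_2)\mid X_1)=0$, so $\psi$ is a canonical (completely degenerate) kernel of degree $2$. The standard moment inequality for such $U$-statistics (see Dharmadhikari, Fabian and Jogdeo (1968)) yields $\E|\sum_{i<j\le m}\psi(X_i,X_j)|^3\le c\,m^3\E|\psi(X_1,X_2)|^3=c\,m^3\gamma_3$, i.e.\ $\E|\Lambda_1|^3\le c\,m^3N^{-9/2}\gamma_3$. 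For $\Lambda_4=N^{-5/2}\sum_{i=1}^mZ_i$ with $Z_i=\sum_{m<j<k\le N}\chi(X_i,X_j,X_k)$, I condition on $X_{m+1},\dots,X_N$: given these, $Z_1,\dots,Z_m$ are i.i.d., and they are centered because $\E(\chi(X_i,X_j,X_k)\mid X_j,X_k)=0$ by (\ref{2.2}) and the symmetry of $\chi$. Rosenthal's inequality then gives, for $2<s\le 3$, $\E(|\sum_iZ_i|^s\mid X_{m+1},\dots,X_N)\le c(s)[\,m^{s/2}(\E(Z_1^2\mid\cdot))^{s/2}+m\,\E(|Z_1|^s\mid\cdot)\,]$; since $s\ge 2$ the power-mean inequality gives $(\E(Z_1^2\mid\cdot))^{s/2}\le\E(|Z_1|^s\mid\cdot)$, and after taking expectations $\E|\sum_iZ_i|^s\le c(s)\,m^{s/2}\,\E|Z_1|^s$. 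Finally, for fixed $X_1=x$ the sum $Z_1$ is a completely degenerate degree-$2$ $U$-statistic in $X_{m+1},\dots,X_N$ (the kernel $(y,z)\mapsto\chi(x,y,z)$ is canonical, since $\int\chi(x,y,z)P_X(dz)=0$ a.e.\ by (\ref{2.2}) and symmetry), so $\E(|Z_1|^s\mid X_1=x)\le c(s)(N-m)^s\E|\chi(x,X_2,X_3)|^s$ and hence $\E|Z_1|^s\le c(s)N^s\zeta_s$. Combining the last three displays, $\E|\Lambda_4|^s\le c(s)\,N^{-5s/2}m^{s/2}N^s\zeta_s=c(s)\,m^{s/2}N^{-3s/2}\zeta_s$.

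\emph{Expected obstacle.} The second-moment bounds are bookkeeping once the counting is arranged, and $\Lambda_1$ is a direct appeal to a known inequality; the real work is $\E|\Lambda_4|^s$. There one must exploit the \emph{double} degeneracy of $\chi$: canonical in $X_{m+1},\dots,X_N$ for each fixed $X_i$, and---after conditioning on $X_{m+1},\dots,X_N$---centered, which is precisely what keeps the dominant Rosenthal term at order $m^{s/2}$ rather than $m^{s}$. The two degeneracy identities, $\E(\chi(X_1,X_2,X_3)\mid X_2,X_3)=0$ and $\int\chi(x,y,z)P_X(dz)=0$ a.e., both read off from (\ref{2.2}) and the symmetry of $g_3$, are the small but indispensable ingredient, and the non-integer range $2<s\le 3$ merely requires the versions of Rosenthal's and of the $U$-statistic moment inequalities that are valid for real exponents.
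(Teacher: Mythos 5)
Your proof is correct and follows essentially the same route as the paper: orthogonality of the Hoeffding components plus binomial counting for the second-moment bounds on $\Lambda_2,\Lambda_3,\eta_i,\Lambda_5$, and a conditional Rosenthal inequality combined with Dharmadhikari--Fabian--Jogdeo-type moment bounds for degenerate $U$-statistics to handle $\Lambda_1$ and $\Lambda_4$. The only difference is that the paper simply cites Bentkus, G\"otze and van Zwet (1997) for (\ref{A1.7}), whereas you carry out those (standard) counting and moment arguments explicitly.
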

\begin{proof} The inequalities (\ref{A1.7}) are proved in \cite{Bentkus_Goetze_vanZwet_1997}.

Let us prove (\ref{A1.8}).
Split $\Lambda_4=z_1+\dots+z_m$, where
$$
z_i=\sum_{|A|=3,\, A\cap \Omega_m=i}T_A.
$$
Let $\E'$ denote the conditional expectation given $X_{m+1},\dots,
X_N$.
It follows from Rosenthal's inequality that almost surely
$$
\E'|\Lambda_4|^s
\le
c(s)\sum_{i=1}^m\E'|z_i|^s+c(s)\bigl(\sum_{i=1}^m\E' z_i^2\bigr)^{s/2}.
$$
Invoking H\"older's inequality we obtain, by symmetry,
\begin{equation}\label{A1.9}
\E|\Lambda_4|^s
=
\E\E'|\Lambda_4|^s
\le
c(s)m^{s/2}\E|z_1|^s.
\end{equation}
Using well known martingale moment inequalities (and their applications to $U$ statistics), see  Dharmadhikari, Fabian and Jogdeo
(1968) \cite{Dharmadhikari_Fabian_Jogdeo_1968}, one can show the bound
$\E|z_1|^s\le c(s)N^{-3s/2}\zeta_s$.
Invoking this bound in (\ref{A1.9}) we obtain the first bound of
(\ref{A1.8}).

In order to prove the second bound of (\ref{A1.8}) write
$$
 \eta_i=\sum_{k=4}^{N-m+1} U^*_k,
 \qquad
 U^*_k=\sum_{|A|=k,\, A\cap \Omega_m=\{i\}}T_A.
 $$
 A simple calculation shows
 $\E(U^*_k)^2= \binom{N-m}{k-1}\sigma_k^2$. Therefore, by
 orthogonality,
\begin{eqnarray}\nonumber
 \E
 \eta_i^2
&&
 =
 \sum_{k=4}^{N-m+1} \binom{N-m}{k-1}\sigma_k^2
 =
 \sum_{k=4}^{N-m+1} \binom{N-4}{k-4} b_k\sigma_k^2
\\
\label{A1.10}
&&
 \le
 N^3\E(D_1\cdots D_4\TT)^2.
\end{eqnarray}
 In the last step we invoke (\ref{A1.5}) and use  the bound $b_k\le N^3$, where
  $b_k= \binom{N-m}{k-1} \binom{N-4}{k-4}^{-1}$.
Clearly, (\ref{A1.10}) implies $\E \eta_i^2\le N^{-4}\Delta_4^2$. Finally, using the fact that
$\eta_1,\dots, \eta_m$ are uncorrelated we obtain
$$
\Lambda_5^2=\E\eta_1^2+\dots+\E\eta_m^2\le m\,N^{-4}\Delta_4^2.
$$
 thus completing the proof.
\end{proof}

Before formulating  next result we introduce some notation.
Given $m$ let $\D$ denote the class of subsets $A\subset \Omega_N$
satisfying $|A|\ge 4$ and $\Omega_m\cap A\not=\emptyset$.
Introduce the random variable ${\mathbb H}(m)=\sum_{A\in \D}T_A$.
Denote $x_i=2i-1$ and $y_i=2i$. For even integer $m=2k\le N$ write
$$
\Omega_m=A_k\cup B_k,
\qquad
A_k=\{x_1,\dots, x_k\},
\qquad
B_k=\{y_1,\dots, y_k\}
$$
and put $A_0=B_0=\emptyset$.
Let ${\A}(k)$ (respectively ${\B}(k)$)  denote the
 collection of those $A\in \D$ which satisfy $A\cap A_k=\emptyset$
(respectively $A\cap B_k=\emptyset$). Furthermore, let ${\C}(k)$ denote
the collection of  $A\in {\D}$ such that $A\cap A_k\not=\emptyset$ and
$A\cap B_k\not=\emptyset$. Write
$$
{\mathbb H}_A(k)=\sum_{A\in {\A}(k)}T_A,
\qquad
{\mathbb H}_B(k)=\sum_{A\in {\B}(k)}T_A,
\qquad
{\mathbb H}_C(k)=\sum_{A\in {\C}(k)}T_A.
$$
\begin{lem}\label{LA.1.3} There exists an absolute constant $c$ such that,
\begin{equation}\label{A1.11}
\E {\mathbb H}^2(m)\le c\frac{m}{N^4}\Delta_4^2,
\qquad
{\text{for}}
\qquad m=4,5,\dots, N.
\end{equation}
For even integer $m=2k<N$ we have
\begin{equation}\label{A1.12}
\E {\mathbb H}_A^2(k)=\E {\mathbb H}_B^2(k)\le c\frac{k}{N^4}\Delta_4^2,
\qquad
\E {\mathbb H}_C^2(k)\le c \frac{k^2}{N^5}\Delta_4^2.
\end{equation}
\end{lem}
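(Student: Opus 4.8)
The plan is to turn each of the four second moments into a weighted count of index sets by orthogonality, and then estimate those counts by elementary binomial inequalities. Recall that the summands $\{T_A:\,A\subset\Omega_N\}$ of Hoeffding's decomposition are pairwise orthogonal in $L^2$: if $A\ne A'$ and $i$ lies in the symmetric difference of $A$ and $A'$, then conditioning on $(X_j)_{j\ne i}$ leaves one of $T_A,T_{A'}$ measurable while the conditional expectation of the other vanishes by the canonical property (\ref{2.2}); hence $\E T_AT_{A'}=0$, and $\E T_A^2=\sigma_{|A|}^2$. Consequently
\[
\E{\mathbb H}^2(m)=\sum_{A\in\D}\sigma_{|A|}^2,
\qquad
\E{\mathbb H}_A^2(k)=\sum_{A\in{\A}(k)}\sigma_{|A|}^2,
\]
and similarly for ${\mathbb H}_B(k)$ and ${\mathbb H}_C(k)$, so the whole lemma reduces to counting, for each size $j\ge4$, the number of $j$-element sets in the relevant family.

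For the counting I would use crude ``anchor'' bounds. A $j$-element subset of $\Omega_N$ that meets $\Omega_m$ arises by first choosing one of its elements lying in $\Omega_m$ and then the remaining $j-1$ elements freely, so there are at most $m\binom{N-1}{j-1}$ of them. If $A\in{\A}(k)$ then $A$ is disjoint from $A_k$ and, since $\Omega_m=A_k\cup B_k$ and $A\cap\Omega_m\ne\emptyset$, it must meet $B_k$; anchoring on an element of $A\cap B_k$ gives at most $k\binom{N-k-1}{j-1}\le k\binom{N-1}{j-1}$ such sets (and the same bound for ${\B}(k)$ with $A_k,B_k$ interchanged). If $A\in{\C}(k)$ then $A$ meets both $A_k$ and $B_k$ (which already forces $A\cap\Omega_m\ne\emptyset$); anchoring on one element in each gives at most $k^2\binom{N-2}{j-2}$ such sets. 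All of these overcount, which is harmless.

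Next I would compare these binomial coefficients with $\binom{N-4}{j-4}$ via the exact identities
\[
\frac{\binom{N-1}{j-1}}{\binom{N-4}{j-4}}=\frac{(N-1)(N-2)(N-3)}{(j-1)(j-2)(j-3)}\le\frac{N^3}{6},
\qquad
\frac{\binom{N-2}{j-2}}{\binom{N-4}{j-4}}=\frac{(N-2)(N-3)}{(j-2)(j-3)}\le\frac{N^2}{2},
\]
valid uniformly for $4\le j\le N$. Combining the two previous steps and summing against $\sigma_j^2$ yields
\[
\E{\mathbb H}^2(m)\le\frac{mN^3}{6}\sum_{j\ge4}\sigma_j^2\binom{N-4}{j-4},
\qquad
\E{\mathbb H}_C^2(k)\le\frac{k^2N^2}{2}\sum_{j\ge4}\sigma_j^2\binom{N-4}{j-4},
\]
and $\E{\mathbb H}_A^2(k)=\E{\mathbb H}_B^2(k)\le\frac{kN^3}{6}\sum_{j\ge4}\sigma_j^2\binom{N-4}{j-4}$. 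Finally, identity (\ref{A1.5}) with $m=4$ gives $\sum_{j\ge4}\sigma_j^2\binom{N-4}{j-4}=\E(D_1D_2D_3D_4\TT)^2=N^{-7}\Delta_4^2$; substituting produces (\ref{A1.11}) and (\ref{A1.12}) with an absolute constant $c$. The equality $\E{\mathbb H}_A^2(k)=\E{\mathbb H}_B^2(k)$ is, alternatively, immediate from the index permutation swapping $x_i\leftrightarrow y_i$ for $1\le i\le k$, which preserves the joint law of $(X_1,\dots,X_N)$ and carries ${\A}(k)$ onto ${\B}(k)$.

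I do not expect any genuine obstacle; the argument is bookkeeping. The two points needing a little care are that the binomial-ratio estimates hold uniformly in $j$ across the full range $4\le j\le N$ (so that the term-by-term summation against $\sigma_j^2$ is legitimate, including the small high-order terms), and that for $m=N$ the $\A/\B/\C$ splitting is not used, so (\ref{A1.12}) is vacuous there and only the first display is needed — where it recovers the estimate $\E R_4^2\le N^{-3}\Delta_4^2$ of (\ref{A1.2}) as a consistency check.
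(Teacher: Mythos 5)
Your proof is correct, and it reaches the bounds with explicit constants ($c=1/6$ for (\ref{A1.11}) and the first half of (\ref{A1.12}), $c=1/2$ for the $\mathbb H_C$ bound). The underlying mechanism is the same as the paper's — orthogonality of the $T_A$ (via (\ref{2.2})) turns each second moment into $\sum_A\sigma_{|A|}^2$ over the relevant family, and everything is then compared to $\sum_j\sigma_j^2\binom{N-4}{j-4}=N^{-7}\Delta_4^2$ from (\ref{A1.5}) — but your organization is genuinely different and simpler. The paper does not count the families directly: it first treats $m=4$ by splitting $\mathbb H(4)=H_1+\dots+H_4$ according to $|A\cap\Omega_4|$ and bounding the coefficient ratios $\binom{N-4}{j-k}/\binom{N-4}{j-4}\le N^{4-k}$, and then proceeds recursively in $m$ via the increments $d_m=\mathbb H(m+1)-\mathbb H(m)$; for $\mathbb H_C(k)$ it runs a second recursion in $k$, decomposing each increment $d_{[k]}$ into the $2k+1$ mutually orthogonal pieces $d_{x.i},d_{y.i},d_{x.y}$ and bounding each by $N^{-5}\Delta_4^2$. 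Your one-shot anchor counting (at most $m\binom{N-1}{j-1}$ sets of size $j$ meeting $\Omega_m$, at most $k^2\binom{N-2}{j-2}$ meeting both $A_k$ and $B_k$), combined with the uniform ratio bounds $\binom{N-1}{j-1}/\binom{N-4}{j-4}\le N^3/6$ and $\binom{N-2}{j-2}/\binom{N-4}{j-4}\le N^2/2$ for $4\le j\le N$, replaces both recursions and yields all four estimates simultaneously; the overcounting in the anchoring step only inflates the absolute constant, which is harmless. Your two side remarks — the equality $\E\mathbb H_A^2(k)=\E\mathbb H_B^2(k)$ via the label swap $x_i\leftrightarrow y_i$ (the paper instead gets the $\mathbb H_A,\mathbb H_B$ bound from (\ref{A1.11}) by uncorrelatedness of the three pieces of $\mathbb H(m)$), and the $m=N$ consistency with (\ref{A1.2}) — are both sound.
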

\begin{proof}
Let us prove the first bound of (\ref{A1.11}). For $m=4$ we have
$$
{\mathbb H}(4)=H_1+H_2+H_3+H_4,
\qquad
H_k=\sum_{|A|\ge 4,\, |A\cap \Omega_4|=k}T_A.
$$
A calculation shows that, for $k=1,2,3,4$,
$$
\E H_k^2
=
\binom{4}{k}\sum_{j=4}^N\sigma_j^2 \binom{N-4}{j-k}
=
\binom{4}{k}\sum_{j=4}^N\sigma_j^2 \binom{N-4}{j-4} a_k(j),
$$
where the numbers
$$
a_k(j)
=
\frac{ \binom{N-4}{j-k} }{  \binom{N-4}{j-4} }
\le N^{4-k}.
$$
Invoking (\ref{A1.6}) we obtain
\begin{equation}\label{A1.13}
\E H_k^2\le c\,N^{4-k} \E(D_1\cdots D_4\TT)^2=cN^{-3-k}\Delta_4^2.
\end{equation}
Finally, we obtain (\ref{A1.11}) for $m=4$
$$
\E {\mathbb H}^2(4)=\E H_1^2+\dots+\E H_4^2\le cN^{-4}\Delta_4^2.
$$

In order to prove (\ref{A1.11}) for
$m=5,6,\dots$ we apply a recursive argument. Write
\begin{equation}\label{A1.14}
\E {\mathbb  H}^2(m+1)=\E {\mathbb  H}^2(m)+\E d_m^2,
\end{equation}
where $d_m={\mathbb H}(m+1)-{\mathbb H}(m)$ is the sum of those $T_A$ with $|A|\ge 4$
satisfying
$A\cap \Omega_m=\emptyset$ and $A\cap \Omega_{m+1}\not=\emptyset$.
In particular, we have
$$
d_m=\sum_{|A|\ge 3,\, A\cap \Omega_{m+1}=\emptyset}T_{A\cup\{m+1\}}.
$$
Therefore,
$$
\E d_m^2
=
\sum_{j=4}^N\sigma_j^2 \binom{N-m-1}{j-1}
=
\sum_{j=4}^N\sigma_j^2 \binom{N-4}{j-4} c_j,
$$
where the numbers
$$
c_j=\frac{ \binom{N-m-1}{j-1} }{  \binom{N-4}{j-4} }\le N^3.
$$
Invoking (\ref{A1.6}) we obtain $\E d_m^2\le N^{-4}\Delta_4^2$.
This bound together with  (\ref{A1.14}) implies (\ref{A1.11}).

Let us prove (\ref{A1.12}). Note that for $m=2k$ we have
${\mathbb H}(m)={\mathbb H}_A(k)+{\mathbb H}_B(k)+{\mathbb H}_C(k)$ and the summands are
 uncorrelated. Therefore, the first bound of (\ref{A1.12}) follows from (\ref{A1.11}).

Let us show the second inequality of (\ref{A1.12}).
For $k=2$ we have ${\C}(2)\subset {\C}$, where $\C$ denotes the class of subsets
$A\subset \Omega_N$ such that $|A|\ge 4$ and $|A\cap \Omega_4|\ge 2$.
Write ${\mathbb H}_C=\sum_{A\in \C}T_A$.
We have
$$
\E {\mathbb H}_C^2(2)\le \E {\mathbb H}_C^2=\E H^2_2+\E H^2_3+\E H_4^2
\le
cN^{-5}\Delta_4^2.
$$
In the last step we applied (\ref{A1.13}). We obtain (\ref{A1.12}), for $k=2$.

In order to prove the bound (\ref{A1.12}), for $k=3,4,\dots$, we apply a recursive
argument similar to that used in the proof of (\ref{A1.11}).
Denote
$$
d_{[k]}={\mathbb H}_C(k+1)-{\mathbb H}_C(k)=\sum_{A\in {\C}(k+1)\setminus{\C}(k)}T_A.
$$
We shall show that
\begin{equation}\label{A1.15}
\E d_{[k]}^2\le ckN^{-5}\Delta_4^2.
\end{equation}
This bound in combination with the identity
$\E {\mathbb H}_C^2(k+1)=\E {\mathbb H}_C^2(k)+\E d_{[k]}^2$ shows (\ref{A1.12})
for arbitrary $k$.

In order to show (\ref{A1.15}) split the set ${\C}(k+1)\setminus{\C}(k)$
 into $2k+1$ non-intersecting parts
$$
{\C}(k+1)\setminus{\C}(k)
=
\bigl(\cup_{i=1}^k{\C}_{x.i}\bigr)
\cup
\bigl(\cup_{i=1}^k{\C}_{y.i}\bigr)
\cup
{\C}_{x.y},
$$
where we denote
\begin{align*}
&
{\C}_{x.y}=\bigl\{A={\tilde A}\cup\{x_{k+1},y_{k+1}\}:
\ {\tilde A}\cap(B_k\cup A_k)=\emptyset,\  |\tilde A|\ge 2\bigr\},
\\
&
{\C}_{x.i}=\bigl\{A={\tilde A}\cup\{y_{k+1},x_i\}:
\ {\tilde A}\cap(B_k\cup A_{i-1})=\emptyset,\  |\tilde A|\ge 2\bigr\},
\\
&
{\C}_{y.i}=\bigl\{A={\tilde A}\cup\{x_{k+1},y_i\}:
\ {\tilde A}\cap(B_{i-1}\cup A_k)=\emptyset,\  |\tilde A|\ge 2\bigr\}.
\end{align*}
By the orthogonality property ($\E T_AT_{V}=0$ for $A\not=V$),
the random variables
$$
d_{x.i}=\sum_{A\in {\C}_{x.i}}T_A,
\qquad
d_{y.i}=\sum_{A\in {\C}_{y.i}}T_A,
\qquad
d_{x.y}=\sum_{A\in {\C}_{x.y}}T_A
$$
are uncorrelated. Therefore, we have
\begin{equation}\label{A1.16}
\E d_{[k]}^2=\E d_{x.y}^2+\sum_{i=1}^k(\E d_{x.i}^2+\E d_{y.i}^2).
\end{equation}
A calculation shows that
$$
\E d_{x.y}^2
=
\sum_{j=4}^N\sigma_j^2 \binom{N-2k-2}{j-2}
=
\sum_{j=4}^N\sigma_j^2 \binom{N-4}{j-4}v_j,
$$
where the coefficients
$$
v_j=\frac{ \binom{N-2k-2}{j-2} }{ \binom{N-4}{j-4} }\le N^2.
$$
Invoking (\ref{A1.6}) we obtain $\E d_{x.y}^2\le N^{-5}\Delta_4^2$.
The same argument shows $\E d_{x.i}^2=\E d_{y.i}^2\le N^{-5}\Delta_4^2$.
The latter bound in combination with (\ref{A1.16}) shows (\ref{A1.15}). The lemma is
proved.
\end{proof}


\section{Appendix 2}

Here we construct bounds for the probability density function (and its derivatives) of
random variables $g_k^*=(N/M)^{1/2}g_k$,  for $1\le k\le n-1$, where $g_k$ are defined
in (\ref{3.46}).  Since these random variables are
identically distributed it suffices to consider
$$
g_1^*
=
\bigl(\frac{N}{M}\bigr)^{1/2}g_1
=
\frac{1}{\sqrt{M}}\sum_{j=m+1}^{m+M}g(Y_j)
+
\frac{\xi_1}{R}.
$$
Here $R=\sqrt{n\,M\,N\,}$. Introduce the random variables
$$
g_2^*=g_1^*-M^{-1/2}g(Y_{m+1}),
\qquad
g_3^*=g_1^*-M^{-1/2}\bigl(g(Y_{m+1})+g(Y_{m+2})\bigr).
$$
Let $p_i(\cdot)$  denote the probability density function of $g_i^*$,
for $i=1,2,3$.
Recall that the integers $n\approx N^{50\nu}\le N^{\nu_2/10}$ and $M\approx N/n \ge N^{9/10}$
 are introduced in (\ref{3.2}) and the number
$\nu>0$ is defined by (\ref{2.4}).

\begin{lem}\label{LA2.1} Assume that conditions of Theorem 1 are
satisfied.
There exist positive constants $C_*, c_*, c_*'$ depending only on
$M_*, D_*, \delta, r$ and $\nu_1, \nu$ such that, for $i=1,2,3$, we have uniformly in $u\in \RR$
and $N>C_*$
\begin{equation}\label{A2.1}
|p_i(u)|\le c_*,
\quad
|p_i'(u)|\le c_*,
\quad
|p_i''(u)|\le c_*,
\quad
|p_i'''(u)|\le c_*.
\end{equation}
Furthermore, given $w>0$ there exists a constant $C_*(w)$ depending on
$M_*, D_*, \delta, r$, $\nu_1, \nu$  and $w$ such that
uniformly in $z_*\in [-2w,2w]$ and $N>C_*(w)$ we have
\begin{equation}\label{A2.2}
p_i(z_*)\ge c'_*,
\qquad
i=1,2,3.
\end{equation}
\end{lem}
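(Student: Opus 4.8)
The plan is to estimate $p_i$ and its derivatives by Fourier inversion, using that $\xi_1$ is independent of $Y_{m+1},\dots,Y_{m+M}$. With $M_1=M$, $M_2=M-1$, $M_3=M-2$ and $R=\sqrt{nMN}$ one has
\[
\hat p_i(t)=\bigl(\E e^{it M^{-1/2} g(Y_1)}\bigr)^{M_i}\,\E e^{it\xi_1/R}.
\]
Since the characteristic function of $\xi_1$ is supported in $(-1,1)$ (see (\ref{2.7})), the second factor vanishes for $|t|\ge R$, and because $M\approx N/n$ one has $R\approx N$; hence Fourier inversion only involves the range $|t|<R\approx N$. On that range the relevant frequency of $g$ is $u:=tM^{-1/2}$, which satisfies $|u|<\sqrt{nN}\le N^{1/2+25\nu}\le N^{\nu_2+1/2}$ for $N>C_*$, the last inequality by the choice of $\nu$ in (\ref{2.4}); this is exactly the window in which Cram\'er's condition (\ref{1.5}) is effective.

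The second ingredient is the transfer of the needed estimates from $X_1$ to the truncated variable $Y_1$. Since $Y_1$ is $X_1$ conditioned on an event of probability $q_N\ge 1-c_*N^{-2-3\nu}$ (see (\ref{3.16})), the laws of $Y_1$ and $X_1$ are within $c_*N^{-2-3\nu}$ in total variation, whence $\bigl|\E e^{iu g(Y_1)}-\E e^{iu g(X_1)}\bigr|\le c_*N^{-2-3\nu}$ for all $u$, and (using the moment bound on $|g|^r$) $\E g(Y_1)=O(N^{-1})$, $\E g(Y_1)^2=\sigma^2+O(N^{-c})$. Consequently: for $|u|\lesssim\beta_3^{-1}$ the expansion used for (\ref{3.9}) gives $|\E e^{iu g(Y_1)}|\le 1-c\,u^2$, hence $|\hat p_i(t)|\le e^{-c't^2}$ for $|t|\lesssim\beta_3^{-1}M^{1/2}$ (here $M_i u^2\asymp t^2$, and the negligible window $|t|=O(M^{1/2}N^{-1-\nu})$ near $0$, where the transfer loses, is handled by the trivial bound $|\hat p_i|\le1$); and for $\beta_3^{-1}M^{1/2}\lesssim|t|<R$ one has $|u|\in[\,c\beta_3^{-1},N^{\nu_2+1/2}\,]$, so (\ref{1.5}), after rescaling by $\sigma\in(1/2,2)$, yields $|\E e^{iu g(Y_1)}|\le 1-\delta/2$ and therefore $|\hat p_i(t)|\le(1-\delta/2)^{M-2}$, which is smaller than any negative power of $N$ since $M\ge N^{9/10}$. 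Combining these bounds,
\[
|p_i^{(k)}(u)|\le\frac1{2\pi}\int_{|t|<R}|t|^k|\hat p_i(t)|\,dt
\le\frac1{2\pi}\int_{\mathbb R}|t|^k e^{-c't^2}\,dt+R^{k+1}(1-\delta/2)^{M-2}\le c_*,
\qquad k=0,1,2,3,
\]
uniformly for $N>C_*$, which is (\ref{A2.1}).

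For the lower bound (\ref{A2.2}) I would compare $p_i$ with the normal density $\phi_\sigma(z)=(2\pi\sigma^2)^{-1/2}e^{-z^2/2\sigma^2}$. Fix $T_0=N^{1/20}$, so that $T_0=o(M^{1/6})$ and $T_0=o(R)$. A second–order expansion of $\E e^{itM^{-1/2}g(Y_1)}$ (including the $O(N^{-1})$ mean shift) together with $\E e^{it\xi_1/R}=1+O(t^2/R^2)$ gives $\hat p_i(t)=e^{-\sigma^2 t^2/2}(1+o(1))$ uniformly for $|t|\le T_0$; on the complementary range the decay estimates of the previous step and $\int_{|t|>T_0}e^{-\sigma^2 t^2/2}\,dt=o(1)$ show that $\int_{|t|>T_0}|\hat p_i|$ and $\int_{|t|>T_0}e^{-\sigma^2 t^2/2}$ are both $o(1)$. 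Hence
\[
p_i(z)-\phi_\sigma(z)=\frac1{2\pi}\int_{\mathbb R}e^{-itz}\bigl(\hat p_i(t)-e^{-\sigma^2 t^2/2}\bigr)\,dt=o(1)
\]
uniformly in $z\in\mathbb R$; since $1/2<\sigma<2$ for $N>C_*$ by (\ref{A1.4}), this gives $p_i(z)\ge (2\pi)^{-1/2}2^{-1}e^{-2w^2}-o(1)\ge c_*'$ for all $|z|\le 2w$ and $N>C_*(w)$.

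The main obstacle is bookkeeping rather than conceptual: one must keep the $g$-frequency $u=tM^{-1/2}$ inside the Cram\'er window $[\beta_3^{-1},N^{\nu_2+1/2}]$ over the \emph{whole} inversion range $|t|<R\approx N$ — which is precisely where the smallness of $\nu$ relative to $\nu_2$ in (\ref{2.4}) is used — and, simultaneously, keep track of the individually negligible but ubiquitous discrepancies between $X_1$ and the truncated variable $Y_1$ in every moment and characteristic–function estimate.
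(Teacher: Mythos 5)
Your proposal is correct and follows essentially the same route as the paper's own proof: Fourier inversion, with the compact support of the characteristic function of $\xi_1$ restricting the inversion to $|t|<R\approx N$ (hence $g$-frequencies up to $\sqrt{nN}\le N^{\nu_2+1/2}$), a quadratic/Gaussian-type characteristic function bound at small and moderate frequencies, the Cram\'er condition (\ref{1.5}) transferred from $X$ to the truncated variables $Y$ for the large frequencies, and a comparison with the normal density for the lower bound (\ref{A2.2}). The only difference is cosmetic: the paper first standardizes $g_1^*$ (centering by $\E g(Y)$ and scaling by its standard deviation) and derives a quantitative $O(M^{-1/2})$ closeness to $\varphi$ via a telescoping product comparison, whereas you settle for an $o(1)$ comparison with $\phi_\sigma$, which is enough for (\ref{A2.1}) and (\ref{A2.2}).
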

{\it Proof.}
We shall prove (\ref{A2.1}) and (\ref{A2.2}) for $i=1$. For $i=2,3$, the proof is almost the same.
Before starting the proof we  introduce some notation and collect  auxiliary results.

Denote
\begin{align*}
&
\theta=\E g_1^*=M^{1/2}\theta_1,
\qquad
\theta_1=\E g(Y_{m+1}),
\\
&
s^2=\E(g(Y_{m+1})-\theta_1)^2,
\qquad
{\tilde \beta}_3=s^{-3}\E|g(Y_{m+1})-\theta_1|^3.
\end{align*}
It follows from $\E g(X_{m+1})=0$
that
$$
\theta_1
=
q_N^{-1}\E g(X_{m+1})\II_{A_{m+1}}
=
-q_N^{-1}\E g(X_{m+1})(1-\II_{A_{m+1}}).
$$
Therefore, by Chebyshev's inequality, for $\alpha=3/(r+2)$ we have
\begin{equation}\label{A2.3}
|\theta_1|\le
q_{N}^{-1}N^{-\alpha(r-1)}\E|g(X_{m+1})|\,\|Z_{m+1}'\|_r^{r-1}
\le
c_*N^{-3/2}.
\end{equation}
In the last step we invoke the inequalities $\alpha(r-1)\ge 1+(r-1)/(r+2)\ge 3/2$
and $q_N^{-1}\le c_*$, see (\ref{3.17}), and $\E|g(X_{m+1})|\,\|Z_{m+1}'\|_r^{r-1}\le M_*$,
where the latter inequality follows from (\ref{1.3}) by H\"older
inequality.

Similarly, the identities
$$
s^2
=
q_N^{-1}\E g^2(X_{m+1})\II_{A_{m+1}}-\theta_1^2
=
q_N^{-1}\sigma^2-q_N^{-1}\E
g^2(X_{m+1})(1-\II_{A_{m+1}})-\theta_1^2
$$
in combination with (\ref{3.16}) and the inequalities
$$
\E g^2(X_{m+1})(1-\II_{A_{m+1}})
\le
N^{-\alpha(r-2)}
\E g^2(X_{m+1})\,\|Z_{m+1}'\|_r^{r-2}\le N^{-\alpha(r-2)}M_*
$$
and  $\alpha(r-2)=1+2(r-4)/(r-2)\ge 1$ yield
\begin{equation}\label{A2.4}
|s^2-\sigma^2|\le c_*N^{-1}.
\end{equation}

Introduce the random variables
$$
g_*=S+\frac{\xi_1}{s\,R},
\quad
S=w_1+\dots+w_M,
\quad
w_j=\frac{g(Y_{m+j})-\theta_1}{M^{1/2}s}.
$$
We have  $g_*=s^{-1}(g_1^*-\theta)$. Let $p(\cdot)$ denote the density function
of $g_*$. Note that $p_1(u)=s^{-1}p\bigl(s^{-1}(u-\theta)\bigr)$. Furthermore, we
have, by (\ref{A2.3}),
$|\theta|\le c_*N^{-1}$ and, by (\ref{A2.4}), (\ref{A1.4}), $|s^2-1|\le c_*N^{-1}$.
Therefore, it suffices to prove (\ref{A2.1})
and (\ref{A2.2})  for $p(\cdot)$ (the latter inequality we verify for every $z_*\in [-3w,3w]$).

\smallskip

In order to prove (\ref{A2.1}) and (\ref{A2.2}) we approximate the
characteristic function ${\hat p}(t)=\E e^{itg_*}$ by $e^{-t^2/2}$
 and then apply  a Fourier inversion formula.
Write
$$
{\hat p}(t)
=
\E e^{itg_*}
=
\gamma^M(t)\tau\Bigl(\frac{t}{s R}\Bigr),
\qquad
\gamma(t):=\E e^{itw_1},
\quad
\tau(t):=\E e^{it\xi_1}.
$$
The fact that $\tau(t)=0$, for $|t|\ge 1$, implies
${\hat p}(t)=0$, for $|t|>s\, R$.
Therefore,
we obtain from  the Fourier inversion formula,
$$
p(x)=
\frac{1}{2\pi}\int_{-\infty}^{+\infty}
e^{-itx}{\hat p}(t)dt
=
\frac{1}{2\pi}\int_{-s\, R}^{s\, R}
e^{-itx}{\hat p}(t)dt.
$$

Write ${\hat p}(t)-e^{-t^2/2}=r_1(t)+r_2(t)$, where
$$
r_1(t)=(\gamma^M(t)-e^{-t^2/2})\tau(t/sR),
\qquad
r_2(t)=e^{-t^2/2}\bigl(\tau(t/sR)-1\bigr).
$$
We shall show below that
\begin{equation}\label{A2.5}
\int_{|t|\le sR}|r_i(t)|dt\le c_*M^{-1/2},
\qquad
i=1,2.
\end{equation}
These bounds in combination with the simple inequality
$$
\int_{|t|\ge sR}e^{-t^2/2}dt\le c_*M^{-1/2}
$$
show that
\begin{equation}\label{A2.6}
|p(x)-\varphi(x)|\le c_*M^{-1/2},
\qquad
 x\in \RR.
\end{equation}
 Here $\varphi$ denotes the standard normal density function
 $$
 \varphi(x)
 =
 \frac{1}{\sqrt{2\pi}}e^{-x^2/2}
 =
 \frac{1}{2\pi}\int_{-\infty}^{+\infty}e^{-itx}e^{-t^2/2}dt
$$

It follows from (\ref{A2.6}) that
$$
|p(x)|\le c_*,
\qquad
x\in \RR.
$$
 Furthermore,
given $w$  we have uniformly in
$|z_*|\le 3w$
$$
|p(z_*)|\ge \varphi(3w)-c_*M^{-1/2}\ge c_*'>0,
$$
for sufficiently large $M$ (for $N>C_*(w)$).

In order to prove an upper bounds for the $k-$th derivative,
$|p^{(k)}(x)|\le c_*$, write
$$
p^{(k)}(t)
=
\frac{1}{2\pi}\int_{-\infty}^{+\infty}(-it)^k\exp\{-itx\}{\hat p}(t)dt,
\qquad
k=1,2,3,
$$
and replace ${\hat p}(t)$ by $e^{-t^2/2}$ as in the proof of
(\ref{A2.6}).
We obtain
$$
p^{(k)}(x)
=
\frac{1}{2\pi}\int_{-\infty}^{+\infty}(-it)^k\exp\{-itx\}e^{-t^2/2}(t)dt
+r,
\qquad
|r|\le c_*M^{-1/2}.
$$
This implies $|p^{(k)}(x)-\varphi^{(k)}(x)|\le c_*M^{-1/2}$. We arrive at the desired bound
$|p^{(k)}(x)|\le c_*$, for $k=1,2,3$.

In the remaining part of the proof we verify (\ref{A2.5}).
For $i=2$ this bound follows from
$|\tau(t/sR)-1|\le ct^2/(sR)^2$. The latter inequality is a consequence of the short expansion
$$
\bigl|
\E \exp\{it\xi_1/sR\}-1-\E it\xi_1/sR
\bigr|
\le \E(t\xi_1)^2/2(sR)^2
$$
and  $\E \xi_1=0$ and  $\E \xi_1^2\le c$,
for some absolute constant $c$.

Let us prove (\ref{A2.5}) for $i=1$.
Introduce the sequence of i.i.d. centered Gaussian random variables
$\eta_1,\,\eta_2,\,\dots$ with variances $\E \eta_i^2=M^{-1}$.
Denote
$$
f(t)=\E e^{it\eta_1}= e^{-t^2/(2M)}
\qquad
{\text{and}}
\qquad
\delta(t)=\gamma(t)-f(t).
$$
We are going to apply the well known inequality
\begin{equation}\label{A2.7}
\bigl|
e^{iv}
-
\bigl(
1+\frac{iv}{1!}+\frac{(iv)^2}{2!}+\dots+\frac{(iv)^{k-1}}{(k-1)!}
\bigr)
\bigr|
\le
\frac{|v|^k}{k!}.
\end{equation}
It follows from (\ref{A2.7}) and   identities $\E \eta_1^i=\E w_1^i$,
$i=1,2$, that
\begin{equation}\label{A2.8}
|\delta(t)|
\le
\frac{|t|^3}{3!}\bigl(\E |w_1|^3+\E |\eta_1|^3\bigr)\le c|t|^3\E|w_1|^3.
\end{equation}
Here we use the inequality $\E|\eta_1|^3\le c\E|w_1|^3$, which follows
from $\E \eta_1^2=\E w_1^2$.

Combining (\ref{A2.8}) and the simple identity
$$
\gamma^M(t)-f^M(t)
=
\delta(t) \sum_{k=1}^M\gamma^{M-k}(t)f^{k-1}(t)
$$
we obtain
\begin{equation}\label{A2.9}
|\gamma^M(t)-f^M(t)|\le c|t|^3Z(t)\,M^{-1/2}{\tilde \beta}_3.
\end{equation}
Here we denote
$$
Z(t)=\max_{r+v=M-1}|f^r(t)\gamma^v(t)|.
$$
We shall show below that
\begin{equation}\label{A2.10}
Z(t)\le \exp\Bigl\{-\frac{t^2}{3}\frac{M-1}{M}\Bigr\}+\exp\{-\delta''(M-1)/2\},
\qquad
0\le |t|\le sR,
\end{equation}
where $\delta''>0$ depends on $\delta, A_*, D_*, M_*, \nu_1$ and it is
given in (\ref{3.9}).
This inequality in combination with (\ref{A2.9}) proves (\ref{A2.5}).

Let us prove (\ref{A2.10}).
Clearly, $Z\le |f^{M-1}(t)|+|\gamma^{M-1}(t)|$. Furthermore,
$f^M(t)=e^{-t^2/2}$. In order to prove (\ref{A2.10}) we shall show
\begin{eqnarray}\label{A2.11}
&&
|\gamma^M(t)|\le e^{-t^2/3},
\qquad
0\le |t|\le M^{1/2}/{\tilde \beta}_3,
\\
\label{A2.12}
&&
|\gamma(t)|\le e^{-\delta''/2},
\qquad
M^{1/2}/{\tilde \beta}_3\le |t|\le sR.
\end{eqnarray}
To show (\ref{A2.11}) we expand $e^{itw_1}$ using (\ref{A2.7}),
\begin{align*}
|\gamma(t)|
=
\bigl| \E e^{itw_1}\bigr|
&
\le
\bigl| 1-\frac{t^2}{2}\E w_1^2 \bigr|+\frac{|t|^3}{3!}\E |w_1|^3
\\
&
=
\bigl| 1-\frac{t^2}{2M} \bigr|+\frac{|t|^3}{3!}\frac{{\tilde \beta}_3}{M^{3/2}}
\\
&
=
1-\frac{t^2}{2M}\bigl(1-\frac{|t|}{3}\frac{{\tilde \beta}_3}{\sqrt M}\bigr)
\\
&
\le 1-\frac{t^2}{3M}.
\end{align*}
Here we used the
identity $|1-t^2/2M|=1-t^2/2M$,
which holds for $|t|<M^{1/2}/{\tilde \beta}_3$, since ${\tilde \beta}_3\ge 1$.
Finally, an application of the inequality $1-x\le e^{-x}$ to
$x=t^2/3M>0$
completes the proof of (\ref{A2.11}).

Let us prove (\ref{A2.12}).
For $\delta''$ defined by (\ref{3.9}) we shall show $\delta''\le 2{\tilde \delta}$, where
\begin{align*}
{\tilde \delta}
&
=
1-\sup\{|\gamma(t)|:\, M^{1/2}{\tilde \beta}_3^{-1}\le |t|\le sR\}
\\
&
=
1
-
\sup\{
|\E\exp\{iu\sigma^{-1}g(Y_{m+1})|:\, \sigma/s\,{\tilde \beta}_3\le |u|\le \sigma \sqrt {n\, N}
\}.
\end{align*}
We are going to replace $g(Y_{m+1}),\, {\tilde \beta}_3,\, s^2$
by $X_{m+1}, \,\beta_3, \,\sigma^2$
respectively. Write
\begin{align*}
&
\E e^{ivg(Y_{m+1})}
=
q_N^{-1}\E e^{ivg(X_{m+1})}\II_{A_{m+1}}
=
\E e^{ivg(X_{m+1})}+r_1+r_2,
\\
&
r_1=q_N^{-1}\E e^{ivg(X_{m+1})}\bigl(\II_{A_{m+1}}-1\bigr),
\qquad
r_2=(q_N^{-1}-1)\E e^{ivg(X_{m+1})}.
\end{align*}
It follows from (5.A5) that, for every $v\in \RR$,
$$
|r_1|\le q_N^{-1}\E|\II_{A_{m+1}}-1|=q_N^{-1}-1\le c_*N^{-2},
\qquad
|r_2|\le q_N^{-1}-1\le c_*N^{-2}.
$$
These bounds imply
\begin{equation}\label{A2.13}
|\E e^{ivg(Y_{m+1})}-\E e^{ivg(X_{m+1})}|\le c_*N^{-2},
\qquad
{\text{ for
every}}
\qquad
v\in \RR.
\end{equation}
One can show that, for sufficiently large $N$ (i.e., for $N>C_*$), we have
\begin{equation}\label{A2.14}
|{\tilde \beta}_3/\beta_3-1|<1/5,
\qquad
|s^2/\sigma^2-1|<1/5,
\qquad
|s^2-1|\le 1/5.
\end{equation}
Using (\ref{A2.13}), (\ref{A2.14}) we get, for $N>C_*$,
\begin{align*}
{\tilde \delta}
&
\ge
1
-
\sup\bigl\{
|\E e^{iu\sigma^{-1}g(Y_{m+1})}|:\, (2\beta_3)^{-1}\le |u|\le  N^{(1+50\nu)/2}
\bigr\}
\\
&
\ge
1-
\sup\bigl\{
|\E e^{iu\sigma^{-1}g(X_{m+1})}|:\, (2\beta_3)^{-1}\le |u|\le  N^{(\nu_2+1)/2}
\bigr\}-c_*N^{-2}
\\
&
\ge \delta''/2.
\end{align*}
We obtain  $|\gamma(t)|\le 1-{\tilde \delta}\le 1-\delta''/2$ and,
therefore, $|\gamma(t)|\le e^{-\delta''/2}$.


\section{Appendix 3}

The main results of this section are moment inequalities of Lemma
\ref{LA3.2} and corresponding inequalities for conditional moments of
Lemma \ref{LA3.3}. Lemma \ref{LA3.1} provides an auxiliary inequality.

We start with some notation.
We call $v=v(\cdot),u=u(\cdot)\in L^r$ orthogonal if $\langle u,v \rangle=0$, where
$$
\langle u,v \rangle=\int_{\X}u(x)v(x)P_X(dx)=\E u(X_1)v(X_1).
$$
Given $f\in L^2(P_X)$ we have for the kernel $\psi^{**}$ defined
in (\ref{3.14})
$$
\E
\psi^{**}(X_1,X_2)\bigl( f(X_1)g(X_2)+f(X_2)g(X_1)\bigr)=0
$$
and almost surely
\begin{eqnarray}\label{A3.1}
&&
\E(\psi^{**}(X_1,X_2)|X_1)=0,
\\
\label{A3.2}
&&
 \E\bigl(\psi^{**}(X_1,X_2)g(X_1)|X_2\bigr)
=
0.
\end{eqnarray}
The latter identity says that almost all values of the $L^r$-valued random variable
$\psi^{**}(\cdot,X_2)$ are orthogonal to the  vector $g(\cdot)\in L^r$.

Let
$
p_g:L^r\to L^r
$
denote the projection on the subspace of elements $u\in L^r$ which are
orthogonal to $g=g(\cdot)$. For $v\in L^r$, write $v^*=p_g(v)$.
It follows from  (\ref{A3.2}) that
\begin{equation}\label{A3.3}
\psi^*(\cdot, Y_j)
\,
\Bigl(=p_g\bigl( \psi(\cdot, Y_j)\bigr)\Bigr)
=
\psi^{**}(\cdot, Y_j)+g(Y_j) b^*(\cdot),
\end{equation}
where
$
b^*(\cdot)=p_g(b(\cdot))=
\sigma^{-2}p_g\bigl(\, \E(\psi(\cdot,X_1)g(X_1)\,
\bigr)$. Denote
$$
U_k^*
\,
\bigl(=
p_g(U_k)
\bigr)=\frac{1}{\sqrt N}\sum_{j\in O_k}\psi^*(\cdot, Y_j),
\qquad
U_k^{**}=N^{-1/2}\sum_{j\in O_k}\psi^{**}(\cdot,Y_j),
$$
where the $L^r$-valued random variables
$U_k$
are introduced in (\ref{3.63}). For  the random variables $g_k$ and $L_k$ introduced in (\ref{3.44}) and
(\ref{3.46}),
we have
\begin{equation}\label{A3.4}
U_k^*
=
U_k^{**}+L_k\, b^*(\cdot)
=
U_k^{**}+(g_k-\frac{1}{\sqrt n}\frac{\xi_k}{N})b^*(\cdot).
\end{equation}

Denote $K=\E|\psi(X_1,X_2)|^r$ and $K_s=\E|\psi^{**}(X_1,X_2)|^s$, $s\le r$.

\begin{lem}\label{LA3.1} Let $4<r\le 5$. For $s\le r$, we have
\begin{equation}\label{A3.5}
K_s^{r/s}
\le
K_r\le
c\,K\Bigl(1+\frac{\E|g(X_1)|^r}{\sigma^r}\Bigr)^2.
\end{equation}
\end{lem}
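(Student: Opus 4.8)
The plan is to dispatch the two inequalities in (\ref{A3.5}) separately: the left one is immediate, and the right one is a triangle‑inequality estimate once $\|b\|_r$ is under control. For the left inequality $K_s^{r/s}\le K_r$ it suffices to apply Lyapunov's inequality to the single random variable $\psi^{**}(X_1,X_2)$: since $s\le r$,
$$
K_s^{1/s}=\bigl(\E|\psi^{**}(X_1,X_2)|^s\bigr)^{1/s}\le\bigl(\E|\psi^{**}(X_1,X_2)|^r\bigr)^{1/r}=K_r^{1/r},
$$
and raising to the $r$‑th power gives $K_s^{r/s}\le K_r$ (if $K_r=\infty$ there is nothing to prove, and finiteness of $K_r$ follows from the second bound, $K<\infty$, $\sigma>0$, $\|g\|_r<\infty$ being guaranteed by (\ref{1.3})).

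For the right inequality I would first bound $\|b\|_r$, where $b$ is the function defined in Section~1 and entering (\ref{3.14}). Write $b=\sigma^{-2}h-(\kappa/2\sigma^4)g$ with $h(x)=\E\bigl(\psi(x,X_2)g(X_2)\bigr)$ and $\kappa=\E\psi(X_1,X_2)g(X_1)g(X_2)$. By H\"older's inequality with exponents $r$ and $r/(r-1)$, and since $r/(r-1)<2$ (as $r>4$) gives $\|g\|_{r/(r-1)}\le\|g\|_2=\sigma$ by Lyapunov, one has $|h(x)|\le\sigma\,\|\psi(x,\cdot)\|_r$; integrating the $r$‑th power over $X_1$ yields $\|h\|_r\le\sigma K^{1/r}$. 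By Cauchy--Schwarz and independence, $|\kappa|\le\|\psi\|_2\bigl(\E g^2(X_1)g^2(X_2)\bigr)^{1/2}=\sigma^2\|\psi\|_2\le\sigma^2 K^{1/r}$, so $\|(\kappa/2\sigma^4)g\|_r\le(2\sigma^2)^{-1}K^{1/r}\|g\|_r$. Adding,
$$
\|b\|_r\le\sigma^{-1}K^{1/r}\bigl(1+\|g\|_r/(2\sigma)\bigr)\le\sigma^{-1}K^{1/r}c_g,\qquad c_g:=1+\|g\|_r/\sigma .
$$

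Next I would apply the triangle inequality in $L^r(P_X\times P_X)$ to the definition (\ref{3.14}), using that $(x,y)\mapsto b(x)g(y)$ has $L^r$‑norm $\|b\|_r\|g\|_r$:
$$
K_r^{1/r}=\|\psi^{**}\|_r\le\|\psi\|_r+2\|b\|_r\|g\|_r\le K^{1/r}\bigl(1+2c_g\lambda\bigr),\qquad\lambda:=\|g\|_r/\sigma .
$$
Since $1+2c_g\lambda=1+2\lambda+2\lambda^2\le 2(1+\lambda)^2=2c_g^2$, and by convexity of $t\mapsto t^r$ one has $c_g^r=(1+\lambda)^r\le 2^{r-1}(1+\lambda^r)=2^{r-1}\bigl(1+\E|g(X_1)|^r/\sigma^r\bigr)$, raising the previous display to the $r$‑th power gives
$$
K_r\le K(2c_g^2)^r=2^r K\,c_g^{2r}\le 2^{3r-2}K\Bigl(1+\frac{\E|g(X_1)|^r}{\sigma^r}\Bigr)^2,
$$
and $2^{3r-2}\le 2^{13}$ for $r\le 5$ serves as the absolute constant $c$ in (\ref{A3.5}).

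There is no genuine obstacle here: the argument is a short chain of H\"older and Cauchy--Schwarz estimates. The only points requiring a little care are the bookkeeping needed to land on exactly the shape $(1+\E|g|^r/\sigma^r)^2$ on the right — this is what forces the convexity step converting powers of $\|g\|_r/\sigma$ into $\E|g|^r/\sigma^r$ — and the (mild) use of $r>4$ to keep $\|g\|_{r/(r-1)}\le\|g\|_2$; along the way one also verifies $b\in L^r$, hence $\psi^{**}\in L^r$, so that the statement is meaningful.
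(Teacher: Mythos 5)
Your proposal is correct and follows essentially the same route as the paper: Lyapunov for the left inequality, and for the right one a bound on $b$ in $L^r$ via H\"older/Cauchy--Schwarz together with $|\kappa|\le\sigma^2K^{1/r}$, then combining with the definition (\ref{3.14}). The only differences are cosmetic — you work with the $L^r$ triangle inequality and a convexity step to reach the shape $(1+\E|g|^r/\sigma^r)^2$, whereas the paper applies the elementary inequalities $|a+b+c|^r\le 3^r(|a|^r+|b|^r+|c|^r)$ and $|a+b|^r\le 2^r(|a|^r+|b|^r)$ directly at the level of $r$-th moments — and both yield the stated bound.
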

\begin{proof}
The first inequality of (\ref{A3.5}) is a consequence of
Lyapunov's inequality. Let us prove the second inequality.
The inequality $|a+b+c|^r\le 3^r(|a|^r+|b|^r+|c|^r)$ implies
$$
K_r
=
\E|\psi^{**}(X_1,X_2)|^r
\le
 3^r\bigl(K+2\E|b(X_1)|^r\E|g(X_2)|^r\bigr).
$$
Therefore, (\ref{A3.5}) is a consequence of the inequalities
\begin{align*}
&
\E|b(X_1)|^r\le
\frac{2^r}{\sigma^r}K+\frac{|\kappa|^r}{\sigma^{4r}}\E|g(X_1)|^r,
\\
&
\kappa^2\le \sigma^4\E\psi^2(X_1,X_2)\le \sigma^4K^{2/r}.
\end{align*}
Here $\kappa=\E \psi(X_1,X_2)g(X_1)g(X_2)$.
To prove the first inequality use
$|a+b|^r\le 2^r(|a|^r+|b|^r)$ to get
$$
\E|b(X_1)|^r\le
\frac{2^r}{\sigma^{2r}}\E\bigl|\E\bigl(\psi(X_1,X_2)g(X_2)\bigr| X_1\bigr)\bigr|^r
+
\frac{\kappa^r}{\sigma^{4r}}\E|g(X_1)|^r.
$$
Furthermore, by Cauchy--Schwartz,
$$
\bigl|\E\bigl(\psi(X_1,X_2)g(X_2)\,| \, X_1\bigr)\bigr|
\le
\bigl(\E(\psi^2(X_1,X_2)\,|\,X_1)\bigr)^{1/2}\sigma.
$$
Finally, Lyapunov's inequality implies
$$
\bigl(\E(\psi^2(X_1,X_2)\,|\,X_1)\bigr)^{r/2}
\le
\E(|\psi(X_1,X_2)|^r\,|\,X_1).
$$
We obtain
$\E\bigl|\E\bigl(\psi(X_1,X_2)g(X_2)\,| \,X_1\bigr)\bigr|^r\le K\sigma^r$
thus completing the proof.
\end{proof}

\smallskip
%
\begin{lem}\label{LA3.2} Let $1\le k\le n-1$. For ${\overline U}_k^*$,
 an independent copy of $U_k^*$,  we have
\begin{eqnarray}\label{A3.6}
&&
2\delta_3^2-\frac{c_*}{N^{\alpha(r-4)}}
\le
\frac{N}{M}
\E\|U_k^*-{\overline U}_k^*\|_2^2
\le
2\delta_3^2+c_*,
\\
\label{A3.7}
&&
\E\|U_k-{\overline U}_k\|_r^r\le c_*\Bigl(\frac{M}{N}\Bigr)^{r/2}.
\end{eqnarray}
Recall that $\delta_3^2=\E|\psi^{**}(X_1,X_2)|^2$.
\end{lem}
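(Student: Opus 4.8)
\emph{Plan.} The plan is to reduce both bounds to one-summand estimates, using that for $k\le n-1$ the block $O_k$ has exactly $M$ elements. Let $\{\overline Y_j\}$ be the independent copy of $\{Y_j\}$ producing $\overline U_k$, and set $W_j=\psi(\cdot,Y_j)-\psi(\cdot,\overline Y_j)\in L^r$, so that $W_j$, $j\in O_k$, are i.i.d.\ with $\E W_j=0$, and $W_j^*:=p_g(W_j)=\psi^*(\cdot,Y_j)-\psi^*(\cdot,\overline Y_j)$. Then $U_k-\overline U_k=N^{-1/2}\sum_{j\in O_k}W_j$ and $U_k^*-\overline U_k^*=N^{-1/2}\sum_{j\in O_k}W_j^*$, and since $\E W_j^*=0$ exactly, all cross terms in the second moment vanish:
\[
\frac NM\,\E\|U_k^*-\overline U_k^*\|_2^2=\E\|W_1^*\|_2^2=2\,\E\|\psi^*(\cdot,Y)\|_2^2-2\,\big\|\E\psi^*(\cdot,Y)\big\|_2^2,
\qquad
\E\|U_k-\overline U_k\|_r^r=N^{-r/2}\,\E\Big\|\sum_{j\in O_k}W_j\Big\|_r^r .
\]

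\emph{Proof of (\ref{A3.6}).} First I would show the mean term is negligible: by (\ref{A3.1}) and $\E g(X_1)=0$ one has $\E\psi^*(\cdot,X_2)=0$, hence $\E\psi^*(\cdot,Y)=-q_N^{-1}\E[\psi^*(\cdot,X_2)\,\mathbb I_{\{\|\psi(\cdot,X_2)\|_r>N^\alpha\}}]$, whose $L^2$-norm is at most $q_N^{-1}c_g N^{-\alpha(r-1)}\E|\psi(X_1,X_2)|^r\le c_*N^{-3/2}$ by (\ref{3.7}), $\alpha(r-1)\ge 3/2$, and (\ref{1.3}). For the main term I would pass from the truncated law of $Y$ to $P_X$ via $\E\|\psi^*(\cdot,Y)\|_2^2=q_N^{-1}\E[\|\psi^*(\cdot,X_2)\|_2^2\,\mathbb I_{\{\|\psi(\cdot,X_2)\|_r\le N^\alpha\}}]$, together with the identity $\E\|\psi^*(\cdot,X_2)\|_2^2=\delta_3^2+\sigma^2\|b^*\|_2^2$, which follows from (\ref{A3.3}), the vanishing of $\E[g(X_2)\langle\psi^{**}(\cdot,X_2),b^*\rangle]$ (the symmetrized form of (\ref{A3.2})), and $\sigma^2=\E g^2(X_1)$. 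Using $1-q_N\le c_*N^{-2-3\nu}$ from (\ref{3.16}), $q_N^{-1}\le c_*$ from (\ref{3.17}), and the tail estimate $\E[\|\psi^*(\cdot,X_2)\|_2^2\,\mathbb I_{\{\|\psi(\cdot,X_2)\|_r>N^\alpha\}}]\le c_g^2N^{-\alpha(r-2)}\E|\psi(X_1,X_2)|^r\le c_*N^{-\alpha(r-4)}$ (from (\ref{3.5}), (\ref{3.7}), $\xi^2\mathbb I_{\{\xi>t\}}\le t^{2-r}\xi^r$, and $\alpha(r-2)\ge\alpha(r-4)$), one gets $\delta_3^2-c_*N^{-\alpha(r-4)}\le\E\|\psi^*(\cdot,Y)\|_2^2\le\delta_3^2+c_*$. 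Here $\sigma^2\le 1$ by (\ref{A1.4}), $\|b^*\|_2\le\|b\|_2\le c_*$ (from $\E\psi^2(X_1,X_2)\le M_*^{2/r}$, $\kappa^2\le\sigma^4M_*^{2/r}$ as in (\ref{A3.5}), and $\sigma^2\ge A_*$), and $\delta_3^2\le c_*$ by (\ref{A3.5}). Doubling and subtracting the $O(N^{-3})$ mean term yields (\ref{A3.6}).

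\emph{Proof of (\ref{A3.7}).} Apply Rosenthal's inequality in the $x$-variable to the i.i.d.\ centered real variables $\{W_j(x)\}_{j\in O_k}$ and integrate against $P_X$:
\[
\E\Big\|\sum_{j\in O_k}W_j\Big\|_r^r\le c(r)\Big(M^{r/2}\!\int_{\X}\!(\E W_1(x)^2)^{r/2}P_X(dx)+M\,\E\|W_1\|_r^r\Big)\le 2c(r)\,M^{r/2}\,\E\|W_1\|_r^r,
\]
the last step by Jensen ($(\E W_1(x)^2)^{r/2}\le\E|W_1(x)|^r$) and $M\le M^{r/2}$ for $r\ge2$. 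Then $\E\|W_1\|_r^r\le 2^r\E\|\psi(\cdot,Y)\|_r^r$, and $\E\|\psi(\cdot,Y)\|_r^r\le c_*\E|\psi(X_1,X_2)|^r\le c_*$ by (\ref{3.19}) (or directly $q_N^{-1}\le c_*$) and (\ref{1.3}); hence $\E\|\sum_{j\in O_k}W_j\|_r^r\le c_*M^{r/2}$ and $\E\|U_k-\overline U_k\|_r^r\le c_*(M/N)^{r/2}$.

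\emph{Main obstacle.} There is no real depth here; the work is bookkeeping. The delicate points are: using the two orthogonality relations (\ref{A3.1})--(\ref{A3.2}) to obtain the \emph{exact} identity $\E\|\psi^*(\cdot,X_2)\|_2^2=\delta_3^2+\sigma^2\|b^*\|_2^2$ and the exact cancellation of all cross terms (which needs $Y_j$ and $\overline Y_j$ to be genuinely identically distributed, not just close); and matching the truncation tail — which naturally produces $N^{-\alpha(r-2)}$ — to the weaker claimed error $N^{-\alpha(r-4)}$ while keeping the $q_N^{-1}$ factors under control through (\ref{3.16})--(\ref{3.17}).
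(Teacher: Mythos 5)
Your proof is correct, and its skeleton is the same as the paper's: both reduce, using that $|O_k|=M$ and that the summands coming from different indices are independent with the same law as their copies, to one-summand quantities, namely $H_1=\E\|\psi^*(\cdot,Y)\|_2^2$ and a cross/mean term which in the paper appears as $H_2=\E\langle\psi^*(\cdot,Y_i),\psi^*(\cdot,Y_j)\rangle$, $i\ne j$, and in your write-up as $\|\E\psi^*(\cdot,Y)\|_2^2$ (the same number), and then both do the truncation bookkeeping with (\ref{3.15})--(\ref{3.17}). The differences are in the execution, and they are worth recording. For $H_1$ the paper expands $\psi^*(\cdot,Y_j)=\psi^{**}(\cdot,Y_j)+g(Y_j)b^*$ \emph{under the truncated law} and bounds the three pieces $V_1,V_2,V_3$ separately; its $N^{-\alpha(r-4)}$ loss comes from the cross piece $V_3$, controlled via $|abc|\le a^4+b^4+c^2$, which leaves only $r-4$ spare powers of $\|Z'\|_r$ for the truncation indicator. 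You instead establish the exact untruncated identity $\E\|\psi^*(\cdot,X_2)\|_2^2=\delta_3^2+\sigma^2\|b^*\|_2^2$ — the cross term does vanish, since by the symmetry of $\psi^{**}$ the relation (\ref{A3.2}) is equivalent to $\E(\psi^{**}(X_1,X_2)g(X_2)|X_1)=0$ a.s., which kills $\E\,g(X_2)\langle\psi^{**}(\cdot,X_2),b^*\rangle$ — and then absorb the truncation in a single tail estimate; this yields the sharper error $N^{-\alpha(r-2)}$, which of course implies the stated $N^{-\alpha(r-4)}$ bound, and your direct estimate $\|\E\psi^*(\cdot,Y)\|_2\le c_*N^{-3/2}$ likewise improves on the paper's bound $H_2\le c_*N^{-\alpha(r-2)}$ obtained from its $Q_1,Q_2,Q_3$ decomposition. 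For (\ref{A3.7}) the paper invokes the $L^r$-valued moment inequality (\ref{A3.19}), derived from Hoffmann--J{\o}rgensen together with the type-$2$ property of $L^r$, whereas you apply the scalar Rosenthal inequality pointwise in $x$ to the centered i.i.d.\ variables $W_j(x)$ and integrate in $P_X$; this more elementary route works precisely because the $L^r$ norm raised to the power $r$ is an integral, and after Lyapunov's inequality it gives the same $c_*M^{r/2}$ bound. All the auxiliary facts you use ($\E\psi^*(\cdot,X_2)=0$ from the degeneracy of $\psi$ and $\E g=0$, $\|\psi^*(\cdot,X_2)\|_2\le c_g\|Z'\|_r$ from (\ref{3.5}), (\ref{3.7}), $\alpha(r-1)\ge 3/2$, $\sigma^2\le 1$ and $\|b^*\|_2\le\|b\|_2\le c_*$, $q_N^{-1}\le c_*$) check out, so the argument is complete.
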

\begin{proof}
Let us prove (\ref{A3.6}).
By symmetry, we have, for $i,j\in O_1$,
\begin{align*}
&
\E\|U_1^*-{\overline U}_1^*\|_2^2
=
2\frac{M}{N}H_1-2\frac{M}{N}H_2,
\\
&
H_1
:=
\E\|\psi^*(\cdot, Y_j)\|_2^2,
\qquad
H_2
:=
\E\langle\psi^*(\cdot, Y_j),\psi^*(\cdot, Y_i)\rangle,
\quad
i\not=j.
\end{align*}

The inequality (\ref{A3.6}) follows from the inequalities
\begin{eqnarray}\label{A3.8}
&&
\delta_3^2-c_*N^{-\alpha(r-4)}
\le
H_1
\le
\delta_3^2+c_*,
\\
\label{A3.9}
&&
H_2
\le
 c_*N^{-\alpha(r-2)}.
\end{eqnarray}
Let us prove (\ref{A3.8}). From (\ref{A3.3}) we have
$H_1=V_1+V_2+2V_3$, where
$$
V_1=
\E\|\psi^{**}(\cdot, Y_j)\|_2^2,
\quad
V_2=\|b^*(\cdot)\|_2^2\E g^2(Y_j),
\quad
V_3
=
\E
g(Y_j)\langle\psi^{**}(\cdot,Y_j), b^*(\cdot)\rangle.
$$
Let us show that
\begin{equation}\label{A3.10}
\delta_3^2-c_*N^{-\alpha(r-2)}\le V_1\le \delta_3^2+c_*N^{-\alpha\,r}.
\end{equation}
This inequality follows from (\ref{3.16}), (\ref{3.17}) and the identity
$$
V_1
=
q_{N}^{-1}\E|\psi^{**}(X_1,X_j)|^2{\mathbb  I}_{A_j}
=
q_{N}^{-1}\E|\psi^{**}(X_1,X_j)|^2-q_{N}^{-1}V_1',
$$
where $V_1'=\E|\psi^{**}(X_1,X_j)|^2(1-{\mathbb  I}_{A_j})$
satisfies, by (\ref{3.15}),
\begin{equation}\label{A3.11}
0
\le
V_1'
\le
N^{-\alpha(r-2)}\E|\psi^{**}(X_1,X_j)|^2 \|Z_j'\|_r^{r-2}
\le
c_*N^{-\alpha(r-2)}.
\end{equation}
 In the last step we applied
H\"older's inequality and Lemma \ref{LA3.1} to get
$$
\E|\psi^{**}(X_1,X_j)|^2 \|Z_j'\|_r^{r-2}
\le
K_r^{2/r}(\E \|Z_j'\|_r^r)^{(r-2)/r}
\le
K_r^{2/r}K^{(r-2)/r}
\le
c_*.
$$

Let us show that
\begin{equation}\label{A3.12}
0\le V_2\le c_*.
\end{equation}
For
${\tilde b}(\cdot):=
\E\psi(\cdot,X_1)g(X_1)$ we have, by Cauchy-Schwartz,
$$
\|{\tilde b}(\cdot)\|_2^2
=
\E\bigl(\E(\psi(X,X_1)g(X_1)\bigl|\, X)\bigr)^2
\le
\E\psi^2(X,X_1)\, \sigma^2
\le
c_*\sigma^2.
$$
Now the identity $b^*=\sigma^{-2}p_g({\tilde b})$ implies
\begin{equation}\label{A3.13}
\|b^*\|_2^2
\le
\sigma^{-4}\|{\tilde b}\|_2^2
\le
\sigma^{-2}c_*.
\end{equation}
Invoking the bound $\E g^2(Y_j)\le c_*\sigma^2$, see (\ref{3.19}), we
obtain (\ref{A3.12}).

Finally, write
$$
V_3
=
q_N^{-1}\E {\tilde V}{\mathbb I}_{A_j},
\qquad
{\tilde V}=g(X_j)\psi^{**}(X_1,X_j)b^{*}(X_1).
$$
The identity (\ref{A3.2}) implies $\E {\tilde V}=0$.
Therefore
$V_3=q_N^{-1}\E{\tilde V}({\mathbb I}_{A_j}-1)$.
Invoking (\ref{3.15}) and using  $q_N^{-1}\le c_*$, see
(\ref{3.17}), we obtain
\begin{equation}\label{A3.14}
|V_3|
\le
c_*N^{-\alpha(r-4)}\E|{\tilde V}|\|Z_j'\|_r^{r-4}
\le c_*N^{-\alpha(r-4)}.
\end{equation}
In the last step we used the bound
$\E|{\tilde V}|\|Z_j'\|_r^{r-4}\le c_*$.
In order to prove this bound we invoke the inequalities
$$
|abc|
\le
(ab)^2+c^2
\le
a^4+b^4+c^2
$$
to show that
$$
|{\tilde V}|
\le
|g(X_j)|^4+|\psi^{**}(X_1,X_j)|^4+|b^*(X_1)|^2.
$$
Furthermore, by H\"older's inequality and (\ref{A3.5}),
$$
\E|g(X_j)|^4\|Z_j'\|_r^{r-4}\le c_*,
\qquad
\E|\psi^{**}(X_1,X_j)|^4\|Z_j'\|_r^{r-4}\le c_*.
$$
By the independence and (\ref{A3.13}),
$$
\E|b^*(X_1)|^2\|Z_j'\|_r^{r-4}
=
\|b^*\|_2^2\E\|Z_j'\|_r^{r-2}\le c_*.
$$
Thus we arrive at (\ref{A3.14}).
Combining (\ref{A3.10}), (\ref{A3.12}) and (\ref{A3.14}) we obtain (\ref{A3.8}).

Let us prove (\ref{A3.9}). Using (\ref{A3.3}) write
$H_2=Q_1+Q_2+2Q_3$, where
\begin{align*}
&
Q_1=\E\psi^{**}(X_1,Y_j)\psi^{**}(X_1,Y_i),
\qquad
Q_2=\|b^*\|_2^2\E g(Y_j)g(Y_i),
\\
&
Q_3=\E \psi^{**}(X_1,Y_j)g(Y_i)b^*(X_1).
\end{align*}

It follows from the identity (\ref{A3.1}) that
$$
Q_1
=
q_N^{-2}
\E
\psi^{**}(X_1,X_j)\psi^{**}(X_1,X_i)
({\mathbb I}_{A_j}-1)({\mathbb I}_{A_i}-1).
$$
The simple  inequality
$|\psi^{**}(X_1,X_j)\psi^{**}(X_1,X_i)|\le|\psi^{**}(X_1,X_j)|^2+|\psi^{**}(X_1,X_i)|^2$
 yields, by symmetry,
\begin{equation}\label{A3.15}
|Q_1|
\le
2q_N^{-2}\E|\psi^{**}(X_1,X_j)|^2(1-{\mathbb I}_{A_j})
\le
c_*N^{-\alpha(r-2)}.
\end{equation}
In the last step we applied (\ref{A3.11}) and $q_N^{-1}\le c_*$, see (\ref{3.16}).

Furthermore, using the identity $\E g(X_i)=0$ we obtain from
(\ref{3.15})
\begin{eqnarray}\label{A3.16}
|\E g(Y_i)|
&&
=
q_N^{-1}|\E  g(X_i)({\mathbb I}_{A_i}-1)|
\\
\nonumber
&&
\le
q_{N}^{-1}N^{-\alpha(r-1)}
\E|g(X_i)|\|Z_i\|_r^{r-1}\le c_*N^{-\alpha(r-1)}.
\end{eqnarray}
In the last step we applied H\"older's inequality to show
$\E|g(X_i)|\|Z_i\|_r^{r-1}\le c_*$.

The  bounds (\ref{A3.16}), (\ref{3.16})  and (\ref{A3.13}) together imply
\begin{equation}\label{A3.17}
|Q_k|\le c_*N^{-\alpha(r-1)},
\qquad
k=2,3.
\end{equation}
The bound (\ref{A3.9}) follows from (\ref{A3.15}) and (\ref{A3.17}).

Let us prove (\ref{A3.7}).
For this purpose we shall show  that
\begin{equation}\label{A3.18}
\E \big\|\sum_{j\in O_k}\frac{V_j}{\sqrt M}\big\|_r^r\le c_*,
\qquad
{\text{where}}
\qquad
V_j=\psi(\cdot,Y_j)-\psi(\cdot,{\overline Y}_j),
\end{equation}
and where ${\overline Y}_j$
 denote independent copies of $Y_j$, $j\in O_k$.
Using 
$$
\E\|\psi(\cdot,X_j)\|_r^r=\E|\psi(X_1,X_j)|^r\le c_*
$$ 
we obtain, by symmetry and
(\ref{3.19}),
$$
\E\|V_j\|_r^r
\le
2^r\E\|\psi(\cdot,Y_j)\|_r^r
\le
c_*\E\|\psi(\cdot, X_j)\|_r^r
\le
c_*.
$$
Now (\ref{A3.18}) follows from the well known inequality
\begin{equation}\label{A3.19}
\|\xi_1+\dots+\xi_k\|_r^r\le
c(r)\sum_{i=1}^k\E\|\xi_i\|_r^r+c(r)\bigl(\sum_{i=1}^k\E\|\xi_i\|_r^2\bigr)^{r/2},
\quad k=1,2,\dots
\end{equation}
which is valid for independent centered random elements $\xi_i$  with
values in $L^r$. One can derive this inequality from  Hoffmann --
Jorgensen's inequality (see e.g., Proposition 6.8 in  Ledoux and Talagrand (1991)
\cite{Ledoux_Talagrand_1991})
using the type $2$ property of the Banach space $L^r$ and
 the
symmetrization lemma (see formula (9.8) and Lemma 6.3 ibidem).
The proof of the lemma is complete.
\end{proof}

\smallskip
Before formulating and proving Lemma \ref{LA3.3} we introduce some more
notation.
Let ${\B}(L^r)$ denote the class of Borel sets of $L^r$.
  Consider
the regular conditional probability $P_k:{\mathbb R}\times {\B}(L^r)\to [0,1]$,
defined, for $z_k\in {\mathbb R}$ and $B\in {\B}(L^r)$,
$$
P_k(z_k;B):=\P\bigl(U_k\in B\,\bigr|g_k=z_k\bigr)=\E(\II_{U_k\in B}|g_k=z_k).
$$
Recall, see (\ref{3.54}), that $\psi_k$ denotes a $L^r$ valued random variable with the distribution
$\P\{\psi_k\in B\}=P_k(z_k;B)$.
Note that the $L^r$ valued random variable $\psi_k^*=p_g(\psi_k)$ has distribution
\begin{eqnarray}\nonumber
\P\{\psi_k^*\in B\}
&&
=
\P\{p_g(\psi_k)\in B\}
=
\P\{\psi_k\in p_g^{-1}(B)\}
\\
\label{A3.20}
&&
=
\P\bigl(U_k\in p_g^{-1}(B)\,\bigr|g_k=z_k\bigr)
=
\P\bigl(U_k^*\in B\,\bigr|\,g_k=z_k\bigr).
\end{eqnarray}
Furthermore,
using (\ref{A3.4}) we  write (\ref{A3.20}) in
the  form
$$
\P\{\psi_k^*\in B\}
=
\P
\Bigl(
U_k^{**}
+
(z_k-\frac{1}{N}\frac{\xi_k}{n^{1/2}})\,b^*
\in B
\,
\Bigr|
\,
g_k=z_k
\Bigr).
$$

Let ${\overline \psi}_k$ respectively ${\overline \psi}^*_k$   denote an
independent copy of $\psi_k$ respectively $\psi^*_k$.
Denote 
$$\tau_N=M^{-(r-4)/2}+N^{-\alpha(r-2)}M.$$
\smallskip

\begin{lem}\label{LA3.3} Let $k=1,\dots, n-1$. Let $|z_k|\le w\,n^{-1/2}$.
There exist positive constants $c_*^{(i)}$, $i=0,1,2,3$,
which depend on $w, r,\nu_1,\nu_2, \delta, A_*,D_*,M_*$ only such that
for
\begin{equation}\label{A3.21}
\tau_N\le c_*^{(0)}\delta_3^2,
\end{equation}
we have
\begin{eqnarray}\label{A3.22}
&&
c_*^{(1)}\delta_3^2
\le
n\E\|\psi_k^*-{\overline \psi}_k^*\|_2^2
\le c_*^{(2)}\delta_3^2
\\
\label{A3.23}
&&
\E\|\psi_k-{\overline \psi}_k\|_r^r
\le
c_*^{(3)}n^{-r/2}.
\end{eqnarray}
\end{lem}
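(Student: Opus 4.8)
Lemma~\ref{LA3.3} asserts that the conditional (given $g_k=z_k$, with $|z_k|\le wn^{-1/2}$) symmetrised second and $r$-th moments of $\psi_k^*$, respectively $\psi_k$, behave like the unconditional ones computed in Lemma~\ref{LA3.2}. The plan is to pass from conditional moments of $\psi_k^*$ and $\psi_k$ to the already-established unconditional bounds \eqref{A3.6}, \eqref{A3.7} by exploiting that the conditioning event $\{g_k=z_k\}$ has a density bounded above and below (Lemma~\ref{LA2.1}), together with the decomposition \eqref{A3.4}, $U_k^*=U_k^{**}+L_k b^*$, which separates out the part of $U_k^*$ that is (essentially) a function of $g_k$ alone.

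Here is the order I would carry things out. First, using \eqref{A3.20} and \eqref{A3.4} write $\psi_k^*-{\overline\psi}_k^*$ as $(U_k^{**}-{\overline U}_k^{**}) + (L_k-{\overline L}_k)b^*$ in distribution, conditionally on $g_k=z_k$; note that conditioning on $g_k=z_k$ fixes $L_k$ up to the small independent Gaussian perturbation $\xi_k/(N\sqrt n)$ from \eqref{3.46}, so the second term contributes only $O(N^{-2})$ to the $L^2$ norm and is negligible against $\delta_3^2 n^{-1}\ge N^{-8\nu}n^{-1}$ under \eqref{3.59}, \eqref{A3.21}. Thus the conditional second moment is governed by $\E(\|U_k^{**}-{\overline U}_k^{**}\|_2^2\mid g_k=z_k)$. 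Second, I would remove the conditioning: by the independence structure, $U_k^{**}$ conditionally on $g_k=z_k$ has distribution $P_k(z_k;\cdot)$ pushed through $p_g$; since $g_k$ has a density $p_k$ with $c_*'\le p_k(z_k)\le c_*$ on $|z_k|\le 2w n^{-1/2}$ (Lemma~\ref{LA2.1}, after the rescaling $g_k^*=(N/M)^{1/2}g_k$ and noting $|z_k|\le wn^{-1/2}$ corresponds to the admissible range), the conditional law is absolutely continuous with respect to a reference measure with density ratio bounded above by $c_*$, which gives a two-sided comparison of the conditional $L^2$ and $L^r$ moments of $U_k^{**}$ (resp.\ $U_k$) with the unconditional ones up to constants. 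Feeding in \eqref{A3.6}, \eqref{A3.7} and the fact (from \eqref{A3.8}--\eqref{A3.10}) that $\E\|U_k^{**}-{\overline U}_k^{**}\|_2^2 = 2(M/N)(V_1+O(N^{-\alpha(r-2)}))$ with $V_1=\delta_3^2+O(N^{-\alpha(r-2)})$, we get $n\E(\|\psi_k^*-{\overline\psi}_k^*\|_2^2\mid g_k=z_k)$ between $c_*^{(1)}\delta_3^2$ and $c_*^{(2)}\delta_3^2$ provided the error terms $\tau_N=M^{-(r-4)/2}+N^{-\alpha(r-2)}M$ are dominated by $\delta_3^2$, which is exactly hypothesis \eqref{A3.21}. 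Third, \eqref{A3.23} follows the same way but more cheaply: the conditional $r$-th moment of $\psi_k-{\overline\psi}_k$ is, up to the density-ratio constant, bounded by the unconditional one $\E\|U_k-{\overline U}_k\|_r^r\le c_*(M/N)^{r/2}=c_*n^{-r/2}$ from \eqref{A3.7}, and the contribution of the $(L_k-{\overline L}_k)b^*$ piece is again negligible.

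The main obstacle is the step ``remove the conditioning.'' One has to be careful that $U_k^{**}$ and $L_k$ (equivalently $g_k$) are not independent — indeed $U_k^*$ genuinely contains $L_k b^*$ — so the honest statement is that the pair $(U_k^{**},g_k)$ has a joint law whose conditional-on-$g_k$ slices are controlled by the marginal of $U_k^{**}$ only after one uses that $g_k$ has a bounded density and that the $L^2$-orthogonality \eqref{A3.2} makes $U_k^{**}$ and $b^*$ (hence the $g_k$-measurable part) contribute additively to second moments without cross terms. Concretely I would introduce the regular conditional probability $P_k(z_k;\cdot)$ as in the text preceding the lemma, use \eqref{A3.4} to write $\psi_k^* = \psi_k^{**} + (z_k - \xi_k/(N\sqrt n))b^*$ pointwise in $z_k$, and then bound $\E\|\psi_k^{**}-{\overline\psi}_k^{**}\|_2^2$ by comparing $\int \E\|\psi_k^{**}-{\overline\psi}_k^{**}\|_2^2\, p_k(z_k)\,dz_k = \E\|U_k^{**}-{\overline U}_k^{**}\|_2^2$ with the pointwise value, using $p_k\ge c_*'$ for the lower bound on a window around $z_k$ and $p_k\le c_*$ for the upper bound — this requires the integrand to be slowly varying in $z_k$ on the $n^{-1/2}$ scale, which one gets from the smoothness of $p_k$ (all derivatives bounded, \eqref{A2.1}) and the fact that shifting $z_k$ by $O(n^{-1/2})$ shifts $\psi_k^*$ by $O(n^{-1/2})\|b^*\|$, itself $O(n^{-1/2})$ in $L^2$ by \eqref{A3.13}, negligible against $\delta_3^2 n^{-1/2}\ge N^{-4\nu}n^{-1/2}$. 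Once this comparison is in place the rest is the bookkeeping of error terms already done in Lemmas~\ref{LA3.1} and~\ref{LA3.2}, so I would state those estimates and invoke \eqref{A3.8}, \eqref{A3.9}, \eqref{A3.21} to collect the final two-sided bounds.
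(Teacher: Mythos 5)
Your high-level plan (split off the $\xi_k$/$b^*$ contribution as negligible, reduce to the conditional moments of $U_k^{**}$ and $U_k$, and aim at the unconditional bounds of Lemma \ref{LA3.2}) matches the structure of the actual proof, but the central step --- what you call ``remove the conditioning'' --- has a genuine gap. The boundedness above and below of the density of $g_k$ at the point $z_k$ (Lemma \ref{LA2.1}) does \emph{not} give a bounded density ratio between the conditional law of $U_k^{**}$ given $g_k=z_k$ and its unconditional law: $U_k^{**}$ and $g_k$ are built from the same observations $Y_j$, and the orthogonality relations (\ref{A3.1}), (\ref{A3.2}) express uncorrelatedness, not independence, so conditioning on $g_k=z_k$ genuinely changes the law of $U_k^{**}$. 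Your fallback argument --- that the conditional second moment is slowly varying in $z_k$ because shifting $z_k$ by $O(n^{-1/2})$ shifts $\psi_k^*$ by $O(n^{-1/2})\,\|b^*\|_2$ --- only accounts for the explicit $z_k b^*$ term in (\ref{A3.4}) and ignores the $z_k$-dependence of the conditional law of $U_k^{**}$ itself, which is exactly the quantity that needs to be controlled. The paper's mechanism for this is the exact ratio formula for conditional expectations, (\ref{A3.56})--(\ref{A3.57}) of Lemma \ref{LA3.4}, which is available only because the smoothing variable $\xi_k$ added in (\ref{3.46}) gives the conditioning variable a smooth density; the conditional moments are then written as $p_0^{-1}(z_*)\,\E\bigl[\,\cdot\,p_1(z_*-g(Y_i)/\sqrt M)\bigr]$ etc., the densities $p_1,p_2$ are Taylor-expanded using (\ref{A2.1}), and the identities (\ref{A3.1}), (\ref{A3.2}) kill the low-order terms, leaving errors of size $\tau_N$.

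A second, related omission: after symmetrizing, $\E_*\|T_1\|_2^2=\E_*T_{11}+(M-1)\E_*T_{12}$, and the whole difficulty is the pair term. Unconditionally $\E T_{12}$ is tiny ((\ref{A3.9})), but conditioning on the sum $g_k$ induces correlations of order $1/M$ among the summands $\psi^{**}(\cdot,Y_i)$, which after multiplication by $M-1$ could a priori be of the same order as the main term $\delta_3^2$. Ruling this out requires the second-order expansion of $p_2\bigl(z_*-(g(Y_i)+g(Y_j))/\sqrt M\bigr)$ together with the truncation estimates (\ref{A3.52}), and this is precisely where the error $\tau_N=M^{-(r-4)/2}+N^{-\alpha(r-2)}M$ and hence the hypothesis (\ref{A3.21}) enter. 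A comparison of conditional with unconditional moments ``up to constants,'' even if it were valid, could not see this cancellation, so both the lower bound in (\ref{A3.22}) and the role of (\ref{A3.21}) are not actually established by your argument. The treatment of the negligible pieces ($\|\E{\hat\psi}^*\|_2^2\le c_*N^{-1}$, the $\eta b^*$ term, and the reduction of (\ref{A3.23}) to (\ref{A3.7}) via a bounded-density comparison with $p_3$) is in the right spirit, but those are the easy parts.
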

The condition (\ref{A3.21}) requires $N$ to be
large enough.  A simple calculation shows $\tau_N\le N^{-75\nu}$,
for $\nu$ satisfying (\ref{2.4}). Therefore, (\ref{3.59}) implies
$\tau_N\le N^{-65\nu}\delta_3^2$. In particular, under (\ref{3.59}) the inequality
(\ref{A3.21}) is satisfied provided that $N>c_*$,
where $c_*$ does not depend on $\delta_3^2$.
\smallskip

\begin{proof}
By ${\tilde c}_*, {\tilde c}_*'$ we
denote positive constants which depend only on
 $w, r,\nu_1,\nu_2,  \delta, A_*,D_*,M_*$. These constants can be different in different places
of the text.
Given $i,j\in O_k$, $i\not= j$, introduce random variables
\begin{align*}
&
g_*=\eta+\zeta,
\qquad
\eta=\frac{\xi_k}{R},
\quad
\zeta=\frac{1}{\sqrt M}\sum_{j\in O_k}g(Y_j),
\\
&
\zeta_i=\zeta-\frac{g(Y_i)}{\sqrt M},
\qquad
\zeta_{ij}=\zeta-\frac{g(Y_i)}{\sqrt M}-\frac{g(Y_j)}{\sqrt M}.
\end{align*}
Here $R=\sqrt{n\,M\,N}$ satisfies $N/2\le R\le N$, by the choice of $n$ and $M$.
Let $p$, $p_0$, $p_1$, and $p_2$ denote the densities of random variables
$\eta$, $\zeta+\eta$, $\zeta_i+\eta$, and $\zeta_{ij}+\eta$ respectively.

Note that $g_*=\sqrt{N/M}g_k$. Therefore, the condition
$g_k=z_k$ is equivalent to $g_*=z_*$, where $z_*=\sqrt{N/M}z_k$.
Furthermore, $|z_k|\le w\, n^{-1/2}\Leftrightarrow|z_*|\le w_*$, where
$w_*=w\sqrt{N/Mn}\le 2w$.

Given a random variable $Y$, we  denote
the conditional expectation
$\E(Y|g_*=z_*)=\E(Y|g_k=z_k)$ by $\E_*Y$.
 Similarly, for an event $A$,
we have $P(A|g_k=z_k)=P(A|g_*=z_*)$.

\bigskip
{\bf Proof of}  (\ref{A3.22}).
For the $L^r$ valued random variable ${\hat \psi}^*=\psi_k^*-z_kb^*$ we have
\begin{equation}\label{A3.24}
\P\{{\hat \psi}^*\in B\}=
P
\Bigl(
U_k^{**}
-
\frac{1}{N}\frac{\xi_k}{n^{1/2}}\,b^*
\in B
\,
\Bigr|
\, g_*=z_*
\Bigr).
\end{equation}
Note that for an independent copy ${\overline \psi}^*_k$ of $\psi^*_k$
the distributions of $\psi_k^*-{\overline \psi}^*_k$ and
${\hat\psi}^*-{\hat\psi}^*_c$ are the same. Here ${\hat\psi}^*_c$
denotes an independent copy of ${\hat \psi}^*$. Therefore,
\begin{equation}\label{A3.25}
\E\|\psi^*_k-{\overline \psi}^*_k\|_2^2
=
\E\|{\hat \psi}^*-{\hat\psi}^*_c\|_2^2
=
2\E\|{\hat\psi}^*\|_2^2-2\|\E {\hat\psi}^*\|_2^2.
\end{equation}
In order to prove (\ref{A3.22}) we show that
\begin{equation}\label{A3.26}
\|\E {\hat \psi}^*\|_2^2
\le
{\tilde c}_* N^{-1}
\end{equation}
and, for $\tau_N\le c_*^{(0)}\delta_3^2$ (i.e., for sufficiently large $N$),
\begin{equation}\label{A3.27}
{\tilde c}_*\delta_3^2
\le
n\E\|{\hat \psi}^*\|_2^2
\le
{\tilde c}_*'\delta_3^2.
\end{equation}
Since $N^{-1}n<\tau_N$, we can choose $c_*^{(0)}$ small enough such that
  the inequalities (\ref{A3.25}), (\ref{A3.26}) and
(\ref{A3.27}) together
imply (\ref{A3.22})

\bigskip

{\bf Proof of} (\ref{A3.26}).
Recall that an element $m=m(\cdot)\in L^2(P_X)$ is called mean of an $L^2(P_X)$ valued
random variable
${\hat \psi}^*={\hat \psi}^*(\cdot)$
if for every $f=f(\cdot)\in L^2(P_X)$
$$
\left< f,m \right>=\E\left< f, {\hat \psi}^*\right>.
$$
We shall show below that $\E\|{\hat \psi}^*\|_2^2<\infty$. Then,
by Fubini,
$$
\E \left<f,{\hat \psi}^*\right>
=
\int f(x)\E \,{\hat \psi}^*(x)P_X(dx).
$$
Therefore,
 $m(x)=\E {\hat \psi}^*(x)$, for $P_X$ almost all $x$.

For $f\in L^2(P_X)$ it follows from (\ref{A3.24}) that
\begin{eqnarray}\label{A3.28}
\E \left<f,{\hat \psi}^*\right>
&&
 =\E_*\left<f,
U_k^{**}
-
\frac{1}{N}\frac{\xi_k}{n^{1/2}}\,b^*\right>
\\
\nonumber
&&
=
\E_*\left<f,U_k^{**}\right>
-
\frac{\sqrt M}{\sqrt N} \left<f,b^*\right> \E_*\eta .
\end{eqnarray}
Fix $i\in O_k$. By symmetry,
\begin{equation}\label{A3.29}
\E_*\left<f,U_k^{**}\right>
=
\frac{M}{\sqrt N}\E_*\left<f,\psi^{**}(\cdot, Y_i)\right>
\end{equation}
An application of (\ref{A3.57}) yields
\begin{eqnarray}\nonumber
\E_*\left<f,\psi^{**}(\cdot,Y_i)\right>
&&
=
\frac{1}{p_0(z_*)}\E\left<f,\psi^{**}(\cdot,Y_i)\right>
p_1\bigl(
z_*-\frac{g(Y_i)}{\sqrt M}
\bigr)
\\
\label{A3.30}
&&
=
\left<f,a_{z_*}\right>,
\end{eqnarray}
where
$$
a_{z_*}(\cdot)=\frac{b_{z_*}(\cdot)}{p_0(z_*)},
\qquad
b_{z_*}(\cdot)
=
\E \psi^{**}(\cdot,Y_i) p_1\bigl(z_*-\frac{g(Y_i)}{\sqrt M}\bigr)
$$
are non-random elements of $L^r$.

It follows from (\ref{A3.28}), (\ref{A3.29}), (\ref{A3.30}) that
$$
m(\cdot)
=
\frac{M}{\sqrt N}a_{z_*}(\cdot)
-
\frac{\sqrt M}{\sqrt N}b^*(\cdot)\,\E_*\eta.
$$

In order to prove (\ref{A3.26}) we show that, for $|z_*|\le w_*$,
\begin{eqnarray}\label{A3.31}
&&
\|b_{z_*}\|_2
\le
c_*M^{-1},
\\
\label{A3.32}
&&
|\E_* \eta |
\le
{\tilde c}_*M^{-1/2}+{\tilde c}_* R^{-1/2},
\\
\label{A3.33}
&&
p_i(z_*)\ge {\tilde c}_*,
\qquad
i=0,1,2,
\end{eqnarray}
and apply (\ref{A3.13}).
Note that, by Lemma \ref{LA2.1}, there exist  positive
constants ${\tilde c}_*, {\tilde c}_*'$ such
that, for $M,N>{\tilde c}_*'$, the inequality (\ref{A3.33}) holds.

\smallskip
{\it Let us prove} (\ref{A3.31}).
In Lemma \ref{LA2.1} we show, for $i=1,2$, that  $p_i$ and its derivatives  are bounded
functions. That is,
\begin{equation}\label{A3.34}
|p_i|\le c_*,
\qquad
|p_i'|\le c_*,
\qquad
|p_i''|\le c_*,
\qquad
|p_i'''|\le c_*,
\quad i=1,2.
\end{equation}
Expanding in powers of $M^{-1/2}g(Y_i)$
we obtain
\begin{equation}\label{A3.35}
p_1\bigl(z_*-\frac{g(Y_i)}{\sqrt M}\bigr)
=
p_1(z_*)-\frac{g(Y_i)}{\sqrt M}p_1'(z_*)+\frac{g^2(Y_i)}{M}\frac{p_1''(\theta)}{2}.
\end{equation}
It follows from the identities (\ref{A3.1}) and (\ref{A3.2}) that
  for $P_X$ almost all $x$
\begin{align*}
\E \psi^{**}(x,Y_i)
&
=
q_N^{-1} \E \psi^{**}(x,X_i)\II_{A_i}
\\
&
=
q_N^{-1}\E \psi^{**}(x,X_i)(\II_{A_i}-1)
\\
&
=:q_N^{-1}a_0(x)
\\
\E \psi^{**}(x,Y_i)g(Y_i)
&
=
 q_N^{-1} \E \psi^{**}(x,X_i)g(X_i)\II_{A_i}
\\
&
=
q_N^{-1}\E \psi^{**}(x,X_i)g(X_i)(\II_{A_i}-1)
\\
&
=:q_N^{-1}a_1(x).
\end{align*}
Using (\ref{A3.34}) and the inequality $q_N^{-1}\le c_*$, see (\ref{3.16}), we obtain from (\ref{A3.35})
$$
\|b_{z_*}(\cdot)\|_2
\le
c_*\|a_0(\cdot)\|_2
+
\frac{c_*}{\sqrt M}\|a_1(\cdot)\|_2
+
\frac{c_*}{M}\|a_2(\cdot)\|_2,
$$
where we denote
$a_2(\cdot)=\E \psi^{**}(\cdot,Y_i)g^2(Y_i)$.
In order to prove (\ref{A3.31}) we show that
\begin{equation}\label{A3.36}
\|a_0(\cdot)\|_2\le \frac{c_*}{N^{\alpha(r-1)}},
\qquad
\|a_1(\cdot)\|_2\le \frac{c_*}{N^{\alpha(r-2)}},
\qquad
\|a_2(\cdot)\|_2\le c_*.
\end{equation}

Let us prove (\ref{A3.36}). Invoking (\ref{3.15}) we  obtain, by H\"older's inequality,
\begin{equation}\label{A3.37}
|a_0(x)|\le \E|\psi^{**}(x,X_i)|\frac{\|Z_i'\|_r^{r-1}}{N^{\alpha(r-1)}}
\le w^{1/r}(x)\frac{K^{(r-1)/r}}{N^{\alpha(r-1)}},
\end{equation}
where we denote $w(x)=\E |\psi^{**}(x,X_i)|^r$. Furthermore, by Lyapunov's
inequality,
\begin{equation}\label{A3.38}
\|w^{1/r}(\cdot)\|_2^2=\E w^{2/r}(X)\le \bigl(\E w(X)\bigr)^{2/r}=K_r^{2/r}.
\end{equation}
Clearly, the first bound of (\ref{A3.36}) follows from (\ref{A3.37}), (\ref{A3.38}) and
(\ref{A3.5}). A similar argument shows the second bound of (\ref{A3.36}). We have
\begin{equation}\label{A3.39}
|a_1(x)|
\le
\E|\psi^{**}(x,X_i)g(X_i)|\frac{\|Z_i'\|_r^{r-2}}{N^{\alpha(r-2)}}
\le
w^{1/r}(x) \frac{V^{(r-1)/r}}{N^{\alpha(r-2)}},
\end{equation}
where we denote $V=\E \bigl( \|Z_i'\|_r^{r-2} |g(X_i)|\bigr)^{r/(r-1)}$.
By H\"older's inequality,
\begin{equation}\label{A3.40}
V
\le
\bigl(\E |g(X_i)|^r\bigl)^{1/(r-1)}\bigl(\E\|Z_i'\|_r^r\bigr)^{(r-2)/(r-1)}
\le
c_*.
\end{equation}
Clearly, (\ref{A3.38}), (\ref{A3.39}) and (\ref{A3.40}) imply the second bound of (\ref{A3.36}). The last bound of
(\ref{A3.36}) follows from (\ref{3.19}), by
Cauchy-Shwartz. Indeed, we have
$$
|a_2(x)|
\le
c_*\E|\psi^{**}(x,X_i)|g^2(X_i)
\le c_*
\bigl(\E|\psi^{**}(x,X_i)|^2\E g^4(X_i)\bigr)^{1/2}.
$$
Therefore, $\|a_2(\cdot)\|_2^2\le c_*K_2\E g^4(X_i)\le c_*$, by (\ref{A3.5}).

\smallskip
{\it Let us prove} (\ref{A3.32}).
We have, by (\ref{A3.56}),
$$
\E_*\eta=p_0^{-1}(z_*)\E(z_*-\zeta)p(z_*-\zeta).
$$
 In order to prove (\ref{A3.32}) it suffices to show in view of (\ref{A3.33}) that
\begin{equation}\label{A3.41}
|\E(z_*-\zeta)p(z_*-\zeta)|
\le
c_*R^{-1/2}+ c_*M^{-1/2}.
\end{equation}
Let ${\tilde p}$ denotes the density function  of $\xi_k$.
Then  $p(u)=R\,{\tilde p}(R\, u)$.
We have
$$
\E (z_*-\zeta)p(z_*-\zeta)
=
6 c_{\xi}\E
\frac{\sin^6 (R(z_*-\zeta)/6)}{\bigl(R\,(z_*-\zeta)/6\bigr)^5}.
$$
Therefore, denoting $H(z_*)=1+|R\,(z_*-\zeta)|^5$, we obtain
\begin{equation}\label{A3.42}
\E |(z_*-\zeta)p(z_*-\zeta)|
\le c
\E H^{-1}(z_*).
\end{equation}
On the event $|\zeta-z_*|\ge R^{-1/2}$ we have
$
H^{-1}(z_*)
\le
R^{-5/2}$.
Furthermore, a bound for the probability of the complementary event
$$
\P\{|\zeta-z_*|\le R^{-1/2}\}\le c_*R^{-1/2}+c_*M^{-1/2},
$$
follows by the Berry-Esseen bound applied to
the sum $\zeta$.
Therefore, $\E H^{-1}(z_*)$ is bounded by the right hand side of
(\ref{A3.41}). Now (\ref{A3.41}) follows from (\ref{A3.42}).

\bigskip

{\bf Proof of } (\ref{A3.27}).  Write
\begin{align*}
&
U_k^{**}-\frac{1}{N}\frac{\xi_k}{\sqrt n}b^*=\frac{\sqrt M}{\sqrt N}(T_1-T_2),
\\
&
T_1:= \frac{1}{\sqrt M}\sum_{j\in O_k}\psi^{**}(\cdot,Y_j),
\qquad
T_2:=\eta b^*.
\end{align*}

It follows from (\ref{A3.24}), by the inequality
$\|u+v\|_2^2\ge\|u\|_2^2/2-\|v\|_2^2$, for $u,v\in L^2(P_X)$, that
\begin{align*}
\E \|{\hat \psi}^*\|_2^2
&
=
\frac{M}{N}
\E_*\|T_1-T_2\|_2^2
\ge
\frac{M}{2N}\E_* \|T_1\|_2^2
-
\frac{M}{N}\E_*  \|T_2\|_2^2.
\end{align*}
We shall show
that
\begin{eqnarray}\label{A3.43}
&&
\E_* \|T_2\|_2^2
\le
p_0^{-1}(z_*)\bigl(c_*R^{-1}M^{-1/2}+c_*R^{-3/2}\bigr),
\\
\label{A3.44}
&&
\E_* \|T_1\|_2^2
\ge
p_0^{-1}(z_*)\bigl(p_1(z_*)\delta_3^2-c_*\tau_N\bigr).
\\
\label{A3.45}
&&
\E_* \|T_1\|_2^2
\le
p_0^{-1}(z_*)\bigl(p_1(z_*)\delta_3^2+c_*\tau_N\bigr).
\end{eqnarray}
The inequalities (\ref{A3.43}) and (\ref{A3.44}) imply the lower bound in (\ref{A3.27}).
Indeed, by (\ref{A3.33}), we have, for small $c_*^{(0)}$,
$$
\frac{c_*}{M^{1/2}R}+\frac{c_*}{R^{3/2}}
\le
c_*\tau_N
\le
c_*c_*^{(0)}\delta_3^2
\le
p_1(z_*)\delta_3^2/4.
$$
Similarly, the inequalities (\ref{A3.43}) and (\ref{A3.45}) imply the upper bound in (\ref{A3.27}).

{\it Proof of} (\ref{A3.43}). We have, by  (\ref{A3.56}),
$$
\E_* \eta^2
=
p_0^{-1}(z_*)W,
\qquad
W:=\E(z_*-\zeta)^2p(z_*-\zeta).
$$
Proceeding as in proof of (\ref{A3.41}), we obtain
$$
W
=
\frac{36}{R}c_{\xi}\E\frac{\sin^6(R(z_*-\zeta)/6)}{(R(z_*-\zeta)/6)^{4}}
\le
\frac{c}{R}\E {\tilde H}^{-1}(z_*),
$$
where ${\tilde H}(z_*)=1+|R(z_*-\zeta)|^{4}$ satisfies
$$
\E {\tilde H}^{-1}(z_*)\le c_*R^{-1/2}+c_*M^{-1/2}.
$$
Therefore,
$
W
\le
c_*R^{-3/2}+c_*R^{-1}M^{-1/2}$.
This inequality in combination with (\ref{A3.13}) implies (\ref{A3.43}).

{\it Proof of} (\ref{A3.44}). Fix $i,j\in O_k$, $i\not=j$. By symmetry,
\begin{eqnarray}\label{A3.46}
&&
\E_* \|T_1\|_2^2
=
\E_*T_{11}+(M-1)\E_*T_{12},
\\
\nonumber
&&
T_{11}=\|\psi^{**}(\cdot,Y_i)\|_2^2,
\qquad
T_{12}=\left<\psi^{**}(\cdot,Y_i),\,\psi^{**}(\cdot,Y_j)\right>.
\end{eqnarray}
We have, by (\ref{A3.57}),
\begin{align*}
&
\E_*T_{11}=
p_0^{-1}(z_*)H_1,
\qquad
\E_*T_{12}=
p_0^{-1}(z_*)H_2,
\qquad
\\
&
H_1=\E T_{11}p_1\bigl(z_*-\frac{g(Y_i)}{\sqrt M}\bigr),
\qquad
H_2=\E T_{12}p_2\bigl(z_*-\frac{g(Y_i)+g(Y_j)}{\sqrt M}\bigr).
\end{align*}
The inequality (\ref{A3.44}) follows from (\ref{A3.46}) and the bounds
\begin{eqnarray}\label{A3.47}
&&
H_1
\ge
p_1(z_*)\delta_3^2-c_*M^{-1/2},
\\
\label{A3.48}
&&
|H_2|
\le
c_*N^{-\alpha(r-2)}
+
c_*M^{-(r-2)/2}.
\quad
\end{eqnarray}

Let us prove (\ref{A3.47}). It follows from (\ref{A3.34}), by the mean value theorem, that
\begin{equation}\label{A3.49}
H_1
=
p_1(z_*)\E T_{11}
+
Q,
\qquad
|Q|\le c_*\E T_{11}\frac{|g(Y_i)|}{\sqrt M},
\end{equation}
where $|Q|\le c_*M^{-1/2}$.
Indeed, by (\ref{3.19}) and Cauchy-Schwartz,
$$
\E T_{11}|g(Y_i)|
\le
\E \|\psi^{**}(\cdot,X_i)\|_2^2|g(X_i)|
\le
K_4^{1/2}\sigma
\le
c_*.
$$
In the last step we applied (\ref{A3.5}).
Furthermore, the identity
\begin{align*}
&
\E T_{11}
=
q_N^{-1}\E|\psi^{**}(X,X_i)|^2{\mathbb I}_{A_i}
=\E|\psi^{**}(X,X_i)|^2-b_1-b_2,
\\
&
b_1=(1-q_N^{-1})\E|\psi^{**}(X,X_i)|^2,
\qquad
b_2=q_N^{-1}\E|\psi^{**}(X,X_i)|^2(1-{\mathbb I}_{A_i})
\end{align*}
combined with (\ref{3.15}), (\ref{3.16}) and (\ref{3.17}) yields $\E T_{11}\ge
\delta_3^2-c_*M^{-1/2}$. This bound together with (\ref{A3.49}) shows
(\ref{A3.47}).

Let us prove (\ref{A3.48}). Write $y_i=g(Y_i)$ and expand
$$
p_2\bigl(z_*-\frac{y_i+y_j}{\sqrt M}\bigr)
=
p_2(z_*)-p_2'(z_*)\frac{y_i+y_j}{\sqrt M}+\frac{p_2''(z_*)}{2}\frac{(y_i+y_j)^2}{M}
+
{\tilde Q}.
$$
From (\ref{A3.34}) it follows, for $2<r-2\le 3$, the bound
$$
|{\tilde Q}|\le c_*|y_i+y_j|^{r-2}/M^{(r-2)/2}.
$$
Furthermore, denote
\begin{align*}
&
h_1=\E T_{12},
\qquad
\qquad
\quad
h_2=\E T_{12}g(Y_i),
\\
&
h_3=\E T_{12}g^2(Y_i),
\qquad
h_4=\E T_{12}g(Y_i)g(Y_j).
\end{align*}
We obtain, by symmetry,
\begin{eqnarray}\label{A3.50}
&&
H_2=p_2(z_*)h_1-2\frac{p_2'(z_*)}{\sqrt M}h_2 +\frac{p_2''(z_*)}{M}(h_3+h_4)+
\E T_{12}{\tilde Q},
\\
\nonumber
&&
\E|T_{12}{\tilde Q}|\le c_*M^{-(r-2)/2}\E |g(Y_i)|^{r-2}|T_{12}|.
\end{eqnarray}
Denote
$$
{\tilde T}_{12}=q_N^{-2}\psi^{**}(X,X_i)\psi^{**}(X,X_j).
$$
It follows from (\ref{3.17}), by H\"olders inequality and (\ref{A3.5}), that
$$
\E |g(Y_i)|^{r-2}|T_{12}|
\le
\E|g(X_i)|^{r-2}|{\tilde T}_{12}|
\le
c_*.
$$
Therefore,
\begin{equation}\label{A3.51}
\E|T_{12}{\tilde Q}|\le c_*M^{-(r-2)/2}.
\end{equation}
Furthermore,
(\ref{A3.1}) and (\ref{A3.2}) imply
\begin{align*}
&
h_1=\E{\tilde T}_{12}(\II_{A_i}-1)(\II_{A_j}-1),
\qquad
h_2=\E{\tilde T}_{12}g(X_i)(\II_{A_i}-1)(\II_{A_j}-1),
\\
&
h_3=\E{\tilde T}_{12}g^2(X_i)\II_{A_i}(\II_{A_j}-1),
\qquad
h_4=\E{\tilde T}_{12}g(X_i)g(X_j)(\II_{A_i}-1)(\II_{A_j}-1).
\end{align*}
Invoking the inequalities $q_N^{-2}\le c_*$, see (\ref{3.16}),
 and $1-\II_{A_i}\le V_i^s$, $s>0$, where $V_i:= \|Z_i'\|_r/N^{\alpha}$, see (\ref{3.15}),
we obtain, by H\"older's inequality,
\begin{eqnarray}\label{A3.52}
&&
|h_1|
\le
\E|{\tilde T}_{12}|(V_iV_j)^{(r-2)/2}
\le
c_*N^{-\alpha(r-2)},
\\
\nonumber
&&
|h_2|
\le
\E|{\tilde T}_{12}g(X_i)|V_i^{(r-4)/2}V_j^{(r-2)/2}
\le
c_*N^{-\alpha(r-3)},
\\
\nonumber
&&
|h_3|
\le
\E|{\tilde T}_{12}|g^2(X_i)V_j^{r-4}
\le
c_*N^{-\alpha(r-4)},
\\
\nonumber
&&
|h_4|
\le
\E|{\tilde T}_{12}g(X_i)g(X_j)|(V_iV_j)^{(r-4)/2}
\le
c_*N^{-\alpha(r-4)}.
\end{eqnarray}
Combining (\ref{A3.50}), (\ref{A3.52}), (\ref{A3.51})  and using the simple inequalities
$$
\frac{1}{N^{\alpha(r-3)}M^{1/2}}\le \frac{1}{{\tilde N}},
\qquad
\frac{1}{N^{\alpha(r-4)}M}\le \frac{1}{{\tilde N}},
\qquad
{\tilde N}=\min\{N^{\alpha(r-2)}, M^{(r-2)/2}\}
$$
and the inequalities (\ref{A3.34}), we obtain (\ref{A3.48}).

{\it Proof  of }(\ref{A3.45}). The inequality follows from (\ref{A3.46}), (\ref{A3.48})
and the inequality
\begin{align*}
H_1
&
\le
p_1(z_*)\E T_{11}+c_*M^{-1/2}
\le
p_1(z_*)\delta_3^2+c_*N^{-\alpha r}+c_*M^{-1/2}
\\
&
\le
p_1(z_*)\delta_3^2+c_*M^{-1/2},
\end{align*}
which is obtained in the same way as (\ref{A3.47}) above.

\bigskip
{\bf Proof of} (\ref{A3.23}). In order to prove (\ref{A3.23}) we shall show that
\begin{equation}\label{A3.53}
\E\|\psi_k\|_r^r\le {\tilde c}_*n^{-r/2}.
\end{equation}
Split $O_k=B\cup D$, where $B\cap D=\emptyset$ and $|B|=[M/2]$ and write
\begin{align*}
&
U_k=\frac{\sqrt M}{\sqrt N}(U_B+U_D),
\qquad
 U_B=\frac{1}{\sqrt M}\sum_{j\in B}\psi(\cdot,Y_j),
\\
&
\zeta=\zeta_B+\zeta_D,
\qquad
\zeta_B=\frac{1}{\sqrt M}\sum_{j\in B}g(Y_j).
\end{align*}
In particular, we have $g_*=\eta+\zeta_B+\zeta_D$.

The inequality
$$
 \E\|\psi_k\|_r^r
 =
 \E_*\|U_k\|_r^r
 \le
 2^r\Bigl(\frac{M}{N}\Bigr)^{r/2}\bigl(\E_*\|U_B\|_r^r+\E_*\|U_D\|_r^r\bigr)
 $$
combined with the bounds
\begin{equation}\label{A3.54}
 \E_*\|U_B\|_r^r\le c_*,
 \qquad
 \E_*\|U_D\|_r^r\le c_*
\end{equation}
 imply (\ref{A3.53}).
%
Let us prove the first bound of (\ref{A3.54}). By (\ref{A3.57}), we have
$$
\E_*\|U_B\|_r^r=p_0^{-1}(z_*)\E \|U_B\|_r^rp_3(z_*-\zeta_B),
$$
where $p_3$ denotes the density of $\eta+\zeta_D$. Furthermore, invoking the
bound
\linebreak
$\sup_{x\in {\mathbb R}}|p_3(x)|\le c_*$,
(which is obtained using the same argument as in the proof of Lemma \ref{LA2.1}) and the inequality
(\ref{A3.33}), we obtain
$\E_*\|U_B\|_r^r\le {\tilde c}_*\E\|U_B\|_r^r$.
Finally, invoking the bound
\begin{equation}\label{A3.55}
\E\|U_B\|_r^r
\le
c_*\E\|\frac{1}{\sqrt M}\sum_{j\in B}\psi(\cdot, X_j)\|_r^r
\le c_*,
\end{equation}
see (\ref{3.19}) and (\ref{A3.19}),
we obtain the first bound of (\ref{A3.54}). The second bound is obtained in the same way.
This completes the proof of the lemma.
%
\end{proof}


\bigskip

We collect some facts about conditional
moments in a separate lemma.

\begin{lem}\label{LA3.4} Let $\eta$ and $\zeta$ be independent
 random variables. Assume that $\eta$ is real valued and  has a density, say
$x\to p(x)$.

(i) Assume that $\zeta$ is real valued.  Then the function
$$
x\to \E p(x-\zeta),
\qquad
x\in {\mathbb R},
$$
is a density of the distribution $P_{\eta+\zeta}$ of $\eta+\zeta$.
Let $w:{\mathbb R}\to {\mathbb R}$ be a  measurable function such that $\E|w(\eta)|<\infty$.
For $P_{\eta+\zeta}$
 almost all $x\in {\mathbb R}$, we have
\begin{equation}\label{A3.56}
\E\bigl(w(\eta)\,\bigr|\, \eta+\zeta=x\bigr)
=
\frac{\E w(x-\zeta)p(x-\zeta)}{\E p(x-\zeta)}.
\end{equation}

(ii) Assume that $\zeta$ takes values in a measurable space, say $\Y$.
Assume that $u,v:\Y\to {\mathbb R}$ are measurable functions
and denote $P_{\eta+u(\zeta)}$ the distribution of $\eta+u(\zeta)$.
If $\E|v(\zeta)|<\infty$, then for $P_{\eta+u(\zeta)}$ almost all $x\in {\mathbb R}$,
\begin{equation}\label{A3.57}
\E\bigl(v(\zeta)\,\bigr|\, \eta+u(\zeta)=x\bigr)
=
\frac{\E v(\zeta)p\bigl(x-u(\zeta)\bigr)}{\E p\bigl(x-u(\zeta)\bigr)}.
\end{equation}
\end{lem}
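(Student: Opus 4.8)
The plan is to derive both identities straight from the definition of conditional expectation, the only analytic ingredient being Fubini--Tonelli to interchange integration in $\eta$ and $\zeta$. First I would treat (i). By independence, conditioning on $\zeta$ and then using the substitution $x=y+\zeta$, for every bounded measurable $f:\RR\to\RR$ one has
\[
\E f(\eta+\zeta)=\E\!\int_\RR f(y+\zeta)\,p(y)\,dy
=\int_\RR f(x)\,\E p(x-\zeta)\,dx ,
\]
where the interchange of $\E$ and $\int$ is legitimate because, splitting $f$ into positive and negative parts, $p\ge 0$ and $\int_\RR \E p(x-\zeta)\,dx=\E\int_\RR p(x-\zeta)\,dx=1$. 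Hence $q(x):=\E p(x-\zeta)$ is a density of $P_{\eta+\zeta}$, which is the first assertion of (i).

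Next set $h(x)=q(x)^{-1}\,\E\bigl[w(x-\zeta)p(x-\zeta)\bigr]$ on $\{q>0\}$ and $h=0$ elsewhere. Since $\int_\RR \E\bigl[|w(x-\zeta)|\,p(x-\zeta)\bigr]\,dx=\E|w(\eta)|<\infty$, the numerator is finite for Lebesgue-a.a.\ $x$, so $h$ is well defined $P_{\eta+\zeta}$-a.e. To identify $h$ with the conditional expectation it suffices to verify $\E\bigl(w(\eta)\II_{\{\eta+\zeta\in B\}}\bigr)=\E\bigl(h(\eta+\zeta)\II_{\{\eta+\zeta\in B\}}\bigr)$ for every Borel $B$. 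The left-hand side equals, by the same change of variables and Fubini (now justified by the integrability just noted),
\[
\E\!\int_B w(x-\zeta)p(x-\zeta)\,dx=\int_B \E\bigl[w(x-\zeta)p(x-\zeta)\bigr]\,dx ,
\]
while the right-hand side is $\int_B h(x)q(x)\,dx$, which by the definition of $h$ is the same integral; this gives (\ref{A3.56}). Statement (ii) would follow by the identical device, now conditioning on $\zeta$ rather than eliminating it: independence gives that the conditional law of $\eta+u(\zeta)$ given $\zeta$ has density $x\mapsto p(x-u(\zeta))$, whence $x\mapsto\E p(x-u(\zeta))$ is a density of $P_{\eta+u(\zeta)}$, and repeating the computation above with $w(\eta)$ replaced by $v(\zeta)$ and $p(x-\zeta)$ by $p(x-u(\zeta))$ --- the hypothesis $\E|v(\zeta)|<\infty$ supplying the integrability for Fubini --- yields $\E\bigl(v(\zeta)\II_{\{\eta+u(\zeta)\in B\}}\bigr)=\int_B \E\bigl[v(\zeta)p(x-u(\zeta))\bigr]\,dx$, and dividing by the density produces (\ref{A3.57}).

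I do not expect a genuine obstacle here; this lemma merely records the elementary conditioning formula used repeatedly in Appendix~3. The only points needing attention are the Fubini/Tonelli justifications (nonnegativity of $p$ for the two density claims, and finiteness of $\E|w(\eta)|$, resp.\ $\E|v(\zeta)|$, for the conditional-expectation formulas) and the remark that $\{q=0\}$ is $P_{\eta+\zeta}$-null, resp.\ $\{\E p(x-u(\zeta))=0\}$ is $P_{\eta+u(\zeta)}$-null, so the quotients are defined a.e.\ with respect to the relevant distribution and the choice of value on the exceptional set is immaterial.
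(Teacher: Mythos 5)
Your argument is correct and complete: the identification of $\E p(x-\zeta)$ (resp.\ $\E p(x-u(\zeta))$) as a density via independence and Tonelli, and the verification of the defining property of conditional expectation against Borel sets $B$, with Fubini justified by $\E|w(\eta)|<\infty$ (resp.\ $\E|v(\zeta)|<\infty$), is exactly what is needed; your remark that the numerator vanishes a.e.\ on the set where the denominator vanishes closes the only small measure-theoretic gap one could worry about. The paper itself states Lemma \ref{LA3.4} without proof, treating it as a standard fact used throughout Appendix~3, so your write-up simply supplies the routine verification the authors leave implicit; there is nothing to compare beyond that.
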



\section{Appendix 4}

In the next lemma we consider independent and identically distributed
random vectors $(\xi,\,\eta)$ and $(\xi',\, \eta')$ with values in
${\mathbb R}^2$ and the symmetrization $(\xi_s,\eta_s)$ where $\xi_s=\xi-\xi'$
and  $\eta_s=\eta-\eta'$.
Note that in the main text we  apply this
lemma to $\xi=g(X_1)$ and $\eta=N^{-1/2}\sum_{j=m+1}^N\psi(X_1,Y_j)$.

\begin{lem}\label{LA4.1}
Let $0<\nu<1/2$ and $r>2$. Assume that $\E|\xi|^r+\E|\eta|^r<\infty$.
The following statements hold.

{\bf a)} For $c_r=(7/12)2^{-r}$ the conditions
$$
|t|^{r-2}\E|\xi_s|^r\le c_r\E \xi_s^2,
\qquad
\E \xi_s\eta_s=0,
\qquad
\E|\eta_s|^r\le c_r\E\eta_s^2
$$
imply
$1-|\E \exp\{i(t\xi+\eta)\}|^2\ge 6^{-1}(t^2\E\xi_s^2+\E \eta_s^2)$.


{\bf b)}
Assume that for some  ${\tilde c}_1, {\tilde c}_2 >0$
we have
\begin{equation}\label{A4.1}
\E \xi_s^2/12-N^{-1}\E \eta_s^2>{\tilde c}_1^2
\quad
{\text{and}}
\quad
c_r\E\xi_s^2/\E|\xi_s|^r\ge {\tilde c}_2^{r-2}.
\end{equation}
Let $\e>0$ be such that
\begin{equation}\label{A4.2}
\e<1/6{\tilde c}_3,
\qquad
\e^{(r-2)/2}<\sigma_z^2/4A,
\qquad
\e^{r-2}<\sigma_z^2/4B,
\end{equation}
where ${\tilde c_3}=2+(5/{\tilde c}_1)^2\sigma_z^2$ and where the numbers $A,B$ are
defined in (\ref{A4.13}). Here
$\sigma_z^2=\E(\xi_s+N^{-1/2}\eta_s)^2$.
Assume that for some $0<\delta<{\tilde c}_2$
and
$\delta'>10\e^2$,
\begin{equation}\label{A4.3}
\sup_{\delta<|t|<N^{-\nu+1/2}}|\E e^{it\xi_s}|\le 1-\delta'
\qquad
{\text and}
\qquad
\E|\eta_s|\le \delta' N^{\nu}/2.
\end{equation}
Then for every $T^*$, satisfying
 $N^{1/2-\nu}\le |T^*|\le N^{\nu+1/2}$, the set
$$
I^*
=
\bigl\{
T^*\le t\le T^*+N^{1/2-\nu}: |\E e^{it(\xi+N^{-1/2}\eta)}|^2\ge 1-\e^2
\bigr\}
$$
is an interval of size  at most $5{\tilde c}_1^{-1} \e$.
\end{lem}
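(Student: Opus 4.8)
The statement has two essentially independent parts, and I would treat them separately. For part \textbf{a)} the plan is a straightforward truncation estimate. Write $|\E e^{i(t\xi+\eta)}|^2=\E e^{iZ_s}$ with $Z_s=t\xi_s+\eta_s$, so that the quantity to be bounded from below is $\E(1-\cos Z_s)$. The orthogonality hypothesis gives $\E Z_s^2=t^2\E\xi_s^2+\E\eta_s^2$, and the two $r$-th moment hypotheses combine — since $2^{r-1}c_r=7/24$ — into $\E|Z_s|^r\le 2^{r-1}\bigl(|t|^r\E|\xi_s|^r+\E|\eta_s|^r\bigr)\le\tfrac{7}{24}\E Z_s^2$. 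Truncating at $|Z_s|\le1$, where $1-\cos\theta\ge\tfrac{\theta^2}{2}-\tfrac{\theta^4}{24}\ge\tfrac{11}{24}\theta^2$, and estimating the discarded part by $\E\bigl(Z_s^2\,\II_{\{|Z_s|>1\}}\bigr)\le\E|Z_s|^r\le\tfrac{7}{24}\E Z_s^2$, one obtains $\E(1-\cos Z_s)\ge\tfrac{11}{24}\bigl(1-\tfrac{7}{24}\bigr)\E Z_s^2\ge\tfrac16\E Z_s^2$, which is the claim.

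For part \textbf{b)} introduce $W_0=\xi+N^{-1/2}\eta$, $W=\xi_s+N^{-1/2}\eta_s$, $\phi(t)=|\E e^{itW_0}|^2=\E\cos(tW)$ and $\psi(t)=1-\phi(t)\ge0$. I will use two elementary properties of $\psi$: the quadratic bound $\psi(v)\le\tfrac12\sigma_z^2 v^2$, from $1-\cos\theta\le\tfrac{\theta^2}{2}$; and the quasi-subadditivity $\psi(a+b)\le2\psi(a)+2\psi(b)$, which follows from $\cos((a+b)W)=\cos aW\cos bW-\sin aW\sin bW$ together with $1-uv\le(1-u)+(1-v)$ for $u,v\le1$ and $\sin^2\theta\le2(1-\cos\theta)$. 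The first step is to reduce everything to a single difference frequency: if $s<t$ both belong to $I^*$, then applying \eqref{A4.4} with the random variables $sW_0$ and $(t-s)W_0$ gives $\psi(t-s)\le2\psi(s)+2\psi(t)\le4\e^2$. Hence it suffices to show that any $u$ with $0\le u\le N^{1/2-\nu}$ and $\psi(u)\le4\e^2$ satisfies $u\le5{\tilde c}_1^{-1}\e$; the size bound will then follow once $I^*$ is shown to be connected.

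To bound such a $u$, I would first rule out $u>\delta$: then $u\in(\delta,N^{-\nu+1/2})$, so \eqref{A4.3} gives $|\E e^{iu\xi_s}|\le1-\delta'$, whence $\phi(u)=\bigl|\E e^{iu\xi_s}e^{iuN^{-1/2}\eta_s}\bigr|\le|\E e^{iu\xi_s}|+uN^{-1/2}\E|\eta_s|\le1-\delta'+N^{-\nu}\E|\eta_s|\le1-\tfrac{\delta'}{2}$, using $u\le N^{1/2-\nu}$ and $\E|\eta_s|\le\delta'N^{\nu}/2$; since $\delta'>10\e^2$ this contradicts $\phi(u)\ge1-4\e^2$. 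So $u\le\delta<{\tilde c}_2$, and now a truncation at $|uW|\le1$ yields $\psi(u)\ge\tfrac{11}{24}u^2\,\E\bigl(W^2\II_{\{|uW|\le1\}}\bigr)$. Because $u\le{\tilde c}_2$, the second inequality of \eqref{A4.1} (which bounds $\E|\xi_s|^r$, hence $\E|W|^r$, in terms of $\sigma_z^2$ via the constants of \eqref{A4.13}) gives $\E\bigl(W^2\II_{\{|uW|>1\}}\bigr)\le u^{r-2}\E|W|^r\le\tfrac12\sigma_z^2$, while the first inequality of \eqref{A4.1} gives $\sigma_z^2\ge\tfrac12\E\xi_s^2-N^{-1}\E\eta_s^2>6{\tilde c}_1^2$. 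Therefore $\psi(u)\ge\tfrac{11}{48}\sigma_z^2 u^2\ge\tfrac{11}{8}{\tilde c}_1^2 u^2$, and $\psi(u)\le4\e^2$ forces $u\le5{\tilde c}_1^{-1}\e$.

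Finally I would prove that $I^*$ is connected, which is the delicate step. If it were not, there would be $t_1<r^*<t_2$ with $t_1,t_2\in I^*$ and $r^*$ an interior maximum of $\psi$ on $[t_1,t_2]$ with $\psi(r^*)>\e^2$, so that $\psi''(r^*)=\E\bigl(W^2\cos r^*W\bigr)\le0$. By the previous step $t_2-t_1\le5{\tilde c}_1^{-1}\e$, hence by quasi-subadditivity and the quadratic bound $\psi(r^*)\le2\psi(t_1)+2\psi(r^*-t_1)\le2\e^2+\sigma_z^2\bigl(5{\tilde c}_1^{-1}\e\bigr)^2={\tilde c}_3\e^2<\e/6$, the last inequality by \eqref{A4.2}. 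Then $\P\{1-\cos r^*W>\lambda\}\le\psi(r^*)/\lambda$ is tiny, and splitting the mass of $W$ into the part with $|W|$ large (controlled by $\E|W|^r$) and the part where $1-\cos r^*W$ exceeds a fixed threshold $\lambda$ (small probability), one gets $\E\bigl(W^2(1-\cos r^*W)\bigr)\le\tfrac12\sigma_z^2$, again provided $\e$ is small in the quantitative sense recorded in \eqref{A4.2} with the constants $A,B$ of \eqref{A4.13}. Hence $\psi''(r^*)=\sigma_z^2-\E\bigl(W^2(1-\cos r^*W)\bigr)\ge\tfrac12\sigma_z^2>0$, a contradiction; so $I^*$ is an interval, of length $\le5{\tilde c}_1^{-1}\e$ by the reduction above. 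The main obstacle is precisely this connectedness argument: a super-level set of a characteristic function need not be an interval, and the proof works only because such a spurious local maximum is forced to lie within $5{\tilde c}_1^{-1}\e$ of a point where $\psi$ is negligibly small, so that $\psi''$ there is still dominated by its value $\sigma_z^2$ at the origin.
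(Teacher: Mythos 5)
Your part a) is correct and essentially the paper's computation, and most of part b) is sound: the reduction to bounding a single difference frequency, the exclusion of $\delta<u<N^{-\nu+1/2}$ via \eqref{A4.3}, and the connectedness argument (second derivative $\psi''(r^*)=\sigma_z^2-\E\bigl(W^2(1-\cos r^*W)\bigr)$ at an interior maximum, with $\psi(r^*)\le{\tilde c}_3\e^2$ and a truncation at $|z|\le\lambda$, $\lambda^2=\sigma_z^2/\e$, as in \eqref{A4.2}) is in fact a slightly cleaner variant of the paper's Taylor expansion at a stationary point, and it does not even need the constant $B$ of \eqref{A4.13}. The genuine gap is in the regime $u\le\delta$: you truncate $W=\xi_s+N^{-1/2}\eta_s$ directly and need $\E\bigl(W^2\II_{\{|uW|>1\}}\bigr)\le u^{r-2}\E|W|^r\le\sigma_z^2/2$, citing the second inequality of \eqref{A4.1}. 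But \eqref{A4.1} controls only $\E|\xi_s|^r$ relative to $\E\xi_s^2$; it says nothing about $\E|W|^r$, which contains the term $N^{-r/2}\E|\eta_s|^r$, and no hypothesis of part b) bounds that quantity at scale $u\le\delta$: only $\E|\eta|^r<\infty$ is assumed, \eqref{A4.3} controls a first moment, and the constants $A,B$ of \eqref{A4.13} enter \eqref{A4.2} solely as constraints on $\e$, while $\delta$ and ${\tilde c}_2$ may be far larger than $\e$. So the claimed inequality $u^{r-2}\E|W|^r\le\sigma_z^2/2$, and hence the lower bound $\psi(u)\ge\frac{11}{48}\sigma_z^2u^2$, does not follow from the stated hypotheses.

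The repair is exactly the paper's route and is available with your own tools: apply your quasi-subadditivity inequality to the two random variables $u\xi_s$ and $uN^{-1/2}\eta_s$ (rather than to two frequencies of the same variable), which is \eqref{A4.4}, to get $\psi(u)\ge\frac12\bigl(1-\E\cos u\xi_s\bigr)-\bigl(1-\E\cos uN^{-1/2}\eta_s\bigr)$. For $u\le\delta<{\tilde c}_2$ the second inequality of \eqref{A4.1} is precisely the $r$-th moment condition needed to apply part a) (with $\eta\equiv0$), or your truncation argument, to $\xi_s$ alone, giving $1-\E\cos u\xi_s\ge u^2\E\xi_s^2/6$; the $\eta$-part is bounded crudely by $u^2N^{-1}\E\eta_s^2$. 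Hence $\psi(u)\ge u^2\bigl(\E\xi_s^2/12-N^{-1}\E\eta_s^2\bigr)>{\tilde c}_1^2u^2$ by the first inequality of \eqref{A4.1} — this difference is exactly why \eqref{A4.1} is stated in that form — and $\psi(u)\le4\e^2$ then yields $u\le2{\tilde c}_1^{-1}\e\le5{\tilde c}_1^{-1}\e$. With this substitution your argument goes through.
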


\begin{proof}


Proof  {\bf a)}.
Invoking the inequality $1-\cos x\ge x^2/2-x^2/24-|x|^r$ and using
the simple inequality $|a+b|^r\le 2^{r-1}(|a|^r+|b|^r)$ we obtain
\begin{align*}
1-|\E \exp\{i(t\xi+\eta)\}|^2
&
=
1-\E\cos(t\xi_s+\eta_s)
\\
&
\ge \frac{11}{24}\E(t\xi_s+\eta_s)^2
-
2^{r-1}(\E|t\xi_s|^r+\E|\eta_s|^r)
\\
&
\ge 6^{-1}(t^2\E\xi_s^2+\E \eta_s^2).
\end{align*}
In the last step we use the conditions {\bf a)}.

Proof  {\bf b)}. Introduce the function
$t\to \tau_t^*=1-|\E e^{it(\xi+N^{-1/2}\eta)}|^2$.
Assume that the set $I^*$ is non-empty and choose
$s,t\in I^*$, i.e., we have $\tau_t^*,\tau_s^*\le \e^2$.
Firstly we show that $|s-t|\le 5{\tilde c}_1^{-1} \e$, thus proving
the bound for the size of the set $I^*$.

The inequality $1-\cos(x+y)\ge (1-\cos x)/2-(1-\cos y)$ implies
\begin{equation}\label{A4.4}
1-|\E e^{i(X+Y)}|^2\ge 2^{-1}(1-|\E e^{iX}|^2)-(1-|\E e^{iY}|^2),
\end{equation}
for arbitrary random variables $X,Y$. Choosing
 ${\tilde Y}=t(\xi+N^{-1/2}\eta)$ and
${\tilde X}=(s-t)(\xi+N^{-1/2}\eta)$ shows
\begin{equation}\label{A4.5}
\tau_s^*\ge (1-|\E e^{i{\tilde X}}|^2)/2-\tau_t^*.
\end{equation}
Now we show that the inequality $|t-s|>5{\tilde c}_1^{-1}\e$
implies $1-|\E e^{i{\tilde X}}|^2>5\e^2$, thus, contradicting to our choice
$\tau_s^*,\tau_t^*<\e^2$ and (\ref{A4.5}).
In what follows
the cases of ``large'' and ``small'' values of $|t-s|$ are treated separately.

For $5{\tilde c}_1^{-1}\e<|t-s|\le \delta$ we shall apply (\ref{A4.4})
 to ${\tilde X}=X+Y$, where
$X=(s-t)\xi$ and $Y=(s-t)N^{-1/2}\eta$.
Note that the statement {\bf a)} implies
\begin{equation}\label{A4.6}
1-|\E e^{iX}|^2
\ge
(t-s)^2\E \xi_s^2/6.
\end{equation}
Indeed, in view of the second inequality of (\ref{A4.1}),
the conditions of {\bf a)} are satisfied for
$|t-s|\le\delta\le {\tilde c}_2$.
Furthermore, we have
\begin{equation}\label{A4.7}
0
\le
1-|\E e^{iY}|^2
=
1-\cos\bigl(N^{-1/2}(s-t)\eta_s\bigr)
\le
(s-t)^2N^{-1}\E \eta_s^2.
\end{equation}

Invoking the bounds (\ref{A4.6}) and (\ref{A4.7}) in (\ref{A4.4}) we obtain
$$
1-|\E e^{i{\tilde X}}|^2
\ge
(s-t)^2\E \xi_s^2/12
-
(s-t)^2N^{-1}\E \eta_s^2
\ge
{\tilde c}_1^2(s-t)^2\ge 25\e^2.
$$
In the last step we used (\ref{A4.1}).

For $\delta<|t-s|\le N^{-\nu+1/2}$ we
 expand in powers of $a=i(s-t)N^{-1/2}\eta_s$
to get
\begin{align*}
1-|\E e^{i{\tilde X}}|^2
&
=
1-\E\exp\{i(s-t)\xi_s+a\}
\ge
1-\E\exp\{i(s-t)\xi_s\}
-\E|a|
\\
&
\ge
\delta'-\E|(t-s)N^{-1/2}\eta_s|
\ge
\delta'-N^{-\nu}\E|\eta_s|
\\
&
\ge
 \delta'/2
\ge 5\e^2.
\end{align*}
In the last step we applied (\ref{A4.3}).

Let us prove that $I^*$ is indeed an interval.
Assume the contrary, i.e. there exist $s<u<t$ such that $s,t\in
I^*$ and $u\notin I^*$. In particular, $\tau_t^*\le \e^2<\tau_u^*$.
Clearly, we can choose $u$ to be a local
maximum (stationary) point of the function $t\to \tau_t^*$.
Denote
$$
z=\xi_s+N^{-1/2}\eta_s,
\qquad
\sigma_z^2=\E z^2.
$$
An
application of (\ref{A4.4}) to $Y'=(t-u)(\xi+N^{-1/2}\eta)$ and $X'=u(\xi+N^{-1/2}\eta)$ gives
$$
\tau_t^*\ge \tau_u^*/2-\bigl(1-\E e^{i(t-u)z}\bigr) =\tau_u^*/2-\bigl(1-\E \cos(t-u)z\bigr).
$$
Invoking the inequalities $\tau_t^*\le \e^2$ and $1-\cos(t-u)z\le (t-u)^2z^2/2$ we obtain
\begin{equation}\label{A4.8}
\tau_u^*\le 2\e^2+(t-u)^2\sigma_z^2\le \e^2{\tilde c}_3,
\qquad
{\tilde c}_3=2+(5/{\tilde c}_1)^2\sigma_z^2.
\end{equation}
Here we used  the bound $|t-u|\le |t-s|\le 5\e/{\tilde c}_1$
proved above.

Denoting
$y=(t-u)z$ we
 have $\tau_t^*=1-\E e^{iuz}e^{iy}$. Invoking the expansion
$e^{iy}=1+iy+(iy)^2/2+R'$, where $|R'|\le y^2/6+|y|^r$, we obtain
\begin{equation}\label{A4.9}
\tau_t^*=\tau_u^*-i\E ye^{iuz}+2^{-1}\E y^2e^{iuz}+R,
\qquad
|R|\le \E y^2/6+\E |y|^r=:R_0.
\end{equation}
For a stationary point $u$  we have
$0=\frac{\partial}{\partial t}\tau_t^*\bigl|_{t=u}=-i\E z e^{iuz}$.
Therefore, $\E ye^{iuz}=0$ and (\ref{A4.9}) implies
$$
\tau_t^*\ge \tau_u^*+2^{-1}(t-u)^2\E z^2e^{iuz}-R_0.
$$
Write the right hand side in the form $\tau_u^*+2^{-1}(t-u)^2R_1$, where
$$
R_1=\E z^2e^{iuz}-3^{-1}\sigma_z^2-2\E|z|^r|t-u|^{r-2}.
$$
Note that the inequality $R_1>0$ contradicts
to our assumption $\tau_t^*< \tau_u^*$. We complete the proof by showing that $R_1>0$.

Since the random variable $z$ is symmetric we have $\E z^2\sin uz=0$. Therefore,
\begin{equation}\label{A4.10}
\E z^2e^{iuz}=\E z^2\cos uz=\sigma_z^2-\E z^2(1-\cos uz).
\end{equation}
Given $\lambda>0$ split
\begin{eqnarray}
\nonumber
\E z^2(1-\cos uz)
&
=
&
\E z^2(1-\cos uz)\Bigl({\mathbb I}_{\{z^2<\lambda^2\}}+{\mathbb I}_{\{z^2\ge \lambda^2\}}\Bigr)
\\
\label{A4.11}
&
\le
&
\lambda^2\E(1-\cos uz)+2\E|z|^r\lambda^{2-r}.
\end{eqnarray}
In the last step we used Chebyshev's inequality.
Furthermore, invoking the inequality $\E (1-\cos uz)=\tau_u^*\le {\tilde c}_3\e^2$, see
(\ref{A4.8}), we obtain from (\ref{A4.10}) and (\ref{A4.11}) for $\lambda^2=\e^{-1}\sigma_z^2$
\begin{equation}\label{A4.12}
\E z^2e^{iuz}\ge \sigma_z^2-\e {\tilde c}_3\sigma_z^2 -\e^{(r-2)/2}2\E |z|^r \sigma_z^{2-r}.
\end{equation}
Finally, invoking the inequality $|t-u|\le|t-s|\le 5{\tilde c}_1^{-1}\e$ we obtain from (\ref{A4.12})
$$
R_1\ge \sigma_z^2(1-3^{-1}-\e {\tilde c}_3)-\e^{(r-2)/2}A-\e^{r-2}B,
$$
where for random variable $z=\xi_s+N^{-1/2}\eta_s$ we write
\begin{equation}\label{A4.13}
A=2\E|z|^r\sigma_z^{2-r}
\qquad
{\text{and}}
\qquad
B=2\E|z|^r(5/{\tilde c}_1)^{r-2}.
\end{equation}
Thus, for $\e$ satisfying (\ref{A4.2}) we have $R_1>0$.
\end{proof}

\section{Appendix 5}

Let $Z_1,\dots, Z_N$ be independent copies of the $L^r$ valued random
element $Z=\{x\to \psi(x,Y)\}$. Recall that almost surely
$\|Z\|\le N^{\alpha}$. Here $\|\cdot\|$ denotes the norm of the
Banach space $L^r$, where $r>4$ and $1/2>\alpha>0$. Write
$M_p=\E|\psi(X_1,X_2)|^p$.

\begin{lem}\label{LA5.1}
(i) Assume that $\|\E Z\|^2\le \E\|Z\|^2/N$. Then there exists a constant $c(r)>0$ such
that for $k\le N$ and $x>c(r)$
we have
\begin{equation}\label{A5.1}
\P\{\Vert Z_1+\dots+Z_k\Vert>k^{1/2} u\,x\}
\le
\exp\{-2^{-5}x^2(1+xN^{\alpha}/k^{1/2}u)^{-1}\}.
\end{equation}
Here $u^2=\E\|Z\|^2$.

(ii) The following inequalities hold
\begin{eqnarray}\label{A5.2}
&&
\|\E Z\|
\le
M_r/q_N N^{(r-1)\alpha}
\\
\label{A5.3}
&&
q_N^{-1}(M_2-M_rN^{-(r-2)\alpha})
\le
\E \|Z\|^2
\le
 q_N^{-1}(M_r^{2/r}+M_rN^{-(r-2)\alpha}).
\end{eqnarray}
\end{lem}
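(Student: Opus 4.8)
The two parts call for different arguments, so the plan is to dispatch part (ii) first by a centering-plus-truncation computation, and then to prove part (i) by combining a type-$2$ bound for $\E\Vert Z_1+\dots+Z_k\Vert$ with a Bernstein-type martingale tail bound for the scalar random variable $\Vert Z_1+\dots+Z_k\Vert$.

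For part (ii) I would start from the remark that, since $\psi=N^{3/2}g_2$ and $g_2$ satisfies the degeneracy property (\ref{2.2}), one has $\E\bigl(\psi(X_1,X_2)\,\big|\,X_2\bigr)=0$; by symmetry of $\psi$ this gives $\E_{X'}\psi(x,X')=0$ for $P_X$-almost all $x$, i.e.\ $\E\,\psi(\cdot,X')=0$ as an element of $L^r$. Recalling that $Z=\psi(\cdot,Y)$, where $Y$ is $X'$ conditioned on $\widetilde{\Omega}=\{\Vert\psi(\cdot,X')\Vert\le N^\alpha\}$ and $q_N=\P(\widetilde{\Omega})$, we then have $\E Z=q_N^{-1}\E\bigl[\psi(\cdot,X')\,\mathbb I_{\widetilde{\Omega}}\bigr]=-q_N^{-1}\E\bigl[\psi(\cdot,X')\,\mathbb I_{\widetilde{\Omega}^c}\bigr]$ and $\E\Vert Z\Vert^2=q_N^{-1}\E\bigl[\Vert\psi(\cdot,X')\Vert^2\,\mathbb I_{\widetilde{\Omega}}\bigr]$. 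On $\widetilde{\Omega}^c$ one has $\mathbb I_{\widetilde{\Omega}^c}\le N^{-p\alpha}\Vert\psi(\cdot,X')\Vert^p$ for every $p>0$ (cf.\ (\ref{3.15})); taking $p=r-1$ and using $\E\Vert\psi(\cdot,X')\Vert^r=M_r$ together with the triangle inequality in $L^r$ gives (\ref{A5.2}), while $p=r-2$ gives $\E\bigl[\Vert\psi(\cdot,X')\Vert^2\,\mathbb I_{\widetilde{\Omega}^c}\bigr]\le N^{-(r-2)\alpha}M_r$. Then (\ref{A5.3}) follows: the lower bound from this last estimate together with $\E\Vert\psi(\cdot,X')\Vert^2\ge\E\Vert\psi(\cdot,X')\Vert_2^2=M_2$ (because $\Vert\cdot\Vert_r\ge\Vert\cdot\Vert_2$ for the probability measure $P_X$), and the upper bound from $\mathbb I_{\widetilde{\Omega}}\le1$ and Lyapunov's inequality $\E\Vert\psi(\cdot,X')\Vert^2\le M_r^{2/r}$.

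For part (i), write $S_k=Z_1+\dots+Z_k$ and $u^2=\E\Vert Z\Vert^2$. I would first estimate $\E\Vert S_k\Vert$: since $L^r$ with $r\ge2$ is of type $2$ (as used in the proof of Lemma \ref{LA3.2}), $\E\Vert S_k-\E S_k\Vert^2\le c(r)\sum_{i\le k}\E\Vert Z_i-\E Z_i\Vert^2\le c(r)\,k\,u^2$, and the hypothesis $\Vert\E Z\Vert^2\le u^2/N$ with $k\le N$ gives $\Vert\E S_k\Vert=k\Vert\E Z\Vert\le k\,u\,N^{-1/2}\le k^{1/2}u$, whence $\E\Vert S_k\Vert\le c(r)\,k^{1/2}u$. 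Next, viewing $\Vert S_k\Vert$ as a function of the independent arguments $Z_1,\dots,Z_k$, form its Doob martingale $d_j=\E[\Vert S_k\Vert\mid Z_1,\dots,Z_j]-\E[\Vert S_k\Vert\mid Z_1,\dots,Z_{j-1}]$; resampling one coordinate changes $\Vert S_k\Vert$ by at most $\Vert Z_j\Vert+\E\Vert Z\Vert\le2N^\alpha$, so $|d_j|\le2N^\alpha$ and $\E[d_j^2\mid\mathcal F_{j-1}]\le4u^2$, hence $\sum_j\E[d_j^2\mid\mathcal F_{j-1}]\le4k\,u^2$. A Bernstein/Freedman-type exponential inequality then yields $\P\{\Vert S_k\Vert-\E\Vert S_k\Vert>t\}\le\exp\bigl(-c\,t^2/(k\,u^2+N^\alpha t)\bigr)$. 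Choosing $x>c(r)$ large enough that $\E\Vert S_k\Vert\le\tfrac12 x\,k^{1/2}u$ and applying this with $t=\tfrac12 x\,k^{1/2}u$, so that $t^2/(k\,u^2+N^\alpha t)=\tfrac14 x^2\bigl(1+\tfrac12 x\,N^\alpha/(k^{1/2}u)\bigr)^{-1}$, and absorbing the absolute constants into the factor $2^{-5}$ and into the requirement $x>c(r)$, one obtains (\ref{A5.1}).

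The main obstacle is obtaining the Bernstein \emph{shape} of the tail in (\ref{A5.1}): the crude bounded-differences (Azuma) estimate would only give a sub-Gaussian exponent with variance proxy $k\,N^{2\alpha}$, whereas (\ref{A5.1}) demands that the crossover between the Gaussian and the exponential regimes be governed by $N^\alpha/(k^{1/2}u)$. Securing this forces one to bound the \emph{conditional} second moments $\sum_j\E[d_j^2\mid\mathcal F_{j-1}]$ by $k\,u^2$ rather than by the uniform increment bound, and simultaneously to know that $\E\Vert S_k\Vert$ is of the smaller order $k^{1/2}u$ — which is exactly where the hypothesis $\Vert\E Z\Vert^2\le u^2/N$ and the type-$2$ property of $L^r$ enter. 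Everything else is routine.
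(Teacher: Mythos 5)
Your proposal is correct and follows essentially the same route as the paper. Part (ii) is the paper's computation almost verbatim: write $\E Z$ and $\E\|Z\|^2$ via the conditioning factor $q_N^{-1}$, bound the truncation defect by ${\mathbb I}_{{\tilde \Omega}^c}\le N^{-p\alpha}\|\psi(\cdot,X')\|_r^p$ with $p=r-1$ and $p=r-2$, and use H\"older/Lyapunov together with $\|\cdot\|_r\ge\|\cdot\|_2$; your derivation of (\ref{A5.2}) via the integral triangle inequality is a harmless shortcut of the paper's pointwise H\"older step. For part (i) the paper likewise first bounds $\E\|Z_1+\dots+Z_k\|$ by $c'(r)k^{1/2}u$ using the type-$2$ property of $L^r$ and the hypothesis $\|\E Z\|^2\le u^2/N$ (exactly as you do), and then, instead of running a Doob-martingale/Freedman argument by hand, it verifies the Bernstein-type moment condition $\E\|Z_1\|^L\le 2^{-1}L!\,u^2N^{\alpha(L-2)}$ and invokes Theorem 2.1 of Yurinskii (1976), which delivers the tail bound with the crossover governed by $N^{\alpha}/(k^{1/2}u)$ and, after taking $x>c(r)$ so that $x$ exceeds twice the normalized mean, yields (\ref{A5.1}) with the constant $2^{-5}$. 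Since Yurinskii's inequality is itself proved by the martingale decomposition of $\|Z_1+\dots+Z_k\|$ with conditional variance controlled by $u^2$ per step and increments bounded by $2N^{\alpha}$, your argument amounts to re-deriving the cited inequality; it is more self-contained, at the cost of having to track the constants explicitly to confirm that the exponent is at least $2^{-5}x^2(1+xN^{\alpha}/k^{1/2}u)^{-1}$, which your choice $t=\tfrac12 xk^{1/2}u$ indeed supports.
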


{\it Remark.} Assume that
$$
M_2\ge 2M_rN^{-(r-2)\alpha},
\qquad
M_r^2\le (q_N/2)M_2N^\varkappa,
\qquad
{\text{ where}}
\qquad
\varkappa=2(r-1)\alpha-1.
$$
Then (\ref{A5.2}) and (\ref{A5.3}) imply
the inequality
 $\|\E Z\|^2\le \E\|Z\|^2/N$. Note that $r\alpha>2$ implies $\varkappa>2$.
Furthermore, by (\ref{3.16}), the probability $q_N>1-M_rN^{-r\alpha}$.

{\it Proof.} We derive (i) from Yurinskii's (1976) inequality.
 Denote $\zeta_k=Z_1+\dots+Z_k$. Using the type$-2$ inequality for an $L^r$
valued  random variable $\zeta_k-\E \zeta_k$,
$$
\E\|\zeta_k-\E \zeta_k\|^2\le k {\tilde c}(r)\E\|Z_1-\E Z_1\|^2,
$$
and the inequality $\|Z_1-\E Z_1\|^2\le 2\|Z_1\|^2+2\|\E Z_1\|^2$, we obtain
$$
\E\|\zeta_k-\E \zeta_k\|
\le
\bigl(\E\|\zeta_k-\E \zeta_k\|^2\bigr)^{1/2}
\le
k^{1/2}c'(r)(u+\|\E Z_1\|).
$$
 We have
\begin{align*}
\E\|\zeta_k\|
&
\le
\E\|\zeta_k-\E \zeta_k\|+k\|\E Z_1\|
\\
&
\le
c'(r)k^{1/2}u+k(1+c'(r)k^{-1/2})\|\E Z_1\|=:\beta_k.
\end{align*}

It follows from the inequality $\|Z_1\|\le N^{\alpha}$ that
$$
\E \Vert Z_1\Vert^L\le 2^{-1}L!u^2N^{\alpha(L-2)},
\qquad
L=2,3,\dots .
$$
Write $B_k^2=ku^2$.
Theorem 2.1 of Yurinskii (1976) shows
\begin{equation}\label{A5.4}
\P\{\|\zeta_k\|\ge x B_k \}
\le
\exp\{-B\},
\quad
B=\frac{{\overline x}^2}{8}(1+({\overline x}N^\alpha/2B_k))^{-1}\},
\end{equation}
provided that ${\overline x}=x-\beta_k/B_k>0$.

Since $\beta_k/B_k\le 1+c'(r)(1+k^{-1/2})$ we have, for $x> c(r):=4c'(r)+2$,
$$
x>2\beta_k/B_k
\qquad
{\text{and}}
\qquad
x>{\overline x}>x/2.
$$
The latter inequality implies
$$
B\ge B':=
\frac{(x/2)^2}{8}(1+(xN^\alpha/B_k))^{-1}\}.
$$
Finally, replacing $B$ by $B'$ in  (\ref{A5.4}) we obtain (\ref{A5.1}).

Let us prove (ii).
The mean value $\E Z=\{x\to \E\psi(x,Y)\}$ is an element of $L^r$. For $P_X$ almost
all $x\in \cal X$
we have $\E \psi(x,X)=0$. Therefore,
$$
\E Z=q_N^{-1}\E\psi(x,X){\mathbb I}_A
=
q_N^{-1}\E\psi(x,X)({\mathbb I}_A-1).
$$
Invoking (\ref{3.15}) and using Chebyshev and H\"older inequalities, we
obtain, for $P_X$ almost all $x$,
$$
|\E Z|
\le
\frac{1}{q_N N^{\alpha(r-1)}}
\E\|Z'\|_r^{r-1}|\psi(x,X)|
\le
\frac{1}{q_N N^{\alpha (r-1)}}
(\E\|Z'\|_r^{r})^{(r-1)/r} a(x),
$$
where $a(x)=(\E|\psi(x,X)|^{r})^{1/r}$. Note that
$\E\|Z'\|_r^{r}=M_r$ and $\|a\|^r=M_r$. Finally,
$$
\|\E Z\|
\le
\|a\|M_r^{(r-1)/r}/q_N N^{\alpha(r-1)}=M_r/q_N N^{\alpha(r-1)}.
$$

Let us prove (\ref{A5.3}). Denote $b_p(x)=(\E_{X_1}|\psi(X_1,x)|^p)^{1/p}$. Here $\E_{X_1}$
denotes the conditional expectation given all the random variables, but $X_1$.
We have
\begin{equation}\label{A5.5}
\E\|Z\|^2
=
q_N^{-1}\E{\mathbb I}_A b_r^2(X)
=
q_N^{-1}\E b_r^2(X)
+
q_N^{-1}R,
\qquad
R=\E({\mathbb I}_A-1)b_r^2(X).
\end{equation}
By H\"older's inequality  $b_r(x)\ge b_2(x)$, for $P_X$ almost all
$x$. Therefore,
\begin{equation}\label{A5.6}
M_2=\E b_2^2(X)\le \E b_r^2(X)\le M_r^{2/r}
\end{equation}
Combining (\ref{A5.6}) and (\ref{A5.5}) and the bound  $|R|\le M_rN^{-(r-2)\alpha}$ we
obtain (\ref{A5.3}).
In order to  bound $|R|$ we use (\ref{3.15}),
$|R|
\le
N^{-(r-2)\alpha}\E\|Z'\|_r^{r-2}b_r^2(X)$ and apply H\"older inequality,
$$
\E\|Z'\|_r^{r-2}b_r^2(X)
\le
(\E\|Z'\|_r^r)^{(r-2)/r}(\E b_r^r(X))^{2/r}=M_r.
$$

\end{document}